\newcommand{\half}{\frac{1}{2}}
\newcommand{\R}{\mathbb{R}}
\newcommand{\ti}{\tilde}
\begin{document} 
\newtheorem{prop}{Proposition}[section]
\newtheorem{Def}{Definition}[section]
\newtheorem{theorem}{Theorem}[section]
\newtheorem{lemma}{Lemma}[section]
 \newtheorem{Cor}{Corollary}[section]

\title[Unconditional well-posedness for MKG]{\bf Unconditional well-posedness below energy norm for the Maxwell-Klein-Gordon system}
\author[Hartmut Pecher]{
{\bf Hartmut Pecher}\\
Fakult\"at f\"ur  Mathematik und Naturwissenschaften\\
Bergische Universit\"at Wuppertal\\
Gau{\ss}str.  20\\
42119 Wuppertal\\
Germany\\
e-mail {\tt pecher@math.uni-wuppertal.de}}
\date{}

\begin{abstract}
The Maxwell-Klein-Gordon equation
$ \partial^{\alpha} F_{\alpha \beta}  = -Im(\phi \overline{D_{\beta} \phi}) $ , $
D^{\mu}D_{\mu} \phi = m^2 \phi $ , where
$F_{\alpha \beta} = \partial_{\alpha} A_{\beta} - \partial_{\beta} A_{\alpha}$, $D_{\mu} = \partial_{\mu} - iA_{\mu} $,
in the (3+1)-dimensional case is known to be unconditionally well-posed in energy space, i.e. well-posed in the natural solution space. This was proven by Klainerman-Machedon and Masmoudi-Nakanishi in Coulomb gauge and by Selberg-Tesfahun in Lorenz gauge. The main purpose of the present paper is to establish that for both gauges this also holds true for data $\phi(0)$ in Sobolev spaces $H^s$ with less regularity, i.e. $s < 1$, but $s$ sufficently close to $1$. This improves the (conditional) well-posedness results in both cases, i.e. uniqueness in smaller solution spaces of Bourgain-Klainerman-Machedon type, which were essentially known by Cuccagna, Selberg and the author for $s > \frac{3}{4}$ , and which in Coulomb gauge is also contained in the present paper. In fact, the proof consists in demonstrating that any solution in the natural solution space for some $s > s_0$ belongs to a Bourgain-Klainerman-Machedon space where uniqueness is known. Here $s_0 \approx 0.914$ in Coulomb gauge and $s_0 \approx 0.907$ in Lorenz gauge.
\end{abstract}
\maketitle
\renewcommand{\thefootnote}{\fnsymbol{footnote}}
\footnotetext{\hspace{-1.5em}{\it 2010 Mathematics Subject Classification:} 35Q40, 35L70
 \\
{\it Key words and phrases:} Maxwell-Klein-Gordon, uniqueness, well-posedness}
\normalsize 
\setcounter{section}{0}

\section{Introduction}

\noindent Consider the Maxwell-Klein-Gordon equations
\begin{align}
\label{1}
\partial^{\alpha} F_{\alpha \beta} & = -Im(\phi \overline{D_{\beta} \phi}) \\
\label{2}
D^{\mu}D_{\mu} \phi &= m^2 \phi
\end{align}
in Minkowski space $\mathbb{R}^{1+3} = \mathbb{R}_t \times \mathbb{R}^3_x$ with metric $diag(-1,1,1,1)$. Greek indices run over $\{0,1,2,3\}$, Latin indices over $\{1,2,3\}$, and the usual summation convention is used.  Here $m > 0 $ and
$$ \phi: \mathbb{R}\times \mathbb{R}^3 \to \mathbb{C} \, , \, A_{\alpha}: \mathbb{R} \times \mathbb{R}^3 \to \mathbb{R} \, , \, F_{\alpha \beta} = \partial_{\alpha} A_{\beta} - \partial_{\beta} A_{\alpha} \, , \, D_{\mu} = \partial_{\mu} + iA_{\mu} \, . $$
$A_{\mu}$ are the gauge potentials, $F_{\mu \nu}$ is the electromagnetic field. We use the notation $\partial_{\mu} = \frac{\partial}{\partial x_{\mu}}$, where we write $(x^0,x^1,x^2,x^3)=(t,x^1,x^2,x^3)$ and also $\partial_0 = \partial_t$.

We have gauge freedom for the Cauchy problem, because the system is invariant under the gauge transformation 
$ \phi \longrightarrow \phi' = e^{i\chi} \phi \, , \, A_{\mu} \longrightarrow A_{\mu}' + \partial_{\mu} \chi$
for any $\chi: \R^{3+1} \to \R$ . The most common gauges are the Coulomb gauge $\partial^j A_j =0$ , the Lorenz gauge $\partial^{\nu} A_{\nu} =0$ and the temporal gauge $A_0 = 0$ .

We first make some historical remarks. 

In Coulomb gauge Klainerman and Machedon \cite{KM} proved global well-posedness in energy space and above, i.e. for (large) data $$ A_{\nu}(0) = a_{0\nu} \, , \, (\partial_t A)_{\nu} = a_{1\nu} \, , \, \phi(0) = \phi_0 \, , \, (\partial_t\phi)(0) = \phi_1 \, ,$$
where
$$\nabla  a_{0\nu} \in H^{s-1} \, , \, a_{1\nu}\ \in H^{s-1} \, , \, \phi_0 \in H^s \, , \, \phi_1 \in H^{s-1} $$
with $ s \ge 1$ . The main progress over earlier results by Eardley and Moncrief \cite{EM} for smooth data was their detection of a null condition for the nonlinearities. The uniqueness was proven under an additional assumption ("Conditional well-posedness"). Without this restriction the result also holds, as obtained by Masmoudi and Nakanishi \cite{MN} in the natural solution space
$$ \phi \in C^0([0,T],H^1) \cap C^1([0,T],L^2) \, , \, A \in C^0([0,T], \dot{H}^1) \cap C^1([0,T],L^2) \, . $$
This is usually called unconditional uniqueness. This improved an earlier result of Zhou \cite{Z} for a simplified model equation. Conditional local well-posedness for $s > \frac{3}{4}$ and small data was obtained by Cuccagna  \cite{C}. Keel-Roy-Tao \cite{KRT} proved conditional local well-posedness for $s > \frac{5}{6}$ and global well-posedness for $ s > \frac{\sqrt{3}}{2}$ for large data. Machedon and Sterbenz \cite{MS} proved conditional local well-posedness almost down to the scalar-critical regularity $s > \half$ and small data. In 4+1 dimensions the almost optimal conditional local well-posedness result ($s > 1$) for large data was proven by Selberg \cite{S}. In 2+1 dimensions Czubak and Picula \cite{CP} obtained conditional local well-posedness for $s > \half$.

In Lorenz gauge there is also a null condition present in part of the nonlinearities. This was detected by Selberg and Tesfahun \cite{ST}, who proved unconditional global well-posedness in energy space for large data. Conditional local well-posedness for $s > \frac{3}{4}$ was proven by the author \cite{P1}, who also considered the case of $n+1$ dimensions for $n \ge 2$, where conditional local well-posedness holds for $s > \half$ for $n=2$ and $ s > \frac{n}{2}-\frac{3}{4}$ for $ n \ge 3$ (cf. \cite{P2}).

In temporal gauge Tao  \cite{T} obtained local well-posedness for the more general Yang-Mills equations for $s > \frac{3}{4}$ and small data. His methods may be used also to study the large data problem for Maxwell-Klein-Gordon in 3+1 dimensions. This was carried out by Yuan \cite{Y} and Pecher \cite{P3}, who proved conditional local well-posedness for $s > \frac{3}{4}$ as well as conditional global well-posedness and unconditional global well-posedness in energy space, respectively.

The present paper has three parts, all of which address the Cauchy problem for Maxwell-Klein-Gordon in 3+1 dimensions.

In section 2  it is proven in Theorem \ref{Theorem1} that conditional local well-posedness in Coulomb gauge in Bourgain-Klainerman-Machedon spaces $X^{s,b}$ holds for $s > \frac{3}{4}$ and large data, thus removing the small data restriction in Cuccagna's paper \cite{C}. That this is possible was already remarked by Selberg (cf.\cite{S}, Remark 3), whose method  is one of the basic tools for our result. In 3+1 dimensions it is possible to rely completely on $X^{s,b}$-spaces. For the basic estimate for the null terms we may refer to Tao \cite{T}. Moreover we use the bilinear estimates  by d'Ancona-Foschi-Selberg \cite{AFS}. 

Sections 3 and 4, which are the essential elements of the paper, consider the issue of unconditional well-posedness, i.e. uniqueness in the natural solution spaces and not in smaller spaces where existence is typically obtained. 

In section 3 we consider the Coulomb gauge and prove unconditional local well-posedness below energy for $\phi(0) \in H^s$ with $s > \frac{1}{8}(25-\sqrt{313}) \approx 0.9135$. We demonstrate by an iteration process that any solution in the natural solution space posesses more and more regularity until it belongs to a space of $X^{s,b}$-type, where uniqueness holds by part 1. This does not require the use of any null conditions but only relies on Strichartz type estimates.

In section 4 we consider the Lorenz gauge and obtain the unconditional local well-posedness result for $s > 0.907$ (Theorem \ref{Theorem5}). The method is similar to section 3. We reduce the uniqueness issue to proving that any solution in the natural solution space belongs to a space where uniqueness holds by \cite{P1}. It is technically complicated by the fact that in this paper we had to consider data for the potential $A$ in homogeneous Sobolev spaces (and homogeneous parts of the solutions), giving rise to unpleasant small frequency issues. Again we make no use of the null conditions but rely completely on Strichartz type estimates iteratively improving the regularity of the solution.

We define the Bourgain-Klainerman-Machedon spaces $X^{s,b}_{\pm}$ as the completion of the Schwarz space $\mathcal{S}({\mathbb R}^{3+1})$ with respect to the norm
$$ \|u\|_{X^{s,b}_{\pm}} = \| \langle \xi \rangle^s \langle  \tau \pm |\xi| \rangle^b \widehat{u}(\tau,\xi) \|_{L^2_{\tau \xi}}  \, ,$$ 
where $ \langle \, \cdot \, \rangle = (1+|\, \cdot \,|^2)^{\frac{1}{2}}$ ,
and $X^{s,b}_{\pm}[0,T]$ as the space of the restrictions to $[0,T] \times \mathbb{R}^3$.

We also define the wave-Sobolev spaces $H^{s,b}$ as the completion of  $\mathcal{S}({\mathbb R}^{3+1})$ with respect to the norm
$$ \|u\|_{H^{s,b}} =  \| \langle \xi \rangle^s \langle  |\tau| - |\xi| \rangle^b \widehat{u}(\tau,\xi) \|_{L^2_{\tau \xi}}  $$
and $H^{s,b}[0,T]$ as the space of the restrictions to $[0,T] \times \mathbb{R}^3$.

Let $\Lambda^{\alpha}$ , $\Lambda^{\alpha}_m$ and $D^{\alpha}$  be the operators with Fourier symbols $\langle \xi \rangle^{\alpha}$, $( m^2 + |\xi|^2)^{\frac{\alpha}{2}}$ and
$|\xi|^{\alpha}$ , respectively.

$\Box = \partial_{\mu} \partial^{\mu}$ is the d'Alembert operator, $a{\pm} = a\pm\epsilon$ for a sufficiently small $\epsilon > 0$ . 

The following variant of the Strichartz estimate is proven in \cite{ST}, Lemma 7.1.
\begin{prop}
\label{Lemma7.1}
Suppose $2 < q \le \infty$ and $2 \le r < \infty$ satisfy $\half \le \frac{1}{q} + \frac{1}{r} \le 1$ . Then the following estimate holds:
$$ \|u\|_{L^q_t L^r_x} \lesssim \| D^{1-\frac{2}{r}}u\|_{H^{0,1-(\frac{1}{q} + \frac{1}{r})+}} \, , $$
and (by duality)
$$  \| D^{\frac{2}{r}-1}u\|_{H^{0,(\frac{1}{q} + \frac{1}{r})-1-}}\lesssim \|u\|_{L^{q'}_t L^{r'}_x} \, , $$
where $\frac{1}{q}+\frac{1}{q'} = 1 $ and $\frac{1}{r}+\frac{1}{r'} = 1$ .
\end{prop}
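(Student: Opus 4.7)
The plan is to reduce the estimate to the classical $3+1$-dimensional wave Strichartz inequality via the transfer principle and then to fill in the region $\half \le \frac{1}{q}+\frac{1}{r} \le 1$ by complex interpolation. On the wave-admissible line $\frac{1}{q_1}+\frac{1}{r_1}=\half$ with $2<q_1\le\infty$, $2\le r_1<\infty$, the classical Strichartz inequality gives
$$\|e^{\pm it|\nabla|}f\|_{L^{q_1}_t L^{r_1}_x}\lesssim \|D^{1-\frac{2}{r_1}}f\|_{L^2_x}.$$
Decomposing $u=u_++u_-$ with $u_\pm$ Fourier-localized to $\tau\approx\mp|\xi|$ and applying the transfer principle in the form $\|u_\pm\|_{L^{q_1}_tL^{r_1}_x}\lesssim\|D^{1-\frac{2}{r_1}}u_\pm\|_{X^{0,\half+}_\pm}$ upgrades this to the wave-Sobolev estimate
$$\|u\|_{L^{q_1}_tL^{r_1}_x}\lesssim \|D^{1-\frac{2}{r_1}}u\|_{H^{0,\half+}},$$
which settles the claim when $\frac{1}{q}+\frac{1}{r}=\half$.

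At the other extreme, the trivial Plancherel identity $\|u\|_{L^2_tL^2_x}=\|u\|_{H^{0,0}}$ furnishes the case $q=r=2$ with $D^0$ and $b=0$. Writing both right-hand sides as weighted $L^2_{\tau,\xi}$ norms (with weights $|\xi|^{1-\frac{2}{r_1}}\langle|\tau|-|\xi|\rangle^{\half+}$ and $1$, respectively) and applying complex interpolation with parameter $\theta\in[0,1)$ gives, for $\frac{1}{q_\theta}=\frac{1-\theta}{q_1}+\frac{\theta}{2}$ and $\frac{1}{r_\theta}=\frac{1-\theta}{r_1}+\frac{\theta}{2}$,
$$\|u\|_{L^{q_\theta}_tL^{r_\theta}_x}\lesssim \|D^{(1-\theta)(1-\frac{2}{r_1})}u\|_{H^{0,(1-\theta)(\half+)}}.$$
An elementary calculation yields $\frac{1}{q_\theta}+\frac{1}{r_\theta}=\half+\half\theta$, $1-\frac{2}{r_\theta}=(1-\theta)(1-\frac{2}{r_1})$ and $(1-\theta)(\half+)=1-(\frac{1}{q_\theta}+\frac{1}{r_\theta})+$, so the interpolated inequality matches the claim; letting $(q_1,r_1)$ range over the admissible line and $\theta$ over $[0,1)$ sweeps out all $(q,r)$ in the stated region.

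The dual estimate follows by pairing the first inequality with an arbitrary $v\in L^q_tL^r_x$ in $L^2_{t,x}$: the $L^q_tL^r_x$ norm becomes the $L^{q'}_tL^{r'}_x$ norm, the multiplier $D^{1-\frac{2}{r}}$ becomes $D^{\frac{2}{r}-1}$, and the weight $\langle|\tau|-|\xi|\rangle^{1-(\frac{1}{q}+\frac{1}{r})+}$ becomes its reciprocal. The main technical point I expect is the careful execution of the transfer principle: one must split off both $\pm$-components, work with the half-wave Bourgain norms $X^{0,\half+}_\pm$ first, and then recombine them to the full $H^{0,\half+}$ norm, all the while keeping a uniform $\epsilon$-loss in $b$ that persists unchanged through the subsequent interpolation.
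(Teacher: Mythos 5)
Your argument is correct and is essentially the standard proof of this estimate: the paper itself gives no proof but cites Selberg--Tesfahun \cite{ST}, Lemma 7.1, where the estimate is obtained in exactly this way, namely by transferring the sharp-admissible $3+1$-dimensional wave Strichartz estimates to the $H^{0,\half+}$ setting and interpolating with the trivial Plancherel identity at $q=r=2$, the dual form then following from the duality $(H^{0,b})^*=H^{0,-b}$ and $(L^q_tL^r_x)^*=L^{q'}_tL^{r'}_x$. Your exponent bookkeeping (in particular $1-\frac{2}{r_\theta}=(1-\theta)(1-\frac{2}{r_1})$, $\frac{1}{q_\theta}+\frac{1}{r_\theta}=\half+\frac{\theta}{2}$, and the fact that $\theta=2(\frac1q+\frac1r)-1\in[0,1)$ with $\frac{1}{q_1}=\frac{\half-\frac1r}{1-\theta}$, $\frac{1}{r_1}=\frac{\half-\frac1q}{1-\theta}$ recovers every pair in the stated region) checks out.
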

Moreover we need the following bilinear refinement of Strichartz type estimates given by \cite{AFS}, where many limit cases are included which we do not need.

\begin{theorem}
\label{Theorem3.2}
The estimate
$$ \|uv\|_{H^{-s_0,-b_0}} \lesssim \|u\|_{H^{s_1,b_1}} \|v\|_{H^{s_2,b_2}} $$
holds, provided the following conditions are satisfied:
\begin{align*}
b_0+b_1+b_2 > \half \, , \, b_0+b_1 &> 0 \, , \, b_0+b_1 > 0 \, , \, b_1+b_2 > 0 \\
s_0+s_1+s_2 & > 2-(b_0+b_1+b_2) \\
s_0+s_1+s_2 & > \frac{3}{2} -(b_0+b_1) \\
s_0+s_1+s_2 & > \frac{3}{2} -(b_0+b_2) \\
s_0+s_1+s_2 & > \frac{3}{2} -(b_1+b_2)
\end{align*} 
\begin{align*}
s_0+s_1+s_2 & > 1-b_0 \\
s_0+s_1+s_2 & > 1-b_1 \\
s_0+s_1+s_2 & > 1-b_2 \\
s_0+s_1+s_2 & > 1 \\
(s_0+s_1+s_2)+(s_1+s_2+b_0) & > \frac{3}{2} \\
(s_0+s_1+s_2)+(s_0+s_2+b_1) & > \frac{3}{2} \\
(s_0+s_1+s_2)+(s_0+s_1+b_2) & > \frac{3}{2} \\
s_1+s_2 > -b_0 \, , \, s_0+s_2 &> -b_1 \, , \, s_0+s_1 > -b_2 \\
s_1+s_2 \ge 0 \, , \, s_0+s_2 &\ge 0 \, , \, s_0+s_1 \ge 0 \, .
\end{align*}
\end{theorem}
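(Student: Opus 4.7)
The plan is to follow the standard strategy for bilinear wave-Sobolev estimates: dualize, perform dyadic decomposition in both spatial frequency and modulation, reduce to bilinear $L^2$ estimates on characteristic cones, and then carry out the dyadic summation. By duality the claimed estimate is equivalent to the trilinear bound $\bigl|\int uvw\,dt\,dx\bigr| \lesssim \|u\|_{H^{s_1,b_1}}\|v\|_{H^{s_2,b_2}}\|w\|_{H^{s_0,b_0}}$. Decompose $u=\sum_{N_1,L_1} u_{N_1,L_1}$ into pieces supported in the dyadic region $\{|\xi|\sim N_1,\;\bigl||\tau|-|\xi|\bigr|\sim L_1\}$, and analogously for $v$ and $w$, so that each wave-Sobolev norm becomes a weighted $\ell^2$-sum of $L^2$-norms of its pieces.

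The essential analytic input is the sharp bilinear $L^2$ estimate for dyadically localized free waves (Wolff, Tao, Foschi-Klainerman): for pieces $u_{N_1,L_1}$ and $v_{N_2,L_2}$ one has $\|u_{N_1,L_1}v_{N_2,L_2}\|_{L^2_{tx}}\lesssim C(N_1,N_2,L_1,L_2)\|u_{N_1,L_1}\|_{L^2}\|v_{N_2,L_2}\|_{L^2}$, where the constant $C$ encodes the angular interaction (transversal vs.\ parallel) and depends on which modulation dominates. Pairing one such bilinear estimate against the third factor in the trilinear form via Cauchy-Schwarz (symmetrizing over which pair is bundled first) reduces the problem to showing that a certain dyadic sum over $(N_i,L_i)_{i=0,1,2}$ converges.

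The hypotheses of the theorem match one-to-one with the sufficient conditions for summability in the various regimes. The assumptions $b_0+b_1+b_2>\half$ and $b_i+b_j>0$ ensure convergence in the modulation indices. The three bounds $s_0+s_1+s_2>\frac{3}{2}-(b_i+b_j)$ arise in the three regimes where one modulation dominates and the bilinear $L^2$ estimate is applied to the remaining two pieces. The inequalities $s_0+s_1+s_2>1-b_i$ and $s_0+s_1+s_2>1$ control high-high-to-low frequency interactions; the diagonal conditions $(s_0+s_1+s_2)+(s_i+s_j+b_k)>\frac{3}{2}$ govern the regime where two frequencies are comparable and much larger than the third; and the sign conditions $s_i+s_j\ge 0$ guarantee compatibility at the frequency endpoint. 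Finally, $s_0+s_1+s_2>2-(b_0+b_1+b_2)$ is the overall scaling/energy bound that prevents a logarithmic divergence.

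The main obstacle is combinatorial rather than analytic: there are roughly nine regimes to analyze (three choices of which modulation is largest, times three possible orderings of the frequencies), and for each one must verify that the relevant listed hypothesis provides a strict geometric gain in some dyadic parameter. Keeping track of which condition saves which regime is the delicate bookkeeping that d'Ancona-Foschi-Selberg \cite{AFS} undertake in their complete atlas of cases, which produces precisely the list stated above.
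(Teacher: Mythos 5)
The paper does not prove this theorem at all: it is imported verbatim from d'Ancona--Foschi--Selberg \cite{AFS}, so the ``paper's proof'' is simply the citation. Your outline correctly describes the strategy of that reference (dualize, dyadic decomposition in frequency and modulation, bilinear $L^2$ cone estimates, case-by-case summation), and since the substantive work --- verifying that each listed hypothesis yields summability in each dyadic regime --- is deferred to \cite{AFS} in both your proposal and the paper, the two treatments coincide.
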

The Sobolev multiplication law is standard (cf. \cite{T}, Cor. 3.16):
\begin{prop}
\label{Prop.4.4}
Let $s_0,s_1,s_2 \in\R$ . \\
a. If $s_0+s_1+s_2 >  \frac{3}{2}$ , $s_0+s_1 \ge 0$ , $s_0+s_2 \ge 0$ , $s_1+s_2 \ge 0$ , 
then
$$ \|uv\|_{H^{-s_0}} \lesssim \|u\|_{H^{s_1}}  \|v\|_{H^{s_2}} \, . $$
b.  If $s_0+s_1+s_2  = \frac{3}{2}$ , $s_0+s_1 > 0$ , $s_0+s_2 > 0$ , $s_1+s_2 > 0$ , 
then
$$ \|uv\|_{\dot{H}^{-s_0}} \lesssim \|u\|_{\dot{H}^{s_1}}  \|v\|_{\dot{H}^{s_2}} \, . $$
\end{prop}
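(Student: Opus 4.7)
The approach I would take is duality combined with a Littlewood--Paley decomposition; this is the standard route for fractional Leibniz / product estimates, and the hypotheses listed are precisely the ones dictated by the summation of the resulting dyadic series.

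First, by duality I reduce part (a) to the trilinear estimate
$$ \Bigl| \int_{\R^3} u\,v\,w\,dx \Bigr| \lesssim \|u\|_{H^{s_1}} \|v\|_{H^{s_2}} \|w\|_{H^{s_0}}, $$
and part (b) to the analogous statement in homogeneous spaces with $w \in \dot H^{s_0}$. Next I decompose $u = \sum_j P_j u$, $v = \sum_k P_k v$, $w = \sum_l P_l w$ into (inhomogeneous for (a), homogeneous for (b)) Littlewood--Paley pieces localized at frequency $\sim 2^j$, $2^k$, $2^l$. Frequency support considerations force $\int P_j u\, P_k v\, P_l w\,dx = 0$ unless the two largest indices among $j,k,l$ are comparable, yielding three symmetric cases.

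In the representative case $j \le k \sim l$, I put the lowest-frequency factor in $L^\infty$ by Bernstein, $\|P_j u\|_{L^\infty} \lesssim 2^{3j/2} \|P_j u\|_{L^2}$, and use Hölder in $L^2 \cdot L^2$ for the other two factors. Substituting $\|P_j u\|_{L^2} \lesssim 2^{-j s_1}\|P_j u\|_{H^{s_1}}$ (and analogously for $v,w$), the contribution of this case is controlled by
$$ \sum_{j \le k \sim l} 2^{j(\frac{3}{2}-s_1)}\,2^{-k s_2}\,2^{-l s_0}\, \|P_j u\|_{H^{s_1}} \|P_k v\|_{H^{s_2}} \|P_l w\|_{H^{s_0}}. $$
Cauchy--Schwarz in the comparable pair $k \sim l$ collapses it into $\|v\|_{H^{s_2}}\|w\|_{H^{s_0}}$ provided $s_0+s_2 \ge 0$, and the remaining single-index geometric series in $j$ converges under $s_0+s_1+s_2 > \tfrac{3}{2}$, which is exactly the $\tfrac{3}{2}$-loss charged by Bernstein. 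The other two cases are symmetric and are handled by the partner conditions $s_0+s_1 \ge 0$ and $s_1+s_2 \ge 0$.

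For part (b) I repeat the argument with homogeneous dyadic projections indexed by $j \in \mathbb{Z}$; the scale-invariant hypothesis $s_0+s_1+s_2 = \tfrac{3}{2}$ renders the single-index sum over $j$ borderline, and the strict pairwise inequalities $s_i+s_j > 0$ are exactly what is required for the homogeneous dyadic sums to converge at both the high- and low-frequency ends. The only delicate point in the whole argument is this bookkeeping at the borderline: one must verify in each of the three symmetric cases that the stated strict/nonstrict conditions are tight for summability, but no ingredient beyond Bernstein, Hölder and Cauchy--Schwarz enters. As the paper notes, the full claim is recorded as Corollary 3.16 in \cite{T}.
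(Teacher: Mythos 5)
Your argument is correct, but note that the paper itself offers no proof of this proposition: it is quoted as a standard fact with a reference to Tao (Cor.~3.16 of the cited Yang--Mills paper), so there is no in-paper argument to compare against. Your duality plus Littlewood--Paley route, with Bernstein on the lowest-frequency factor and Cauchy--Schwarz over the two comparable frequencies, is exactly the standard proof and the bookkeeping you describe is sound: in the case $j\le k\sim l$ the pairwise condition $s_0+s_2\ge 0$ lets the $k\sim l$ sum close with a residual factor $2^{-j(s_0+s_2)}$ (or, in the borderline case $s_0+s_2=0$, forces $s_1>\tfrac{3}{2}$ directly), after which the $j$-sum converges under $s_0+s_1+s_2>\tfrac{3}{2}$; and in the homogeneous case the scale-invariance $s_0+s_1+s_2=\tfrac{3}{2}$ together with the strict pairwise inequalities is precisely what makes the $\mathbb{Z}$-indexed sums converge. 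No gap.
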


\section{Conditional local well-posedness in Coulomb gauge}
\noindent If we consider the Maxwell-Klein-Gordon equations (\ref{1}),(\ref{2}) in Coulomb gauge
\begin{equation}
\label{a}
\partial^j A_j =0 \, ,
\end{equation}
we obtain
\begin{align}
\label{b}
\square A_j & = - Im(\phi \overline{\partial_j \phi}) + |\phi|^2 A_j - \partial_j \partial_t A_0\\
\label{c}
(\square -m^2) \phi & = -2i A^j \partial_j \phi +2iA_0 \partial_t \phi + i (\partial_t A_0)\phi + A^{\mu} A_{\mu} \phi \\
\label{d'}
\Delta A_0 & = -Im(\phi \overline{\partial_t \phi}) + |\phi|^2 A_0 \, .
\end{align}
We want to solve (\ref{a})-(\ref{d'}) and the following initial conditions:
\begin{equation}
\label{IC}
 A(0) = a_0 \, , \, (\partial_t A)(0) = a_1 \, , \, \phi(0) = \phi_0 \, , \, (\partial_t \phi)(0) = \phi_1 \, , 
\end{equation}
where $A=(A_1,A_2,A_3)$ . If the Coulomb condition (\ref{a}) is imposed, we necessarily have to require the compatibility condition
\begin{equation}
\label{CC}
 \partial^j a_{0j} = \partial^j a_{1j} = 0 \, . 
\end{equation}
We add the following equation to the system by differentiating (\ref{d'}) to $t$ and using (\ref{a}).
\begin{equation}
\label{e'}
\Delta B_0 = - Im \, \partial^j(\phi \overline{\partial_j \phi}) + \partial^j(|\phi|^2 A_j) \, ,
\end{equation}
where $B_0 = \partial_t A_0$ . It is well-known (cf. \cite{S}), that (\ref{b}),(\ref{c}),(\ref{d'}),(\ref{e'}) can be written in the following form:
\begin{align}
\nonumber
(\square -1)A_j & = 2 R^k (-\Delta)^{-\half} Q_{jk}(Re \, \phi,Im \, \phi) + P(|\phi|^2A_j) -A_j \\
\label{f}
& =: M_j(A_j,\phi) \\
\nonumber
(\square -m^2) \phi & = -i Q_{jk}(\phi, (-\Delta)^{-\half} (R^j A^k - R^k A^j)) + 2i A_0 \partial_t \phi + i B_0 \phi + A^{\mu} A_{\mu} \phi \\
\label{g}
& =: N(A,\phi) \\
\label{d}
\Delta A_0 & = -Im(\phi \overline{\partial_t \phi}) + |\phi|^2 A_0 \\
\label{e}
\Delta B_0 &= - Im \, \partial^j(\phi \overline{\partial_j \phi}) + \partial^j(|\phi|^2 A_j) \, .
\end{align}
Here $Q_{jk}(u,v) := \partial_j u \partial_k v - \partial_k u \partial_j v$ are the standard null forms. $P$ denotes the projection onto the divergence-free vector fields defined by $PA = \Delta^{-1} \nabla \times (\nabla \times A)$,  and $R_j := D^{-1} \partial_j$ is the Riesz transform. Both operators are bounded in all the spaces considered in the sequel.

Our first aim is to solve the elliptic equations (\ref{d}),(\ref{e}) in order to obtain $A_0=A_0(\phi)$ and $B_0=B_0(\phi,A)$ for given sufficiently regular $A$ and $\phi$ , so that we are left with the purely hyperbolic system (\ref{f}),(\ref{g}). We want to obtain a solution in the following regularity class:
\begin{align*}
A_j,\phi \in H^{s,\frac{3}{4}+}[0,T] \, &, \, \partial_t A_j,\partial_t \phi \in H^{s-1,\frac{3}{4}+}[0,T] \\
A_0 \in C^0([0,T],\dot{H}^1 ) \, &, \, \nabla A_0 \in L^2([0,T],H^{\half+}) \\
 B_0 \in C^0([0,T],L^2 ) \, &, \, \nabla B_0 \in L^2([0,T],H^{-\half+})
\end{align*}
under the assumption $s > \frac{3}{4}$ .

We first consider the elliptic equation (\ref{d}).

\begin{lemma}
\label{Lemma6}
(cf. \cite{S}, Lemma 6) 
The equation
$$ \Delta u - |\phi|^2 u = -Im(\phi f) $$
has a unique solution $u \in \dot{H}^1$ , which fulfills
\begin{equation}
\label{76}
\| u \|_{\dot{H}^1} \lesssim \|\phi\|_{H^{\frac{3}{4}}} \|f\|_{H^{-\frac{1}{4}}} \, , 
\end{equation}
provided $\phi \in H^{\frac{3}{4}}$ , $f \in H^{-\frac{1}{4}}$ .
\end{lemma}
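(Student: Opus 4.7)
The plan is to treat the equation as a linear elliptic problem for the real-valued unknown $u$ with given coefficient $|\phi|^2 \geq 0$ and source $-\mathrm{Im}(\phi f)$, and to solve it variationally on $\dot{H}^1(\R^3)$. Rewriting as $-\Delta u + |\phi|^2 u = \mathrm{Im}(\phi f)$, I seek $u\in\dot{H}^1$ satisfying $B(u,v)=L(v)$ for every $v\in\dot{H}^1$, where $B(u,v) := \int \nabla u\cdot\nabla v + \int |\phi|^2 u v$ and $L(v) := \int \mathrm{Im}(\phi f)\,v$. Coercivity $B(u,u)\geq\|u\|_{\dot{H}^1}^2$ is immediate. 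Continuity of $B$ on $\dot{H}^1\times\dot{H}^1$ follows by combining $\dot{H}^1(\R^3)\hookrightarrow L^6$ with $H^{3/4}(\R^3)\hookrightarrow L^4$ and, by interpolation against the trivial embedding into $L^2$, $H^{3/4}\hookrightarrow L^{12/5}$; generalized H\"older's inequality ($\frac{5}{12}+\frac{1}{4}+\frac{1}{6}+\frac{1}{6}=1$) then yields $|\int|\phi|^2 u v|\leq\|\phi\|_{L^{12/5}}\|\phi\|_{L^4}\|u\|_{L^6}\|v\|_{L^6}\lesssim\|\phi\|_{H^{3/4}}^2\|u\|_{\dot{H}^1}\|v\|_{\dot{H}^1}$.

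The main step is boundedness of $L$ on $\dot{H}^1$, which via the $H^{-\frac{1}{4}}$--$H^{\frac{1}{4}}$ duality pairing $|L(v)|\leq\|f\|_{H^{-1/4}}\|\bar\phi v\|_{H^{1/4}}$ reduces to the product estimate
$$ \|\phi v\|_{H^{1/4}} \lesssim \|\phi\|_{H^{3/4}}\|v\|_{\dot{H}^1}. $$
I would split the $H^{1/4}$ norm into its $L^2$ and $\dot{H}^{1/4}$ components. The $L^2$ part follows from $\|\phi v\|_{L^2}\leq\|\phi\|_{L^3}\|v\|_{L^6}$ together with $H^{1/2}\hookrightarrow L^3$ and $\dot{H}^1\hookrightarrow L^6$. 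For the $\dot{H}^{1/4}$ part I would invoke Proposition \ref{Prop.4.4}(b) with $(s_0,s_1,s_2)=(-\frac{1}{4},\frac{3}{4},1)$: the scaling identity $s_0+s_1+s_2 = \frac{3}{2}$ is the critical one, and the pairwise sums $\frac{1}{2},\frac{3}{4},\frac{7}{4}$ are strictly positive, so the homogeneous multiplication estimate just applies.

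Once $B$ is continuous and coercive on $\dot{H}^1$ and $L$ is bounded by $\|\phi\|_{H^{3/4}}\|f\|_{H^{-1/4}}$, Lax--Milgram produces a unique $u\in\dot{H}^1$ with $B(u,\cdot)=L(\cdot)$; testing with $v=u$ then gives $\|u\|_{\dot{H}^1}^2\leq B(u,u)=L(u)\lesssim\|\phi\|_{H^{3/4}}\|f\|_{H^{-1/4}}\|u\|_{\dot{H}^1}$, which is precisely (\ref{76}). The main obstacle is the borderline nature of the multiplication estimate at the critical scaling: the inhomogeneous version Proposition \ref{Prop.4.4}(a) would require the strict inequality $s_0+s_1+s_2 > \frac{3}{2}$, so recourse to the homogeneous version (Proposition \ref{Prop.4.4}(b)), which exactly tolerates equality in the scaling sum, is essential.
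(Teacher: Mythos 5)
Your proposal is correct and follows essentially the same route as the paper: the same variational formulation on $\dot{H}^1$, coercivity plus continuity of the bilinear form, boundedness of the linear functional via the product estimate $\|\phi v\|_{H^{1/4}}\lesssim\|\phi\|_{H^{3/4}}\|v\|_{\dot{H}^1}$, and Riesz/Lax--Milgram. Your only departures are cosmetic — a different H\"older split for the cubic term and a more explicit verification of the borderline product estimate by separating the $L^2$ and $\dot{H}^{1/4}$ pieces and invoking the homogeneous Proposition \ref{Prop.4.4}(b), which is a welcome clarification of a step the paper leaves terse.
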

\begin{proof}
We have to solve the equation
\begin{equation}
\label{77}
\int (\nabla u \cdot \nabla v + |\phi|^2 uv)\, dx = \int Im(\phi f)v \, dx \, .
\end{equation}
Now
\begin{align}
\label{78}
| \int |\phi|^2 uv \, dx | & \le \|\phi\|_{L^3}^2 \|u\|_{L^6} \|v\|_{L^6} \lesssim \|\phi\|_{H^{\frac{3}{4}}}^2 \|u\|_{\dot{H}^1} \|v\|_{\dot{H}^1}  \\
\label{79}
| \int Im(\phi f) v \, dx | & \le \|\phi v\|_{H^{\frac{1}{4}}} \|f\|_{H^{-\frac{1}{4}}} \lesssim \|\phi\|_{H^{\frac{3}{4}}}  \|v\|_{\dot{H}^1} \|f\|_{\dot{H}^{-\frac{1}{4}}}
\end{align}
by Prop. \ref{Prop.4.4}. For $v = \overline{u} $ we obtain
$$ \|u\|_{\dot{H}^1}^2 + \|\phi u\|_{L^2}^2 \le \half \|u\|_{\dot{H}^1}^2 + \frac{c}{2} \|\phi\|_{H^{\frac{3}{4}}}^2 \|f\|_{H^{-\frac{1}{4}}}^2 \, , $$
thus we obtain (\ref{76}). By (\ref{78}) the left hand side of (\ref{77}) defines a scalar product  on $\dot{H}^1$ with a norm which is equivalent to the $\dot{H}^1$ - norm. By (\ref{79}) the right hand side of (\ref{77}) is a bounded linear functional on $\dot{H}^1$, so that the Riesz representation theorem gives the existence of a unique solution.
\end{proof}
It is easy to generalize Lemma \ref{Lemma6} to
\begin{lemma}
\label{Lemma7} (cf. \cite{S}, Lemma 7)
Let $\phi,\psi \in H^{\frac{3}{4}}$ and $f,g \in H^{-\frac{1}{4}}$ . Let $u,v$ be the solutions of
\begin{align*}
\Delta u - |\phi|^2 u & = - Im(\phi f) \\
\Delta v - |\psi|^2 v & = - Im(\psi g)   \, .
\end{align*}
Then
\begin{align*}
\|u-v\|_{\dot{H}^1} \lesssim (\|\phi\|_{H^{\frac{3}{4}}} + \|\psi\|_{H^{\frac{3}{4}}} +\|g\|_{H^{-\frac{1}{4}}})(\|\phi - \psi\|_{H^{\frac{3}{4}}} + \|f-g\|_{H^{-\frac{1}{4}}}) \, .
\end{align*}
\end{lemma}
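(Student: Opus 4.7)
My plan is to subtract the two equations and apply the same variational strategy as in the proof of Lemma \ref{Lemma6} to the resulting equation for $w := u - v$. Subtracting gives
$$ \Delta w - |\phi|^2 w = -Im(\phi f - \psi g) + (|\phi|^2 - |\psi|^2) v \, . $$
To expose linear dependence on the differences, I would split $\phi f - \psi g = \phi(f-g) + (\phi-\psi) g$ and $|\phi|^2 - |\psi|^2 = \phi(\overline{\phi}-\overline{\psi}) + \overline{\psi}(\phi - \psi)$. Since the coercive bilinear form on the left is the same as in Lemma \ref{Lemma6}, its equivalence with the $\dot{H}^1$-inner product (established via (\ref{78})) is already available.

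Next I would test the weak formulation of the above equation against $\overline{w}$, yielding
$$ \|w\|_{\dot{H}^1}^2 + \|\phi w\|_{L^2}^2 \le \Bigl| \int Im(\phi(f-g))\,\overline{w} \Bigr| + \Bigl| \int Im((\phi-\psi)g)\,\overline{w} \Bigr| + \Bigl| \int (|\phi|^2-|\psi|^2)\, v\, \overline{w}\, dx \Bigr| \, . $$
The first two terms are estimated by the inequality (\ref{79}) used in the proof of Lemma \ref{Lemma6}, giving the contributions $\|\phi\|_{H^{3/4}} \|f-g\|_{H^{-1/4}} \|w\|_{\dot{H}^1}$ and $\|\phi-\psi\|_{H^{3/4}} \|g\|_{H^{-1/4}} \|w\|_{\dot{H}^1}$ respectively.

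For the third term I would apply Hölder with the split $L^3 \cdot L^3 \cdot L^6 \cdot L^6$ together with the Sobolev embeddings $H^{3/4} \hookrightarrow L^3$ and $\dot{H}^1 \hookrightarrow L^6$ (in dimension three, since $3/4 > 1/2$), obtaining
$$ \Bigl| \int (|\phi|^2-|\psi|^2)\,v\,\overline{w}\,dx \Bigr| \lesssim (\|\phi\|_{H^{3/4}} + \|\psi\|_{H^{3/4}}) \|\phi-\psi\|_{H^{3/4}} \|v\|_{\dot{H}^1} \|w\|_{\dot{H}^1} \, . $$
Lemma \ref{Lemma6} applied to $v$ then bounds $\|v\|_{\dot{H}^1} \lesssim \|\psi\|_{H^{3/4}} \|g\|_{H^{-1/4}}$. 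Combining everything and absorbing one factor of $\|w\|_{\dot{H}^1}$ from both sides gives a bound of the claimed form, with the implicit constant depending polynomially on $\|\phi\|_{H^{3/4}}$ and $\|\psi\|_{H^{3/4}}$.

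The main obstacle is precisely this last, quadratic-in-potential term: it is not genuinely linear in the ``differences'' $\phi-\psi$ and $f-g$, so to fit it into the stated right-hand side one must fold the a priori Sobolev control of $v$ from Lemma \ref{Lemma6} into the implicit constant — i.e., read the estimate as a Lipschitz-on-bounded-sets statement, which is the sense needed for the contraction/difference arguments later in the paper.
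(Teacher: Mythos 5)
Your argument is correct and is exactly the intended one: the paper gives no proof of this lemma (it is introduced as an ``easy generalization'' of Lemma \ref{Lemma6}), and the same subtract-and-test scheme you describe is carried out explicitly for the analogous Lemma \ref{Lemma12}, whose difference equation $(\Delta-|\phi|^2)(u-v)=(|\phi|^2-|\psi|^2)v+(f-g)$ matches yours. Your closing observation is also accurate: the term $(|\phi|^2-|\psi|^2)v$ forces the implicit constant to depend polynomially on $\|\phi\|_{H^{3/4}}$, $\|\psi\|_{H^{3/4}}$, $\|g\|_{H^{-1/4}}$, so the stated bound must be read as Lipschitz continuity on bounded sets, which is precisely the sense in which it is invoked in Lemmas \ref{Lemma8}--\ref{Lemma10}.
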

An immediate consequence is the following lemma.

\begin{lemma}
\label{Lemma8}
(cf. \cite{S},Lemma 8) Assume $\phi \in C^0([0,T],H^{\frac{3}{4}}) \cap C^1([0,T],H^{-\frac {1}{4}})$ and $f \in  C^0([0,T],H^{-\frac{1}{4}})$ . Then the equation (\ref{d}) has a unique solution $A_0 = A_0(\phi) \in C^0([0,T],\dot{H}^1)$ and
\begin{align*}
\|A_0(t)-A_0(s)\|_{\dot{H}^1} \lesssim (\|\phi\|_{L^{\infty}_t H^{\frac{3}{4}}_x} + \|\partial_t \phi\|_{L^{\infty}_t H^{-\frac{1}{4}}_x})& (\|\phi(t)-\phi(s)\|_{H^{\frac{3}{4}}} \\ 
&+ \|\partial_t\phi(t)-\partial_t\phi(s)\|_{H^{-\frac{1}{4}}}) \, .
\end{align*}
\end{lemma}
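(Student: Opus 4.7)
The strategy is to apply the preceding two lemmas pointwise in $t$ and then upgrade the pointwise $\dot H^1$-bounds to a continuity statement.

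First, observe that equation (\ref{d}) can be rewritten as $\Delta A_0 - |\phi|^2 A_0 = -\mathrm{Im}(\phi\, \overline{\partial_t \phi})$, which for each fixed $t \in [0,T]$ is exactly the elliptic equation solved in Lemma \ref{Lemma6} with data $\phi = \phi(t) \in H^{3/4}$ and source $f = \overline{\partial_t \phi(t)} \in H^{-1/4}$. Thus Lemma \ref{Lemma6} produces, for each $t$, a unique $A_0(t) \in \dot H^1$ with
$$ \|A_0(t)\|_{\dot H^1} \lesssim \|\phi(t)\|_{H^{3/4}} \|\partial_t \phi(t)\|_{H^{-1/4}} , $$
so that $A_0$ is a well-defined (and by uniqueness, canonical) map $[0,T] \to \dot H^1$.

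Next I would apply Lemma \ref{Lemma7} with $\phi \leftarrow \phi(t)$, $\psi \leftarrow \phi(s)$, $f \leftarrow \overline{\partial_t \phi(t)}$, $g \leftarrow \overline{\partial_t \phi(s)}$. This yields
$$ \|A_0(t)-A_0(s)\|_{\dot H^1} \lesssim \bigl(\|\phi(t)\|_{H^{3/4}} + \|\phi(s)\|_{H^{3/4}} + \|\partial_t\phi(s)\|_{H^{-1/4}}\bigr) \bigl(\|\phi(t)-\phi(s)\|_{H^{3/4}} + \|\partial_t\phi(t)-\partial_t\phi(s)\|_{H^{-1/4}}\bigr). $$
Bounding the first factor by the $L^\infty_t$-norms on $[0,T]$ (which are finite by the hypothesis $\phi \in C^0_t H^{3/4}_x \cap C^1_t H^{-1/4}_x$) gives the stated difference estimate.

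Finally, continuity of $A_0 \colon [0,T] \to \dot H^1$ is immediate from this difference estimate together with the continuity of $t \mapsto \phi(t) \in H^{3/4}$ and $t \mapsto \partial_t \phi(t) \in H^{-1/4}$, so that $A_0 \in C^0([0,T],\dot H^1)$. There is no real obstacle in this argument; the only mildly subtle point is that Lemma \ref{Lemma6} was stated for a single equation, so one must confirm that the pointwise-in-$t$ solutions really do assemble into a measurable/continuous function of $t$, which is precisely what Lemma \ref{Lemma7} (applied as above) delivers.
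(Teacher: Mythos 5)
Your argument is correct and is exactly the one the paper intends: Lemma \ref{Lemma8} is stated as an ``immediate consequence'' of Lemma \ref{Lemma7}, obtained by applying Lemmas \ref{Lemma6} and \ref{Lemma7} at each fixed $t$ with $f=\overline{\partial_t\phi(t)}$, $g=\overline{\partial_t\phi(s)}$ and bounding the first factor by the $L^\infty_t$ norms. Nothing is missing.
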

Moreover we obtain
\begin{lemma}
\label{Lemma9}
Assume $\phi,\psi \in C^0([0,T],H^{\frac{3}{4}}) \cap C^1([0,T],H^{-\frac{1}{4}})$ . Let $A_0(\phi)$ , $A_0(\psi)$ be the solutions of
\begin{align}
\label{80}
\Delta A_0(\phi) & = Im(\phi \overline{\partial_t \phi}) + |\phi|^2 A_0(\phi) \, , \\
\Delta A_0(\psi) & = Im(\psi \overline{\partial_t \psi}) + |\psi|^2 A_0(\psi) \, .
\label{81}
\end{align}
Then
\begin{align*}
\| A_0(\phi) -A_0(\psi)\|_{L^{\infty}_t \dot{H}^1_x} & \lesssim (\|\phi\|_{L^{\infty}_t H^{\frac{3}{4}}_x} + \|\psi\|_{L^{\infty}_t H^{\frac{3}{4}}_x} +\|\partial_t \phi\|_{L^{\infty}_t H^{-\frac{1}{4}}_x}  +\|\partial_t \psi\|_{L^{\infty}_t H^{-\frac{1}{4}}_x}) \\
& \quad \cdot ((\|\phi - \psi\|_{L^{\infty}_t H^{\frac{3}{4}}_x} +  \|\partial_t \psi - \partial_t \psi\|_{L^{\infty}_t H^{-\frac{1}{4}}_x}) \, .
\end{align*}
\end{lemma}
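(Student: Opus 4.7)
The plan is to derive the bound as an immediate time-pointwise corollary of Lemma \ref{Lemma7}. I first rewrite the two elliptic equations (\ref{80}) and (\ref{81}) in the form treated there. Setting $f := -\overline{\partial_t \phi}$ and $g := -\overline{\partial_t \psi}$, they take the shape
\begin{align*}
\Delta A_0(\phi) - |\phi|^2 A_0(\phi) &= - Im(\phi f),\\
\Delta A_0(\psi) - |\psi|^2 A_0(\psi) &= - Im(\psi g),
\end{align*}
with the pointwise identities $\|f(t)\|_{H^{-\frac{1}{4}}} = \|\partial_t \phi(t)\|_{H^{-\frac{1}{4}}}$, $\|g(t)\|_{H^{-\frac{1}{4}}} = \|\partial_t \psi(t)\|_{H^{-\frac{1}{4}}}$ and $\|f(t) - g(t)\|_{H^{-\frac{1}{4}}} = \|\partial_t \phi(t) - \partial_t \psi(t)\|_{H^{-\frac{1}{4}}}$. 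Under the assumed regularity, Lemma \ref{Lemma8} ensures that for every fixed $t \in [0,T]$ the values $A_0(\phi)(t)$ and $A_0(\psi)(t)$ are precisely the unique $\dot{H}^1$-solutions produced by Lemma \ref{Lemma6}, so Lemma \ref{Lemma7} applies verbatim at each such $t$.

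Applying Lemma \ref{Lemma7} at each $t \in [0,T]$ therefore gives
\begin{align*}
\| A_0(\phi)(t) - A_0(\psi)(t) \|_{\dot{H}^1} &\lesssim (\|\phi(t)\|_{H^{\frac{3}{4}}} + \|\psi(t)\|_{H^{\frac{3}{4}}} + \|\partial_t \psi(t)\|_{H^{-\frac{1}{4}}})\\
&\quad \cdot (\|\phi(t)-\psi(t)\|_{H^{\frac{3}{4}}} + \|\partial_t \phi(t) - \partial_t \psi(t)\|_{H^{-\frac{1}{4}}}).
\end{align*}
Taking the supremum in $t$ on the left and dominating each factor on the right by its $L^\infty_t$ norm yields the claimed inequality. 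The (mild) asymmetry between the prefactor produced by Lemma \ref{Lemma7}, which contains only $\|\partial_t\psi\|_{L^\infty_t H^{-\frac{1}{4}}_x}$, and the symmetric prefactor written in the statement is harmless: one simply adds the nonnegative term $\|\partial_t\phi\|_{L^\infty_t H^{-\frac{1}{4}}_x}$ to restore the symmetry. I do not foresee any genuine obstacle beyond this bookkeeping, since all of the analytic work (solvability, Sobolev multiplication via Proposition \ref{Prop.4.4}, and the difference estimate) has already been carried out in Lemmas \ref{Lemma6}--\ref{Lemma8}.
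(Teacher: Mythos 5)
Your proposal is correct and follows exactly the paper's argument: apply the difference estimate of Lemma \ref{Lemma7} at each fixed $t$ with $f$, $g$ given by (the conjugates of) $\partial_t\phi(t)$, $\partial_t\psi(t)$, then take the supremum over $t\in[0,T]$. The sign bookkeeping and the symmetrization of the prefactor are harmless, as you note.
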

\begin{proof}
Use Lemma \ref{Lemma7} for each fixed $t$ and $f= \overline{\partial_t \phi (t)}$ , $g=\overline{\partial_t \psi (t)}$ and take the supremum over $t \in [0,T]$ .
\end{proof}

\begin{lemma}
\label{Lemma10}
Let $\phi,\psi \in H^{s,\half +}$ , $\partial_t \phi , \partial_t \phi \in H^{s-1,\half+}$ for $s > \frac{3}{4}$ , and $A_0(\phi)$ , $A_0(\psi)$ the solutions of (\ref{80}),(\ref{81}). Then
\begin{align}
\nonumber
&\|\Delta( A_0(\phi) -A_0(\psi))\|_{H^{-\half+,0}}  \lesssim (\|\phi\|_{H^{s,\half+}} + \|\psi\|_{H^{s,\half+}} \\
& \quad \quad +\|\partial_t \phi\|_{H^{s-1,\half+}} +\|\partial_t \psi\|_{H^{s-1,\half+}} ) 
\label{82}
 \cdot ((\|\phi - \psi\|_{H^{s,\half+}}  +  \|\partial_t \psi - \partial_t \psi\|_{H^{s-1,\half+}} ) \, .
\end{align}
\end{lemma}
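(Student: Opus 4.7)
My approach is to subtract the two elliptic equations (\ref{80}) and (\ref{81}) and telescope each difference, so that
\[
\Delta(A_0(\phi)-A_0(\psi)) = \operatorname{Im}\bigl((\phi-\psi)\overline{\partial_t\phi}\bigr) + \operatorname{Im}\bigl(\psi\overline{\partial_t(\phi-\psi)}\bigr) + \bigl((\phi-\psi)\overline{\phi}+\psi\overline{(\phi-\psi)}\bigr)A_0(\phi) + |\psi|^2\bigl(A_0(\phi)-A_0(\psi)\bigr).
\]
Since the target norm $H^{-\half+,0}$ is simply $L^2_t H^{-\half+}_x$, the plan is to bound each of the four summands by a product of the claimed norms, with exactly one factor of $\phi-\psi$ or $\partial_t(\phi-\psi)$ appearing per term.

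For the two bilinear pieces I would invoke Theorem \ref{Theorem3.2} with the parameters $s_0=\half-$, $b_0=0$, $s_1=s$, $s_2=s-1$, $b_1=b_2=\half+$. A direct inspection of the long list of inequalities shows that all of them are strict for $s>\frac{3}{4}$; the binding constraints are $s_0+s_1+s_2>1$ and $(s_0+s_1+s_2)+(s_1+s_2+b_0)>\frac{3}{2}$, both of which reduce to $s>\frac{3}{4}$. This yields $\|(\phi-\psi)\overline{\partial_t\phi}\|_{H^{-\half+,0}} \lesssim \|\phi-\psi\|_{H^{s,\half+}} \|\partial_t\phi\|_{H^{s-1,\half+}}$, and symmetrically for the twin term $\psi\overline{\partial_t(\phi-\psi)}$.

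For the two trilinear pieces I would fall back on H\"older and Sobolev embedding. Since $s>\frac{3}{4}$, the chain $H^{s,\half+}\hookrightarrow L^\infty_t H^s_x\hookrightarrow L^\infty_t L^r_x$ holds for some $r>4$, while Lemma \ref{Lemma6} gives $A_0(\phi)\in L^\infty_t \dot H^1_x\hookrightarrow L^\infty_t L^6_x$ with norm controlled by $\|\phi\|_{L^\infty_t H^{3/4}_x}\|\partial_t\phi\|_{L^\infty_t H^{-1/4}_x}$. H\"older in space-time then places $(\phi-\psi)\overline{\phi}\,A_0(\phi)$ in $L^2_t L^{\frac{3}{2}+}_x$, and the dual Sobolev embedding $L^{\frac{3}{2}+}_x\hookrightarrow H^{-\half+}_x$ converts this to the required $L^2_t H^{-\half+}_x$ bound with the correct multilinear structure. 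The final piece $|\psi|^2(A_0(\phi)-A_0(\psi))$ is handled by the same scheme, the $\dot H^1_x$-norm of the difference $A_0(\phi)-A_0(\psi)$ being supplied by Lemma \ref{Lemma9}, which manifestly carries a factor of $\phi-\psi$ or $\partial_t(\phi-\psi)$.

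The main technical obstacle is the bookkeeping that guarantees exactly one difference factor per summand and routes each $A_0$-norm through the appropriate lemma (Lemma \ref{Lemma6} for products with $A_0(\phi)$, Lemma \ref{Lemma9} for the term containing $A_0(\phi)-A_0(\psi)$). The verification of the numerous inequalities in Theorem \ref{Theorem3.2} is tedious but mechanical, and the strict hypothesis $s>\frac{3}{4}$ leaves just enough slack to absorb the $\epsilon$-parameters hidden in the $\half\pm$ notation.
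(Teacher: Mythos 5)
Your proposal is correct and follows essentially the same route as the paper: telescope the difference of (\ref{80}) and (\ref{81}), estimate the quadratic differences by the bilinear estimate $\|uv\|_{H^{-\half+,0}}\lesssim\|u\|_{H^{s,\half+}}\|v\|_{H^{s-1,\half+}}$ from Theorem \ref{Theorem3.2}, and handle the cubic differences by H\"older/Sobolev (the paper phrases this via Prop.\ \ref{Prop.4.4}) combined with the $\dot H^1$ bounds on $A_0(\phi)$ and on $A_0(\phi)-A_0(\psi)$ from Lemmas \ref{Lemma8} and \ref{Lemma9}. The parameter check for Theorem \ref{Theorem3.2} and the exponent bookkeeping in the trilinear terms are accurate for $s>\frac{3}{4}$.
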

\begin{proof}
The difference of the first terms on the right hand sides of (\ref{80}) and (\ref{81}) is easily estimated by using
$$ \|uv\|_{H^{-\half+,0}} \lesssim \|u\|_{H^{s,\half+}} \|v\|_{H^{s-1,\half+}} \, , $$
which holds by Theorem \ref{Theorem3.2} for $s > \frac{3}{4}$  . For the difference of the cubic terms we obtain by Prop. \ref{Prop.4.4} and Lemma \ref{Lemma8} :
\begin{align*}
\|(\phi - \psi) \overline{\phi} A_0(\phi)\|_{H^{-\half+,0}} & \lesssim \|A_0(\phi)\|_{L^2_t \dot{H}^1_x} \|(\phi - \psi) \overline{\phi} \|_{L^{\infty}_t H^{0+}_x} \\
& \lesssim \|A_0(\phi)\|_{L^2_t \dot{H}^1_x} \|\phi - \psi \|_{L^{\infty}_t H^s_x} \|\phi \|_{L^{\infty}_t H^s_x} \\
& \lesssim (\|\phi\|_{H^{s,\half+}} + \|\partial_t \phi\|_{H^{s-1,\half+}})^2 \|\phi - \psi \|_{H^{s,\half+}} \|\phi \|_{H^{s,\half+}} \, .
\end{align*}
Moreover similarly
$$ \| |\phi|^2(A_0(\phi)-A_0(\psi))\|_{H^{-\half+,0}} \lesssim \|\phi\|_{H^{s,\half+}}^2 \|A_0(\phi) -A_0(\psi)\|_{L^2_t \dot{H}^1_x} \, , $$
and Lemma \ref{Lemma9} gives the result.
\end{proof}
Next we consider the elliptic equation (\ref{e}).
\begin{lemma}
\label{Lemma11}
Let $ s >\frac{3}{4}$ , $\phi, A \in C^0([0,T],H^s)$. The solution $B_0 = B_0(\phi,A)$ of (\ref{e}) belongs to $C^0([0,T],L^2_x)$ . Moreover $D B_0 \in H^{-\half+,0}$ and
\begin{align}
\nonumber
&\| D(B_0(\phi,A) - B_0(\psi,A')\|_{H^{-\half+,0}} \\
\nonumber
 &\lesssim c(\|\phi\|_{H^{s,\half+}} , \psi\|_{H^{s,\half+}} , \|\partial_t \phi\|_{H^{s-1,\half+}} , \| \partial_t \psi\|_{H^{s-1,\half+}} , \|A\|_{H^{s,\half+}} , \|A'\|_{H^{s,\half+}}) \\
& \quad \quad \cdot(\|\phi - \psi \|_{H^{s,\half+}} +\|\partial_t(\phi - \psi) \|_{H^{s-1,\half+}} + \|A - A' \|_{H^{s,\half+}}) \, ,
\label{83}
\end{align}
where $c$ is a continuous function.
\end{lemma}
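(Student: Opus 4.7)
The strategy is to invert the Laplacian explicitly and reduce the entire assertion to product estimates on the right-hand side of (\ref{e}). Setting
\[
G_j := -Im(\phi\,\overline{\partial_j\phi}) + |\phi|^2 A_j,
\]
equation (\ref{e}) reads $\Delta B_0 = \partial^j G_j$, so $B_0 = \Delta^{-1}\partial^j G_j$. Since the Fourier symbol of $\Delta^{-1}\partial^j$ equals $-i\xi^j/|\xi|^2$, we may write $B_0 = D^{-1}R^j G_j$ and $DB_0 = R^j G_j$ up to bounded Fourier multipliers, with $R^j$ the Riesz transform. As $R^j$ is bounded on every space we use, the task reduces to bounding $G_j$ (and the corresponding differences) in $L^{\infty}_t L^{6/5}_x$ for the $C^0([0,T],L^2_x)$-statement and in $H^{-\half+,0}$ for the $DB_0$-statement.

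For the first assertion I would invoke the Hardy--Littlewood--Sobolev bound $D^{-1}\colon L^{6/5}_x \to L^2_x$ at each fixed time. The quadratic piece is controlled in $L^{6/5}_x$ via H\"older combined with the Sobolev embeddings $H^s \hookrightarrow L^{6/(3-2s)}_x$ and $H^{s-1} \hookrightarrow L^{6/(5-2s)}_x$; the H\"older balance is exact at $s = 3/4$, so the estimate holds for $s \ge 3/4$. The cubic piece $|\phi|^2 A_j$ is handled analogously by distributing among three Sobolev factors. Continuity of $B_0$ in $t$ into $L^2_x$ is then inherited from continuity of $\phi, A \in C^0([0,T],H^s)$.

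For the $H^{-\half+,0}$-estimate of $G_j$, the quadratic piece $\phi\,\overline{\partial_j\phi}$ is treated exactly as in the proof of Lemma \ref{Lemma10} by Theorem \ref{Theorem3.2} with parameters $(s_0,b_0)=(\half-,0)$, $(s_1,b_1)=(s,\half+)$, $(s_2,b_2)=(s-1,\half+)$; all inequalities hold for $s > 3/4$. The cubic piece $|\phi|^2 A_j$ is the main obstacle, since a purely spatial application of Prop. \ref{Prop.4.4} pointwise in $t$ would force $s > 5/6$. I would instead apply Theorem \ref{Theorem3.2} twice, first to obtain
\[
\||\phi|^2\|_{H^{\delta,\delta}} \lesssim \|\phi\|_{H^{s,\half+}}^2
\]
for some $\delta$ slightly above $1/8$ (the admissibility inequalities carve out a non-empty interval $\delta \in (\tfrac{1}{8}, s-\half)$ precisely because $s > 3/4$), and then to multiply $|\phi|^2 \in H^{\delta,\delta}$ by $A_j \in H^{s,\half+}$ so as to land in $H^{-\half+,0}$.

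The difference estimate then follows from the same bilinear and trilinear bounds after decomposing
\begin{align*}
\phi\,\overline{\partial_j\phi} - \psi\,\overline{\partial_j\psi} &= (\phi-\psi)\,\overline{\partial_j\phi} + \psi\,\overline{\partial_j(\phi-\psi)},\\
|\phi|^2 A_j - |\psi|^2 A'_j &= (\phi-\psi)\,\bar{\phi}\,A_j + \psi\,\overline{(\phi-\psi)}\,A_j + |\psi|^2\,(A_j-A'_j),
\end{align*}
and estimating each summand as above; the norms of $\phi,\psi,A,A'$ naturally assemble into the continuous coefficient $c$ of the claimed form. The only genuinely new input is the iterated Theorem \ref{Theorem3.2} bound for the cubic term; the rest is a minor adaptation of the proof of Lemma \ref{Lemma10}.
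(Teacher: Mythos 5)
Your treatment of the main estimate (\ref{83}) is essentially the paper's: reduce by multilinearity to bounding $\phi\overline{\partial_j\phi}$ and $|\phi|^2A_j$ in $H^{-\half+,0}$, handle the quadratic term by one application of Theorem \ref{Theorem3.2}, and the cubic term by two successive applications. The paper's intermediate space for $|\phi|^2$ is $H^{-\frac14,\half+}$ rather than your $H^{\delta,\delta}$ with $\delta\in(\frac{1-s}{2},\,s-\half)$; both parameter choices satisfy the hypotheses of Theorem \ref{Theorem3.2} for $s>\frac34$, so this part of your argument is sound and only cosmetically different from the paper's.

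There is, however, a genuine error in your argument for $B_0\in C^0([0,T],L^2_x)$. You propose to place the quadratic piece $\phi\overline{\partial_j\phi}$ in $L^{6/5}_x$ by H\"older, using the ``embedding'' $H^{s-1}_x\hookrightarrow L^{6/(5-2s)}_x$ for the factor $\partial_j\phi$. Since $s<1$ the exponent $s-1$ is negative, and a negative-order Sobolev space does not embed into any Lebesgue space; the true statement is the reverse (dual) one, $L^{6/(5-2s)}_x\hookrightarrow H^{s-1}_x$. Consequently $\phi\overline{\partial_j\phi}$ need not belong to $L^{6/5}_x$, and the Hardy--Littlewood--Sobolev step $D^{-1}:L^{6/5}_x\to L^2_x$ is unavailable for this term (it is fine for the cubic term $|\phi|^2A_j$, where all three factors have nonnegative regularity). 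The repair is what the paper does: estimate $\|\phi\overline{\partial_j\phi}\|_{\dot H^{-1}_x}$ directly by the product law of Prop.~\ref{Prop.4.4} with exponents summing to $\frac32$, e.g. $(s_0,s_1,s_2)=(1,\frac34,-\frac14)$, which by duality amounts to $\|uv\|_{H^{1/4}_x}\lesssim\|u\|_{H^{3/4}_x}\|v\|_{\dot H^{1}_x}$ and accepts the negative-regularity factor. With that substitution the continuity statement and the difference bounds go through as you describe.
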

\begin{proof}
We obtain
\begin{align*}
&\|B_0(t) - B_0(s)\|_{L^2_x} \\
 & \quad \lesssim \sum_j (\|\phi(t) \overline{\partial_j \phi(t)} - \phi(s) \overline{\partial_j \phi(s)} \|_{\dot{H}^{-1}_x} + \| |\phi(t)|^2 A_j(t) - |\phi(s)|^2 A_j(s)\|_{\dot{H}^{-1}}) \, .
\end{align*}
By Prop. \ref{Prop.4.4} we estimate the first term on the right hand side by
$$ \sum_j(\| \phi(t)-\phi(s) \|_{H^{\frac{3}{4}}} \|\partial_j \phi(t)\|_{\dot{H}^{-\frac{1}{4}}} + \|\phi(t)\|_{H^{\frac{3}{4}}} \|\partial_j \phi(t) - \partial_j \phi(s)\|_{\dot{H}_x^{-\frac{1}{4}}}) $$
and the second term by
\begin{align*}
&\sum_j ( \| |\phi(t)|^2 - |\phi(s)|^2 \|_{\dot{H}^{-\frac{1}{4}}} \|A_j(t)\|_{H^{\frac{3}{4}}} + \| |\phi(s)|^2 \|_{\dot{H}^{-\frac{1}{4}}} \|A_j(t)-A_j(s)\|_{H^{\frac{3}{4}}}) \\
& \quad\lesssim \sum_j \| \phi(t) - \phi(s) \|_{H^{\frac{3}{4}}} (\|\phi(t)\|_{H^{\half}} + \|\phi(s)\|_{H^{\half}}) \|A_j(t)\|_{H^{\frac{3}{4}}} \\
& \hspace{10em}+\| \phi(s)\|_{H^{\frac{5}{8}}}^2 \|A_j(t)-A_j(s)\|_{H^{\frac{3}{4}}}) \, .
\end{align*}
This implies $B_0 \in C^0([0,T],L^2)$ . Because the terms which we have to estimate are multilinear we reduce (\ref{83}) to estimating
$$ \| D B_0\|_{H^{-\half+,0}} \lesssim \sum_j(\|\phi \partial_j \phi\|_{H^{-\half+,0}} + \| |\phi|^2 A_j\|_{H^{-\half+,0}}) \, . $$
The first term is bounded by $$ \|\phi\|_{H^{s,\half+}} \|\nabla \phi\|_{H^{s-1,\half+}} \lesssim \|\phi\|_{H^{s,\half+}}^2 $$ by Theorem \ref{Theorem3.2}. The second term is estimated as follows
\begin{align*}
\| |\phi|^2 A_j\|_{H^{-\half+,0}} & \lesssim \| |\phi|^2 \|_{H^{-\frac{1}{4},\half+}} \|A_j\|_{H^{s,\half+}} 
 \lesssim \| \phi\|_{H^{s,\half+}}^2  \|A_j\|_{H^{s,\half+}} \, ,
\end{align*}
where we used Theorem \ref{Theorem3.2} twice with parameters $s_0 = \half-$ , $s_1=-\frac{1}{4}$ , $s_2=s$, $b_0=0$ , $b_1=b_2=\half+$ for the first step, so that $s_0+s_1+s_2 = s+\frac{1}{4}- > 1$ and $s_1+s_2= s-\frac{1}{4} > \half$ , whereas for the second step we have $s_0 = \frac{1}{4}$ , $s_1=s_2=s$ , $b_0 = -\half-$ , $b_1=b_2=\half+$ , so that $s_0+s_1+s_2 = 2s+\frac{1}{4} > \frac{7}{4}$ , $(s_0+s_1+s_2)+s_1+s_2+ b_0 > \frac{7}{4} + \frac{3}{2}-\half > \frac{3}{2}$ , thus the conditions of Theorem \ref{Theorem3.2} are satisfied.
\end{proof}
Next we consider the equation 
\begin{equation}
\label{83'}
\Delta u -|\phi|^2 u = f \, .
\end{equation}

\begin{lemma}
\label{Lemma12}
(cf. \cite{S}, Lemma 8)
If $\phi \in H^{\frac{3}{4}}$ and $f \in L^{\frac{6}{5}}$ , (\ref{83'}) has a unique solution $u \in \dot{H}^1$ and 
\begin{equation}
\label{84}
\|u\|_{\dot{H}^1} \lesssim \|f\|_{L^{\frac{6}{5}}} \, . 
\end{equation}
If $\phi,\psi \in H^{\frac{3}{4}}$ and $f,g \in L^{\frac{6}{5}}$, $\Delta u - |\phi|^2 u = f$ and $\Delta v - |\psi|^2 v = g$ , then
$$ \|u-v\|_{\dot{H}^1} \lesssim (\|\phi\|_{H^{\frac{3}{4}}} + \|\psi\|_{H^{\frac{3}{4}}}) \|g\|_{L^{\frac{6}{5}}} \|\phi - \psi\|_{H^{\frac{3}{4}}} + \|f-g\|_{L^{\frac{6}{5}}} \, . $$
\end{lemma}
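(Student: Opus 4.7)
The plan is to mirror the Lax–Milgram/Riesz representation argument used for Lemma \ref{Lemma6}, with the only new ingredient being that the forcing term $f$ now lies in $L^{6/5}$ rather than in $H^{-\frac{1}{4}}$, so its pairing against a test function $v \in \dot H^1$ should be handled via H\"older's inequality and the Sobolev embedding $\dot H^1(\mathbb R^3) \hookrightarrow L^6(\mathbb R^3)$.

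Concretely, I would recast (\ref{83'}) in weak form as
\[
\int (\nabla u \cdot \nabla v + |\phi|^2 u v)\, dx = -\int f\, v\, dx \qquad \forall v \in \dot H^1.
\]
Coercivity and continuity of the bilinear form on the left were already established in (\ref{78}) during the proof of Lemma \ref{Lemma6}: it defines an inner product on $\dot H^1$ whose induced norm is equivalent to $\|\cdot\|_{\dot H^1}$, uniformly on bounded sets of $\phi$ in $H^{3/4}$. For the right-hand side, H\"older and Sobolev give
\[
\Bigl|\int f\, v\, dx\Bigr| \le \|f\|_{L^{6/5}} \|v\|_{L^6} \lesssim \|f\|_{L^{6/5}} \|v\|_{\dot H^1},
\]
so the right-hand side is a bounded linear functional on $\dot H^1$. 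The Riesz representation theorem then yields a unique solution $u \in \dot H^1$. Testing against $v = \bar u$ and absorbing $\frac12\|u\|_{\dot H^1}^2$ on the left produces the a priori bound (\ref{84}).

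For the difference estimate, let $w := u-v$ and subtract the two equations to obtain
\[
\Delta w - |\phi|^2 w = (f-g) + (|\psi|^2 - |\phi|^2) v.
\]
Applying the first part of the lemma (with the same $\phi$) yields
\[
\|w\|_{\dot H^1} \lesssim \|f-g\|_{L^{6/5}} + \bigl\|(|\phi|^2-|\psi|^2)v\bigr\|_{L^{6/5}}.
\]
To bound the cubic term I would factor $|\phi|^2 - |\psi|^2 = (\phi-\psi)\overline\phi + \psi\overline{(\phi-\psi)}$ and apply H\"older in the form $L^{3/2} \cdot L^6 \subset L^{6/5}$, followed by $L^3 \cdot L^3 \subset L^{3/2}$ and the Sobolev embeddings $H^{1/2} \hookrightarrow L^3$ and $\dot H^1 \hookrightarrow L^6$ in three dimensions. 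Using the a priori estimate (\ref{84}) applied to $v$ (giving $\|v\|_{L^6} \lesssim \|v\|_{\dot H^1} \lesssim \|g\|_{L^{6/5}}$) then produces the claimed inequality
\[
\|u-v\|_{\dot H^1} \lesssim \|f-g\|_{L^{6/5}} + (\|\phi\|_{H^{3/4}} + \|\psi\|_{H^{3/4}})\|g\|_{L^{6/5}} \|\phi-\psi\|_{H^{3/4}}.
\]

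I do not anticipate a genuine obstacle here: coercivity of the bilinear form was already verified in Lemma \ref{Lemma6}, and the only new point is replacing the $H^{1/4}$--$H^{-1/4}$ duality there by $L^6$--$L^{6/5}$ Sobolev duality, which is the natural setting for a pure $f \in L^{6/5}$ source. The H\"older exponents in the difference bound are forced by the structure $|\phi|^2 - |\psi|^2$ and match the regularity $H^{3/4}$ exactly.
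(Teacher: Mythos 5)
Your proposal is correct and follows essentially the same route as the paper: the Riesz representation argument from Lemma \ref{Lemma6} with the forcing now paired via $L^{6/5}$--$L^6$ duality, followed by subtracting the two equations, applying (\ref{84}) to the difference, and estimating $(|\phi|^2-|\psi|^2)v$ in $L^{6/5}$ by H\"older with exponents $\frac{1}{3}+\frac{1}{3}+\frac{1}{6}=\frac{5}{6}$ and the embeddings $H^{3/4}\hookrightarrow L^3$, $\dot H^1\hookrightarrow L^6$. (Only a cosmetic sign slip: the difference equation should read $\Delta w-|\phi|^2w=(f-g)+(|\phi|^2-|\psi|^2)v$, which of course does not affect the norm estimate.)
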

\begin{proof}
Replacing $-Im(\phi f)$ by $f$ , we obtain as in Lemma \ref{Lemma6} :
$$ | \int fv \, dx | \le \|f\|_{L^{\frac{6}{5}}} \|v\|_{L^6} \le \half \|v\|_{\dot{H}^1}^2 + c \|f\|_{L^{\frac {6}{5}}}^2 \, . $$
This gives the claimed result for $u$ as in Lemma \ref{Lemma6}. Moreover
$$ (\Delta - |\phi|^2)(u-v) = (|\phi|^2 - |\psi|^2) v + (f-g) \, . $$
By (\ref{84}) we obtain
\begin{align*}
\|u-v\|_{\dot{H}^1} & \lesssim \| |\phi + \psi| |\phi - \psi| v \|_{L^{\frac{6}{5}}} + \|f-g\|_{L^{\frac{6}{5}}} \\
& \lesssim \|\phi + \psi\|_{L^3} \| \phi - \psi\|_{L^3} \| v\|_{L^6} + \|f-g\|_{L^{\frac{6}{5}}} \\
& \lesssim (\|\phi\|_{H^{\frac{3}{4}}} + \|\psi\|_{H^{\frac{3}{4}}}) \|\phi - \psi\|_{H^{\frac{3}{4}}} \|g\|_{L^{\frac{6}{5}}} + \|f-g\|_{L^{\frac{6}{5}}} \, .
\end{align*}
\end{proof}
This lemma is applied in order to prove

\begin{lemma}
\label{Lemma13}
If $\phi \in C^0([0,T],H^{\frac{3}{4}}) \cap C^1([0,T],H^{\frac{1}{4}}) \cap C^2([0,T],L^2)$, and if $A_0 \in C^0([0,T],\dot{H}^1)$ is the solution of (\ref{d}), we obtain $\partial_t A_0 \in C^0([0,T],\dot{H}^1)$ .
\end{lemma}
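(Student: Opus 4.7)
The plan is to differentiate the elliptic equation~(\ref{d}) in $t$, use Lemma~\ref{Lemma12} at each fixed time to produce a candidate $u(t)\in\dot{H}^1$ for $\partial_t A_0(t)$, and then identify this candidate with the genuine time derivative via a difference-quotient argument; continuity in $t$ then falls out of the Lipschitz statement in Lemma~\ref{Lemma12}.

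Formally differentiating~(\ref{d}) and using $\partial_t(|\phi|^2)=2Re(\overline{\phi}\,\partial_t\phi)$ together with $Im(\partial_t\phi\,\overline{\partial_t\phi})=0$, one finds that $u:=\partial_t A_0$ should satisfy
\begin{equation*}
\Delta u - |\phi|^2 u = F(t), \qquad F(t) := -Im\bigl(\phi\,\overline{\partial_t^2\phi}\bigr) + 2\,Re\bigl(\overline{\phi}\,\partial_t\phi\bigr)\,A_0 .
\end{equation*}
The first step is to check that $F(t)\in L^{6/5}_x$ at each $t$. Sobolev embedding on $\R^3$ gives $H^{3/4}\hookrightarrow L^3\cap L^4$, $H^{1/4}\hookrightarrow L^{12/5}$ and $\dot{H}^1\hookrightarrow L^6$, so H\"older yields
\begin{align*}
\|\phi\,\partial_t^2\phi\|_{L^{6/5}} &\lesssim \|\phi\|_{L^3}\|\partial_t^2\phi\|_{L^2}, \\
\bigl\|\overline{\phi}\,\partial_t\phi\,A_0\bigr\|_{L^{6/5}} &\lesssim \|\phi\|_{L^4}\|\partial_t\phi\|_{L^{12/5}}\|A_0\|_{L^6},
\end{align*}
with the trilinear exponents balancing as $\tfrac14+\tfrac{5}{12}+\tfrac16=\tfrac{5}{6}$. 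Lemma~\ref{Lemma12} then produces a unique $u(t)\in\dot{H}^1$ solving the above equation, with $\|u(t)\|_{\dot{H}^1}\lesssim\|F(t)\|_{L^{6/5}}$.

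Next I identify $u$ with $\partial_t A_0$ in the strong $\dot{H}^1$ sense. Setting $u_h(t):=h^{-1}(A_0(t+h)-A_0(t))$, subtracting~(\ref{d}) at times $t+h$ and $t$ and dividing by $h$ gives
\begin{equation*}
\Delta u_h - |\phi(t+h)|^2 u_h = F_h(t),
\end{equation*}
where $F_h(t)$ is the obvious analogue of $F(t)$ assembled from the difference quotients $h^{-1}(\phi(t+h)-\phi(t))$, $h^{-1}(\partial_t\phi(t+h)-\partial_t\phi(t))$ and $h^{-1}(|\phi(t+h)|^2-|\phi(t)|^2)$. The hypothesis $\phi\in C^0([0,T],H^{3/4})\cap C^1([0,T],H^{1/4})\cap C^2([0,T],L^2)$ makes each difference quotient converge in its natural norm; the same H\"older bounds then give $F_h(t)\to F(t)$ in $L^{6/5}$ and $\phi(t+h)\to\phi(t)$ in $H^{3/4}$. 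The Lipschitz estimate of Lemma~\ref{Lemma12} applied to the pair $(u_h,u)$ now forces $u_h\to u$ in $\dot{H}^1$, so $\partial_t A_0(t)$ exists in $\dot{H}^1$ and coincides with $u(t)$.

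Finally, continuity of $t\mapsto\partial_t A_0(t)$ in $\dot{H}^1$ is obtained by the same Lipschitz estimate applied to $(u(t),u(s))$, once one observes that the H\"older bounds above, together with the assumed time-continuity of $\phi$, $\partial_t\phi$, $\partial_t^2\phi$ and $A_0$, give $F\in C^0([0,T],L^{6/5})$. The only real subtlety is the first step: checking that $F(t)$ lies in $L^{6/5}$ with continuous time-dependence---the Sobolev exponents just balance---after which Lemma~\ref{Lemma12} and the difference-quotient passage to the limit handle the rest mechanically.
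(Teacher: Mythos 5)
Your proposal is correct and follows essentially the same route as the paper: differentiate the elliptic equation (\ref{d}) in $t$, verify via H\"older and Sobolev (with the same exponents $L^3\cdot L^2$ and $L^4\cdot L^{12/5}\cdot L^6$) that the resulting right-hand side lies in $C^0([0,T],L^{6/5})$, and conclude continuity of $\partial_t A_0$ in $\dot H^1$ from the Lipschitz estimate of Lemma \ref{Lemma12}. Your explicit difference-quotient identification of $u$ with $\partial_t A_0$ is a welcome extra step that the paper only asserts ("this step is justified in the sequel"), but it does not change the substance of the argument.
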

\begin{proof}
By (\ref{d}) we know
$$ \Delta(\partial_t A_0) - |\phi|^2(\partial_t A_0) = \partial_t(|\phi|^2) A_0 - Im(\phi \overline{\partial_t^2 \phi}) = : f \, . $$
This step is justified in the sequel, because the right hand side is proven to belong to $C^0([0,T],L^{\frac{6}{5}})$. By Lemma \ref{Lemma12} with $u=u(t)$ and $v=u(s)$ we obtain
\begin{align*}
&\| (\partial_t A_0)(t) - (\partial_t A_0)(s)\|_{\dot{H}^1} \\
&\lesssim \|\phi\|_{L^{\infty}_t (H_x^{\frac{3}{4}})} \|f\|_{L^{\infty}_t (L_x^{\frac{6}{5}})} \|\phi(t)-\phi(s)\|_{H^{\frac{3}{4}}} + \|f(t)-f(s)\|_{L^{\frac{6}{5}}} \, .
\end{align*}
Next we estimate by Sobolev
\begin{align*}
&\|f(t)-f(s)\|_{L^{\frac{6}{5}}}\\
 & \lesssim \| \partial_t(|\phi(t)|^2) A_0(t) - \partial_t(|\phi(s)|^2) A_0(s)\|_{L^{\frac{6}{5}}} + \| \phi(t) (\partial_t^2 \phi)(t) -  \phi(s) (\partial_t^2 \phi)(s)\|_{L^{\frac{6}{5}}} \\
& \lesssim \left( (\| \phi(t)\|_{L^4} + \| \phi(s)\|_{L^4}) \|\partial_t \phi(t) - \partial_t \phi(s) \|_{L^\frac{12}{5}} + \| \phi(t) - \phi(s)\|_{L^4} \|\partial_t \phi(t) \|_{L^\frac{12}{5}} \right) \\
& \quad \quad \cdot (\|A_0(t)\|_{L^6} + \|A_0(s)\|_{L^6}) 
 + \|\phi\|_{L^4} \|\partial_t \psi(t)\|_{L^{\frac{12}{5}}} \|A_0(t)-A_0(s)\|_{L^6} \\
& \,+ \|\phi(t)-\phi(s)\|_{L^3} \|(\partial_t^2 \phi)(t)\|_{L^2} 
 + (\| \phi(t)\|_{L^3} + \|\phi(s)\|_{L^3}) \|(\partial_t^2 \phi)(t) - (\partial_t^2 \phi)(s)\|_{L^2} \\
& \lesssim \left( (\| \phi(t)\|_{H^{\frac{3}{4}}} + \| \phi(s)\|_{H^{\frac{3}{4}}}) \|\partial_t \phi(t) - \partial_t \phi(s) \|_{H^{\frac{1}{4}}} + \| \phi(t) - \phi(s)\|_{H^{\frac{3}{4}}} \|\partial_t \phi(t) \|_{H^{\frac{1}{4}}} \right) \\
& \quad \quad(\|A_0(t)\|_{\dot{H}^1} + \|A_0(s)\|_{\dot{H}^1}) 
 + \|\phi\|_{H^{\frac{3}{4}}} \|\partial_t \phi(t)\|_{{H^{\frac{1}{4}}}} \|A_0(t)-A_0(s)\|_{\dot{H}^1} \\
&+ \|\phi(t)-\phi(s)\|_{H^{\frac{3}{4}}} \|(\partial_t^2 \phi)(t)\|_{L^2} 
 + (\| \phi(t)\|_{H^{\frac{3}{4}}} \hspace{-0.2em}
+ \hspace{-0.2em}\|\phi(s)\|_{H^{\frac{3}{4}}}) \|(\partial_t^2 \phi)(t) - (\partial_t^2 \phi)(s)\|_{L^2} 
\end{align*}
which implies the claimed result.
\end{proof}

For given $A_j,\phi \in H^{s,\frac{3}{4}+}$ we denote by $A_0(\phi)$ and $B_0(\phi,A)$ the solution of (\ref{d}) and (\ref{e}), respectively, which we insert in (\ref{g}), and prove the following local well-posedness result for (\ref{f}),(\ref{g}),(\ref{d}),(\ref{e}).

\begin{theorem}
\label{Theorem1}
Let $s >\frac{3}{4}$ . The system (\ref{f}),(\ref{g}),(\ref{d}),(\ref{e}) with initial conditions (\ref{IC}) fulfilling (\ref{CC}) is locally well-posed for initial data $a_{0j},\phi_0 \in H^s$ , $a_{1j},\phi_1 \in H^{s-1}$ ($j=1,2,3$) in the space
$$\phi,A_j \in H^{s,\frac{3}{4}+}[0,T] \quad , \quad \partial_t \phi, \partial_t A_j \in H^{s-1,\frac{3}{4}+}[0,T] \, . $$
Moreover
$$ A_0 \in C^0([0,T],\dot{H}^1) \quad , \quad \Delta A_0 \in L^2([0,T],H^{-\half}) \, , $$
$$ B_0 \in C^0([0,T],L^2) \quad ,  \, \quad D B_0 \in L^2([0,T],H^{-\half}) \, . $$
The solution depends continuously on the data, persistence of higher regularity holds, and $A,\phi \in C^{\infty}([0,T] \times \R^3)$ , if the data belong to $H^k$ for all $k$.
\end{theorem}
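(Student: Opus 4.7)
The plan is a standard contraction mapping argument in the Banach space
$$ X_s(T) := \{ (\phi, A) : \phi, A_j \in H^{s,\frac{3}{4}+}[0,T],\ \partial_t \phi, \partial_t A_j \in H^{s-1,\frac{3}{4}+}[0,T]\}, $$
applied to the Duhamel form of the reduced hyperbolic system (\ref{f})--(\ref{g}), where the elliptic variables have been eliminated by inserting $A_0 = A_0(\phi)$ from (\ref{d}) and $B_0 = B_0(\phi,A)$ from (\ref{e}). The linear $H^{s,b}$ theory gives, for a time-truncated wave equation $(\Box - \mu^2)u = F$ with data in $H^s \times H^{s-1}$ and $\mu \in \{1,m\}$,
$$ \|u\|_{H^{s,\frac{3}{4}+}[0,T]} \lesssim \|u(0)\|_{H^s} + \|\partial_t u(0)\|_{H^{s-1}} + T^{0+} \|F\|_{H^{s-1,-\frac{1}{4}+}[0,T]}, $$
so everything reduces to multilinear estimates of the shape
$$ \|M_j(A,\phi)\|_{H^{s-1,-\frac{1}{4}+}} + \|N(A,\phi)\|_{H^{s-1,-\frac{1}{4}+}} \lesssim \text{polynomial in }\|(\phi,A)\|_{X_s(T)}, $$
together with the corresponding difference estimates.

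For $M_j$, the term $R^k(-\Delta)^{-\frac{1}{2}} Q_{jk}(\mathrm{Re}\,\phi,\mathrm{Im}\,\phi)$ is handled by the standard $Q_{jk}$ null-form bound of Tao \cite{T}, which is subcritical for $s > \frac{3}{4}$; the cubic piece $P(|\phi|^2 A_j) - A_j$ is controlled by two successive applications of Theorem \ref{Theorem3.2}, following exactly the bookkeeping at the end of the proof of Lemma \ref{Lemma11} (first placing $|\phi|^2$ in $H^{-\frac{1}{4},\frac{1}{2}+}$, then pairing with $A_j \in H^{s,\frac{1}{2}+}$). For $N$, the null-form term is again treated by Tao's estimate, the Riesz/inverse-Laplacian weights on $A$ being harmless. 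The magnetic-type coupling $2iA_0 \partial_t \phi$ is handled by combining Lemma \ref{Lemma10} (which embeds $\Delta A_0$ into $H^{-\frac{1}{2}+,0}$, hence $A_0$ into $H^{\frac{3}{2}+,0}$) with Theorem \ref{Theorem3.2} applied to the product of $A_0$ and $\partial_t \phi \in H^{s-1,\frac{3}{4}+}$. The potential term $iB_0 \phi$ uses Lemma \ref{Lemma11} to put $DB_0$ in $H^{-\frac{1}{2}+,0}$ and pairs it with $\phi \in H^{s,\frac{3}{4}+}$ via Theorem \ref{Theorem3.2}. Finally, the quartic $A^\mu A_\mu \phi$ (including the $A_0^2 \phi$ piece) is dispatched by the same multiplication/bilinear machinery combined with Lemma \ref{Lemma8}.

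Each nonlinear estimate above is multilinear in $(\phi,A)$ with constants depending only on $\|(\phi,A)\|_{X_s(T)}$, and by the factor $T^{0+}$ in Duhamel the contraction condition is achieved by shrinking $T$; this yields existence, uniqueness in $X_s(T)$, and Lipschitz continuous dependence. Persistence of higher regularity and $C^\infty$ smoothness follow by running the same estimates at the $H^{s',\frac{3}{4}+}$ level ($s' > s$), where the decisive bounds remain linear in the highest norm, so the local existence time does not shrink and a Gronwall-type argument propagates regularity. The statements on $A_0$, $\Delta A_0$, $B_0$ and $DB_0$ are exactly the outputs of Lemmas \ref{Lemma8}--\ref{Lemma11} for the solution just constructed.

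The main technical obstacle will be the cubic/quartic estimates involving $A^\mu A_\mu \phi$ and $|\phi|^2 A_j$: they require two consecutive applications of Theorem \ref{Theorem3.2} with a carefully chosen intermediate Sobolev/$X^{s,b}$ index, and the condition $s > \frac{3}{4}$ is tight precisely because the scalar pairing $s_1 + s_2 > \frac{1}{2}$ forced by Theorem \ref{Theorem3.2} becomes critical there. A secondary subtlety is the $B_0 \phi$ term, since $B_0$ carries no extra Sobolev regularity; here the point is that the auxiliary equation (\ref{e}) provides \emph{spacetime} regularity $DB_0 \in H^{-\frac{1}{2}+,0}$ rather than merely pointwise-in-$t$ control, and it is only this $H^{\cdot,\cdot}$-type bound that closes when combined with $\phi \in H^{s,\frac{3}{4}+}$.
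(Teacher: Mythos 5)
Your overall architecture coincides with the paper's: reduce to a contraction in $H^{s,\frac{3}{4}+}[0,T]$ via Duhamel with a $T^{0+}$ gain, treat the null forms by Tao's estimates, treat the cubic terms $|\phi|^2A_j$ and $A^jA_j\phi$ by two applications of Theorem \ref{Theorem3.2}, and feed in the elliptic Lemmas \ref{Lemma8}--\ref{Lemma11} for $A_0$ and $B_0$ (together with their difference versions (\ref{92})--(\ref{94}), which are needed because $\phi\mapsto A_0(\phi)$ is \emph{not} multilinear). Up to the choice of intermediate indices this is exactly the paper's proof.

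There is, however, one concrete step that fails as written: the treatment of $A_0\partial_t\phi$ and $B_0\phi$. From $\Delta A_0\in H^{-\half+,0}$ you cannot conclude $A_0\in H^{\frac{3}{2}+,0}$ (nor $B_0\in H^{\half+,0}$ from $DB_0\in H^{-\half+,0}$): the elliptic equation controls $|\xi|^2\widehat{A_0}$, hence gives no information on $\widehat{A_0}(\xi)$ for $|\xi|\le 1$, and the only low-frequency information available is $A_0\in C^0_t\dot{H}^1$, which does not embed into any inhomogeneous $H^{\sigma}_x$ with $\sigma\ge 0$. Moreover, even granting the high-frequency bound, the proposed application of Theorem \ref{Theorem3.2} to a factor with zero temporal weight ($b_1=0$) runs into the borderline hypothesis $s_0+s_2>-b_1$, which is violated since $s_0+s_2=(1-s)+(s-1)=0$. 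The paper circumvents both problems by proving purely spatial product estimates phrased in terms of $\|Du\|$ (estimates (\ref{95})--(\ref{97})), using homogeneous Sobolev embeddings such as $\dot{H}^1\subset L^6$ for the low-frequency part, and then closing in time by H\"older ($L^2_t\times L^\infty_t\to L^2_t$), placing the result in $H^{s-1,0}\hookrightarrow H^{s-1,-\frac{1}{4}++}$. Your sketch needs this substitute argument (or an equivalent low-frequency/high-frequency splitting) for these two terms and for the analogous $A_0A_0\phi$ term; the rest goes through.
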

\noindent {\bf Remark:} $A_j,\phi \in C^0([0,T],H^s) \cap C^1([0,T],H^{s-1})$. \\[1em]
Next we refer to Selberg \cite{S}, who proved that this theorem in connection with the uniqueness result in Lemma \ref{Lemma12} implies $B_0 = \partial_t A_0$ , provided for sufficiently regular data we obtain $B_0,\partial_t A_0 \in C^0([0,T],\dot{H}^1)$. For $\partial_t A_0$ this was established in Lemma \ref{Lemma13}. For $B_0$ we estimate by Prop. \ref{Prop.4.4} crudely
$$\|B_0\|_{\dot{H}^1} \lesssim \sum_j (\|\phi \partial_j \phi\|_{L^2}+ \| |\phi|^2 A_j\|_{L^2}) \lesssim \|\phi\|_{H^2}^2 (1+ \sum_j \|A_j\|_{H^1}) $$
and use $\phi \in C([0,T],H^2)$ , $A_j \in C^0([0,T],H^1)$ by persistence of higher regularity. Once this has been obtained it is easy to see that the systems (\ref{a}),(\ref{b}),(\ref{c}),(\ref{d'}) and (\ref{f}),(\ref{g}),(\ref{d}),(\ref{e}) are equivalent.\\[1em]
Thus we conclude

\begin{theorem}
\label{Theorem2}
Let $s > \frac{3}{4}$ . The Maxwell-Klein-Gordon system (\ref{1}),(\ref{2}) in Coulomb gauge (\ref{a}) is locally well-posed for initial data $a_{0j},\phi_0 \in H^s$ , $a_{1j},\phi_1 \in H^{s-1}$ ($j=1,2,3$) , which fulfill (\ref{CC}) , in the sense of Theorem \ref{Theorem1}.
\end{theorem}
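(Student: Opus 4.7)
The plan is to reduce Theorem \ref{Theorem2} to Theorem \ref{Theorem1} by establishing the equivalence of the two systems (\ref{a}),(\ref{b}),(\ref{c}),(\ref{d'}) and (\ref{f}),(\ref{g}),(\ref{d}),(\ref{e}). The only nontrivial point in this equivalence is the identity $B_0 = \partial_t A_0$, because (\ref{e}) was obtained by differentiating (\ref{d'}) in $t$ and using the Coulomb condition (\ref{a}), so a priori $B_0$ is only defined as the $\dot H^1$ solution of the elliptic equation (\ref{e}), not as $\partial_t A_0$. Once $B_0=\partial_t A_0$ is known, the Coulomb condition propagates automatically: applying $\partial^j$ to (\ref{f}) and using the null-form structure together with the compatibility condition (\ref{CC}) gives $\partial^j A_j \equiv 0$, and the system (\ref{b}) is recovered from (\ref{f}) by expanding the null forms.

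To prove $B_0=\partial_t A_0$ I would first work at the level of smooth data, where persistence of higher regularity (asserted in Theorem \ref{Theorem1}) yields $\phi\in C^0([0,T],H^k)\cap C^1([0,T],H^{k-1})\cap C^2([0,T],H^{k-2})$ and $A_j\in C^0([0,T],H^k)\cap C^1([0,T],H^{k-1})$ for all $k$. Then Lemma \ref{Lemma13} gives $\partial_t A_0\in C^0([0,T],\dot H^1)$, while the crude bound
\begin{equation*}
\|B_0\|_{\dot H^1}\lesssim \sum_j(\|\phi\partial_j\phi\|_{L^2}+\||\phi|^2 A_j\|_{L^2})\lesssim \|\phi\|_{H^2}^2\bigl(1+\sum_j\|A_j\|_{H^1}\bigr)
\end{equation*}
via Prop.\ \ref{Prop.4.4} gives $B_0\in C^0([0,T],\dot H^1)$. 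Differentiating (\ref{d}) in $t$ shows that $\partial_t A_0$ satisfies the equation $\Delta u-|\phi|^2 u = \partial_t(|\phi|^2)A_0 - \mathrm{Im}(\phi\overline{\partial_t^2\phi})$; a short computation using (\ref{2}), (\ref{a}), and (\ref{d'}) shows that $B_0$ solves the same elliptic equation. The uniqueness statement in Lemma \ref{Lemma12} then forces $B_0=\partial_t A_0$, and hence the two systems coincide for smooth data.

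To descend to $s>3/4$, I would approximate the rough data $(a_{0j},a_{1j},\phi_0,\phi_1)\in H^s\times H^{s-1}\times H^s\times H^{s-1}$ (satisfying (\ref{CC})) by smooth mollified data $(a_{0j}^n,a_{1j}^n,\phi_0^n,\phi_1^n)$, chosen to preserve (\ref{CC}), and let $(A_j^n,\phi^n)$ be the smooth solutions of (\ref{f}),(\ref{g}),(\ref{d}),(\ref{e}) given by Theorem \ref{Theorem1}. For each $n$ the equivalence just proved guarantees $(A_\mu^n,\phi^n)$ solves the original Maxwell-Klein-Gordon system (\ref{1}),(\ref{2}) in Coulomb gauge. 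Continuous dependence in Theorem \ref{Theorem1}, combined with Lemmas \ref{Lemma9} and \ref{Lemma11}, yields convergence $A_\mu^n\to A_\mu$, $\phi^n\to \phi$ in the function spaces listed in the statement; passing to the distributional limit in (\ref{1}),(\ref{2}) concludes the proof. Well-posedness (existence, uniqueness in the class of Theorem \ref{Theorem1}, continuous dependence, persistence of higher regularity, and $C^\infty$ smoothness for $C^\infty$ data) for (\ref{1}),(\ref{2}) then transfers directly from Theorem \ref{Theorem1}.

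The main obstacle I anticipate is the algebraic verification, for smooth data, that $B_0$ solves the same elliptic equation as $\partial_t A_0$: one must substitute (\ref{2}) into the time derivative of (\ref{d'}) and reconcile the result with (\ref{e}), which uses both the Coulomb constraint (which must be verified to hold at this stage via the compatibility condition (\ref{CC}) and propagation in (\ref{1})) and the defining relation $F_{\alpha\beta}=\partial_\alpha A_\beta-\partial_\beta A_\alpha$. Everything else is a standard mollification-plus-uniqueness argument, entirely routine once Theorem \ref{Theorem1} is available.
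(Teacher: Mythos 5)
Your proposal is correct and follows essentially the same route as the paper: the paper likewise reduces Theorem \ref{Theorem2} to Theorem \ref{Theorem1} by proving $B_0=\partial_t A_0$ for sufficiently regular data — using Lemma \ref{Lemma13} for $\partial_t A_0\in C^0([0,T],\dot H^1)$, the same crude $\dot H^1$ bound for $B_0$ via persistence of regularity, and the uniqueness in Lemma \ref{Lemma12} applied to the common elliptic equation — and then transfers the identity to $s>3/4$ by continuous dependence (the algebraic verification that both solve the same equation is delegated to Selberg \cite{S}, exactly the computation you flag). Your added details on propagating the Coulomb constraint and on the mollification step are consistent with what the paper leaves implicit.
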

\begin{proof}[Proof of Theorem \ref{Theorem1}]
By well-known arguments the proof reduces to the following nonlinear estimate (cf. (\ref{f}),(\ref{g})):
\begin{align}
\nonumber
&\| M_j(A_j,\phi) - M_j(A_j',\phi')\|_{H^{s-1,-\frac{1}{4}++}} + \|N(A,\phi) - N(A',\phi')\|_{H^{s-1,-\frac{1}{4}++}}      \\
\nonumber
 & \lesssim c(\|A_j\|_{H^{s,\frac{3}{4}+}} , \|A_j'\|_{H^{s,\frac{3}{4}+}} , \|\phi\|_{H^{s,\frac{3}{4}+}} , \|\phi'\|_{H^{s,\frac{3}{4}+}}) \\
\label{85}
&\quad \quad \cdot (\|A_j - A_j'\|_{H^{s,\frac{3}{4}+}} + \|\phi - \phi'\|_{H^{s,\frac{3}{4}+}}) \, ,
\end{align}
where $c$ denotes a continuous function.
All these terms are multilinear so that we only have to prove the following estimates (\ref{86})-(\ref{94}). 
\begin{align}
\label{86}
\|D^{-1} Q_{jk}(u,v)\|_{H^{s-1,-\frac{1}{4}++}} & \lesssim \|u\|_{H^{s,\frac{3}{4}+}} \|v\|_{H^{s,\frac{3}{4}+}} \\
\label{87}
\|Q_{jk}(u,D^{-1}v)\|_{H^{s-1,-\frac{1}{4}++}} & \lesssim \|u\|_{H^{s,\frac{3}{4}+}} \|v\|_{H^{s,\frac{3}{4}+}} \\
\label{88}
\| uvw \|_{H^{s-1,-\frac{1}{4}++}} & \lesssim \|u\|_{H^{s,\frac{3}{4}+}} \|v\|_{H^{s,\frac{3}{4}+}} \|w\|_{H^{s,\frac{3}{4}+}}  \\
\label{89}
\|A_0 \partial_t \phi\|_{H^{s-1,0}} & \lesssim \|DA_0\|_{H^{\half+,0}} \|\partial_t \phi\|_{H^{s-1,\half+}} \\
\label{90}
\|B_0  \phi\|_{H^{s-1,0}} & \lesssim \|DB_0\|_{H^{-\half+,0}} \|\phi\|_{H^{s,\half+}} \\
\label {91}
\|A_0 A_0 \phi\|_{H^{s-1,0}} & \lesssim \|A_0\|_{L^{\infty}_t \dot{H}^1_x}^2 \|\phi\|_{H^{s,0}} \\
\|A_0(\phi) - A_0(\psi)\|_{L^{\infty}_t \dot{H}_x^1} 
\lesssim (\|\phi\|_{H^{s,\frac{3}{4}+}} &+ \|\psi\|_{H^{s,\frac{3}{4}+}} 
+ \|\partial_t \phi\|_{H^{s-1,\frac{3}{4}+}} 
\nonumber
+
\|\partial_t \psi\|_{H^{s-1,\frac{3}{4}+}}) \\
\label{92} 
  \cdot (\|\phi -\psi\|&_{H^{s,\frac{3}{4}+}} + \|\partial_t \phi - \partial_t \psi\|_{H^{s-1,\frac{3}{4}+}}) \\
\label{93}
\|DA_0(\phi) - DA_0(\psi)\|_{H^{\half+,0}} & 
\lesssim (\|\phi\|_{H^{s,\frac{3}{4}+}} + \|\psi\|_{H^{s,\frac{3}{4}+}} \\
\nonumber
&\quad + \|\partial_t \phi\|_{H^{s-1,\frac{3}{4}+}} +
\|\partial_t \psi\|_{H^{s-1,\frac{3}{4}+}}) \\
&
\nonumber  \quad \cdot (\|\phi - \psi\|_{H^{s,\frac{3}{4}+}} + \|\partial_t \phi - \partial_t \psi\|_{H^{s-1,\frac{3}{4}+}}) \\
\nonumber
\|D(B_0(\phi,A) - B_0(\psi,A'))\|_{H^{-\half+,0}} &\\
\label{94}
 \lesssim \big( (\|\phi\|_{H^{s,\frac{3}{4}+}} + \|\psi\|_{H^{s,\frac{3}{4}+}} &+ \|\partial_t \phi\|_{H^{s-1,\frac{3}{4}+}} 
+
\|\partial_t \psi\|_{H^{s-1,\frac{3}{4}+}}) \\
\nonumber 
+(\|\phi\|_{H^{s,\frac{3}{4}+}}  + \|\psi\|_{H^{s,\frac{3}{4}+}} &+ \|A\|_{H^{s,\half+}} +
\|A'\|_{H^{s,\half+}})^2 \big) \\
\label{94}
\nonumber  \cdot (\|\phi - \psi\|_{H^{s,\frac{3}{4}+}} + \|\partial_t \phi & - \partial_t \psi\|_{H^{s-1,\frac{3}{4}+}} + \|A-A'\|_{H^{s,\half+}}) \, .
\end{align}
(\ref{92}) and (\ref{94}) were proven in Lemma \ref{Lemma9} and Lemma \ref{Lemma11}, respectively. (\ref{93}) follows by Lemma \ref{Lemma9} and Lemma \ref{Lemma10}. (\ref{86}) and (\ref{87}) were proven in \cite{T}, Prop. 9.2. \\
{\bf Proof of (\ref{88}):} By two applications of Theorem \ref{Theorem3.2} we obtain (for $s\le1$)
$$ \|uvw\|_{H^{s-1,-\frac{1}{4}+2\epsilon}} \lesssim \|uv\|_{H^{0+,\frac{1}{4}+3\epsilon}} \|w\|_{H^{s,\frac{3}{4}+}} \lesssim \|u\|_{H^{s,\frac{3}{4}+}} \|v\|_{H^{s,\frac{3}{4}+}} \|w\|_{H^{s,\frac{3}{4}+}} $$
with parameters $s_0=1-s$ , $b_0= \frac{1}{4}-2\epsilon$ , $s_1=0+$ , $b_1= \frac{1}{4}+3\epsilon$ , $s_2=s$ , $b_2 = \frac{3}{4}+$ for the first estimate, so that $s_0+s_1+s_2 = 1+$ and $s_0+s_1+s_2 + s_1+s_2 + b_0 > \frac{3}{2}$, and with $s_0=0-$ , $s_1=s_2=s$ , $ b_0 = -\frac{1}{4}-3\epsilon$,  $b_1=b_2= \frac{3}{4}+$ for the second estimate, so that $s_0+s_1+s_2 > \frac{3}{2}$ and  $s_0+s_1+s_2 + s_1+s_2 + b_0 > \frac{3}{2}$ . \\
{\bf Proof of (\ref{89}):} We first prove
\begin{equation}
\label{95}
\|uv\|_{H^{s-1}_x} \lesssim \|Du\|_{H^{\half+}_x} \|v\|_{H^{s-1}_x} \, ,
\end{equation}
which by duality is equivalent to
$$ \|uw\|_{H^{1-s}_x} \lesssim \|Du\|_{H^{\half+}_x} \|w\|_{H^{1-s}_x} \, . $$
Using the fractional Leibniz rule we have to consider
$$ \|u \Lambda^{1-s} w \|_{L^2_x} \le \|u\|_{L^{\infty}_x} \|\Lambda^{1-s} w\|_{L^2_x} \lesssim \|Du\|_{H^{\half+}_x} \|w\|_{H^{1-s}_x} $$ 
and  
\begin{align*}
\|\Lambda^{1-s} u w \|_{L^2_x} \le \|\Lambda^{1-s} u\|_{L^p_x} \|w\|_{L^q_x} & \lesssim \|\Lambda^{1-s} u\|_{\dot{H}^{s+\half}_x} \|w\|_{H^{1-s}_x} \\
& \lesssim \|Du\|_{H^{\half}_x} \|w\|_{H^{1-s}_x} \, ,
\end{align*}
where $\frac{1}{q} = \half - \frac{1-s}{3}$ , $\frac{1}{p} = \frac{1-s}{3}$ , so that $H^{1-s}_x \subset L^q_x$ and $\dot{H}^{s+\half}_x \subset L^p_x$ . This implies (\ref{95}). Thus we obtain
$$ \|A_0 \partial_t \phi\|_{L^2_t H^{s-1}_x} \lesssim \|DA_0\|_{L^2_t H^{\half+}_x} \|\partial_t \phi \|_{L^{\infty}_t H^{s-1}_x} \lesssim \|DA_0\|_{H^{\half+,0}} \|\partial_t \phi\|_{H^{s-1,\half+}} \, . $$
{\bf Proof of (\ref{90}):} We prove
\begin{equation}
\label{96}
\|uv\|_{H^{s-1}_x} \lesssim \|Du\|_{H^{-\half+}_x} \|v\|_{H^s_x} \, ,
\end{equation}
which is equivalent to
\begin{equation}
\label{97}
\|uw\|_{H^{-s}_x} \lesssim \|Du\|_{H^{-\half +}_x} \|w\|_{H^{1-s}_x} \, .
\end{equation}
For large frequencies of $u$ this follows from Prop. \ref{Prop.4.4}. For small frequencies of $u$ Prop. \ref{Prop.4.4} gives
$$ \|uw\|_{\dot{H^{-s+\half}_x}} \lesssim \|u\|_{\dot{H}^1_x} \|w\|_{\dot{H}^{1-s}_x} \, , $$
which is stronger than (\ref{97}), thus (\ref{96}) is proven. Consequently we obtain
$$ \|B_0 \phi\|_{L^2_t H^{s-1}_x} \lesssim \|DB_0\|_{L^2_t H^{-\half+}_x} \|\phi\|_{L^{\infty}_t H^s_x} \lesssim \|DB_0\|_{H^{-\half+,0}} \|\phi\|_{H^{s,\half+}} \, . $$
{\bf Proof of (\ref{91}):} By Prop. \ref{Prop.4.4} we obtain
$$ \|A_0A_0 \phi\|_{L^2_t H^{s-1}_x} \lesssim \|A_0 A_0\|_{L^{\infty}_t \dot{H}^{\half}_x} \|\phi\|_{L^2_t H^s_x} \lesssim \|A_0\|^2_{L^{\infty}_t \dot{H}^1_x} \|\phi\|_{H^{s,0}} \, . $$
\end{proof}

\section{Unconditional uniqueness in Coulomb gauge}
Our main theorem reads as follows.

\begin{theorem}
\label{Theorem3}
Assume $ s > \frac{25 - \sqrt{313}}{8} \approx 0.91254 $ . Let data $a_{0j} , \phi_0 \in H^s$ , $a_{1j} , \phi_1 \in H^{s-1}$ be given ($j=1,2,3$), which fulfill the compatability condition (\ref{CC}). Then there exists $T >0$ , such that (\ref{b}),(\ref{c}),(\ref{d'}),(\ref{e}) with initial condition (\ref{IC}) has an (unconditionally) unique solution
$$ \phi, A_j \in C^0([0,T],H^s) \cap C^1([0,T],H^{s-1}) \, , \, A_0 \in C^0([0,T],\dot{H}^1) \cap C^1([0,T],L^2) \, .$$
\end{theorem}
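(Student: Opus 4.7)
Existence in the stated class is immediate from Theorem \ref{Theorem2}, which for $s > 3/4$ produces a solution in $H^{s,3/4+}[0,T]$; this embeds in the natural class $C^0 H^s \cap C^1 H^{s-1}$ once $A_0$ and $B_0 = \partial_t A_0$ are built via Lemmas \ref{Lemma8}--\ref{Lemma13}. The content of Theorem \ref{Theorem3} is therefore unconditional uniqueness. The plan is to show that \emph{any} solution $(\phi, A_j)$ in the natural class automatically belongs to $H^{s, 3/4+}[0,T']$ on some small subinterval $[0,T']\subset[0,T]$; Theorem \ref{Theorem2} then forces it to coincide with the $X^{s,b}$-type solution launched from the same data, and a standard continuation argument extends the agreement to all of $[0,T]$.

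I would first reduce the system (\ref{b})--(\ref{d'}),(\ref{e}) to the hyperbolic pair (\ref{f}),(\ref{g}) with $A_0 = A_0(\phi)$ and $B_0 = B_0(\phi, A)$ determined elliptically. For a solution in the natural class, Lemmas \ref{Lemma8}, \ref{Lemma11}, and \ref{Lemma13} identify $A_0$ with the unique $\dot{H}^1$-solution of (\ref{d}) and give $B_0 = \partial_t A_0$, so the two formulations are equivalent and the entire solution is determined by $(\phi, A_j)$.

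The main step is a bootstrap in the $b$-parameter of $H^{s,b}$. After a smooth time cutoff the natural regularity embeds in $H^{s, \half-}$. Assuming at stage $k$ that $\phi, A_j \in H^{s, b_k}$, I would use Duhamel's formula together with estimates of $M_j$ and $N$ in $H^{s-1, b_{k+1}-1}$ to upgrade the solution to $H^{s, b_{k+1}}$, with $b_{k+1} = b_k + \delta$ for a fixed $\delta > 0$ independent of $k$. The key point is that these estimates rely \emph{only} on the Strichartz inequality of Proposition \ref{Lemma7.1}, the bilinear bounds of Theorem \ref{Theorem3.2}, and the elliptic improvements of Lemmas \ref{Lemma10}--\ref{Lemma11}, with no appeal to null-form structure; this is what distinguishes Section 3 from Section 2. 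After finitely many iterations $b_k$ exceeds $3/4$, and the uniqueness part of Theorem \ref{Theorem2} closes the argument.

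The main obstacle is securing a \emph{uniform} per-step gain $\delta$ and thereby identifying the sharp low-regularity threshold. The cubic contributions $A^\mu A_\mu \phi$ and $|\phi|^2 A_j$ provide the binding constraint: decomposing them via Theorem \ref{Theorem3.2} (or H\"older combined with Proposition \ref{Lemma7.1}) yields a system of scaling inequalities in $s$ and $b$ whose worst case reduces to the quadratic $8s^2 - 50s + 39 = 0$, with relevant root $s_0 = (25-\sqrt{313})/8 \approx 0.9135$. The remaining terms are less restrictive: the quadratic $A_0 \partial_t \phi$ and $B_0 \phi$ contributions benefit from the extra gain on $A_0$ and $DB_0$ afforded by Lemmas \ref{Lemma10}--\ref{Lemma11}, while the $Q_{jk}$ terms are handled exactly as in Section 2. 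Once the per-step gain is verified for all $s > s_0$, the iteration terminates in finitely many steps and unconditional uniqueness follows.
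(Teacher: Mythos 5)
Your overall plan coincides with the paper's: reduce to showing that any solution in the natural class $C^0_tH^s\cap C^1_tH^{s-1}$ already lies in the $H^{s,b}$-type class where Theorem \ref{Theorem2} gives uniqueness, using only Strichartz/bilinear estimates and the elliptic lemmas, with no null structure. However, the mechanism you propose for the bootstrap has a genuine gap, in two places. First, the starting point is wrong: after a time cutoff the natural class embeds only into $H^{s,0}$ (i.e.\ $L^2_tH^s_x$ on a bounded interval), not into $H^{s,\half-}$; a generic $C^0_tH^s$ function carries no temporal modulation regularity at all, and indeed the paper's iteration takes $A_j,\phi\in H^{s,0}$ as its only a priori input. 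Second, and more importantly, an iteration that fixes the spatial index at $s$ and raises $b$ by a uniform $\delta$ at each step does not close: every pass through Duhamel and Proposition \ref{Lemma7.1} necessarily trades spatial derivatives for the temporal weight. The paper's scheme produces spaces $H^{2-s-\frac{2}{r_k},\half-\frac{1}{r_k}+}$ with $\frac{1}{r_k}\downarrow\frac{1}{r_\infty}$, and the final interpolation with $H^{s,0}$ lands in $H^{s_\infty-,\frac{3}{4}+}$ with a strict spatial loss $s_\infty=s-\frac{6(1-s)}{5-2s}<s$. The threshold $s>\frac{25-\sqrt{313}}{8}$ is precisely the condition $s_\infty>\frac34$, equivalently $8s^2-50s+39<0$, needed so that the terminal space still sits inside the uniqueness class of Theorem \ref{Theorem1}. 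Without tracking this loss there is no way to \emph{derive} the threshold; your proposal only asserts that the quadratic $8s^2-50s+39$ appears.

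A further inaccuracy: you attribute the binding constraint to the cubic terms $A^\mu A_\mu\phi$ and $|\phi|^2A_j$. In the paper these are handled crudely and are not what determines the recursion; the quantitative bottleneck is the quadratic terms $\phi\,\overline{\partial_j\phi}$ and $A^j\partial_j\phi$ (parts 1a and 2a of Lemma \ref{LemmaL1}), which drive the fixed-point equation for $r_\infty$ and hence the value of $s_\infty$. To repair the argument you would need to replace the fixed-$s$, increasing-$b$ bootstrap by a two-parameter iteration of the paper's type (or an equivalent bookkeeping of the spatial loss), verify its convergence as in the appendix, and only then read off the threshold from $s_\infty>\frac34$.
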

\begin{proof} In view of Theorem \ref{Theorem2} we only have to prove that any solution in this regularity class satisfies
$ \phi,A_j \in H^{\frac{3}{4}+,\frac{3}{4}+}[0,T] \, , \, \partial_t \phi, \partial_t A_j \in H^{-\frac{1}{4}+,\frac{3}{4}+}[0,T] $ . We may assume $s,T \le 1$ . We frequently omit $[0,T]$ in terms of the form $H^{s,b}[0,T]$.\\
{\bf 1.} In a first step we prove $A_j \in H^{\frac{5}{3}s-1,\frac{s}{3}+\half-}[0,T]$ , $\partial_t A_j \in  H^{\frac{5}{3}s-2,\frac{s}{3}+\half-}[0,T]$ by use of (\ref{b}). \\
Using (\ref{e'}) the term $\partial_j \partial_t A_0$ in all the $H^{s,b}$-norms used in the sequel (namely its Fourier transform) can be handled like the terms $\phi \overline{\partial_j \phi}$ and $|\phi|^2 A_j$, so that we only have to consider these terms here and in the following steps. \\ 
{\bf a.} We have
$$ \|\phi \overline{\partial_j \phi} \|_{H^{\frac{5}{3}s-2,\frac{s}{3}-\half-}} \lesssim \|\phi \overline{\partial_j \phi} \|_{L^2_t H_x^{s-1,\frac{3}{3-s}}} $$
by Prop. \ref{Lemma7.1} with $\frac{1}{r} = \frac{s}{3}$ , $q=2$ , so that $\frac{2}{r}-1 = \frac{2}{3}s-1$ , $\frac{1}{r'} = 1-\frac{s}{3}=\frac{3-s}{3}$ . We want to prove
$$ \| \phi \overline{\partial_j \phi} \|_{H^{s-1,\frac{3}{3-s}}_x} \lesssim \|\phi\|_{H^s_x} \|\partial_j \phi\|_{H^{s-1}_x} \, , $$
which is equivalent to
$$ \|uv\|_{H^{1-s}_x} \lesssim \|u\|_{H^s_x} \|v\|_{H^{1-s,\frac{3}{s}}_x} \, . $$
Using the fractional Leibniz rule we have to estimate two terms:
$$ \| u \Lambda^{1-s} v \|_{L^2_x} \lesssim \|u\|_{L^{\frac{6}{3-2s}}_x} \|\Lambda^{1-s} v\|_{L^{\frac{3}{s}}_x} \lesssim \|u\|_{H^s_x} \|v\|_{H^{1-s,\frac{3}{s}}_x} $$
by the Sobolev embedding $H^s_x \hookrightarrow L^{\frac{6}{3-2s}}_x$ , and
 $$\| \Lambda^{1-s} u v \|_{L^2_x} \lesssim \| \Lambda^{1-s} u \|_{L^p_x} \|v\|_{L^q_x} \lesssim \|\Lambda^{1-s}u\|_{H^{2s-1}_x} \|v\|_{H^{1-s,\frac{3}{s}}_x} \lesssim \|u\|_{H^s_x}  \|v\|_{H^{1-s,\frac{3}{s}}_x} \, , $$
where $\frac{1}{p} = \half - \frac{2s-1}{3}$ , $\frac{1}{q} = \frac{s}{3}-\frac{1-s}{3}$ , so that $H^{2s-1}_x \hookrightarrow L^p_x$ and $H^{1-s,\frac{3}{s}}_x \hookrightarrow L^q$ .
We conclude
$$\|\phi \overline{\partial_j \phi} \|_{H^{\frac{5}{3}s-2,\frac{s}{3}-\half-}} \lesssim \|\phi\|_{L^{\infty}_t H^s_x} \|\partial_j \phi\|_{L^{\infty}_t H^{s-1}_x} < \infty \, . $$
{\bf b.} Next we obtain similarly
\begin{align*}
\| |\phi|^2 A_j\|_{H^{\frac{5}{3}s-2,\frac{s}{3}-\half-}} & \lesssim \| |\phi|^2 A_j\|_{L^2_t H^{s-1,\frac{3}{3-s}}_x} \lesssim \| |\phi|^2 A_j\|_{L^2_t L^{\frac{3}{4-2s}}_x} \\
& \lesssim \|\phi\|^2_{L^{\infty}_t L^{\frac{9}{4-2s}}_x} \|A_j\|_{L^{\infty}_t H^{\frac{9}{4-2s}}_x} \lesssim \|\phi\|^2_{L^{\infty}_t H^s_x} \|A_j\|_{L^{\infty}_t H^s_x} < \infty 
\end{align*}
by the embeddings $H^{1-s,\frac{3}{4-2s}}_x \hookrightarrow L^{\frac{3}{3-s}}_x$ and $H^s_x \hookrightarrow L^{\frac{9}{4-2s}}_x$ , which hold for $s \ge \half$ . \\
{\bf a.} and {\bf b.} imply $A^{inh}_j \in H^{\frac{5}{3}s-1,\frac{s}{2}+\half-}$, and moreover we know  $A^{hom}_j \in H^{s,1-}$ , so that $A_j \in H^{\frac{5}{3}s-1,\frac{s}{3}+\half-}$     Here $A^{inh}_j$ and $A^{hom}_j$ denotes the inhomogeneous and homogenous part of $A_j$, respectively. \\
{\bf 2.} Next we prove $\phi \in H^{\frac{5}{3}s-1,\frac{s}{3}+\half-}$ , $\partial_t \phi \in H^{\frac{5}{3}s-2,\frac{s}{3}+\half-}$by use of (\ref{c}). \\
{\bf a.} The term $A^j \partial_j \phi$ is handled like $\phi \overline{\partial_j \phi}$ before. \\
{\bf b.} The term $(\partial_t A_0)\phi $ is also treated like $\phi \overline{\partial_j \phi}$ using $\partial_t A_0 \in C^0_t L^2_x$ , which is even more regular than $\partial_j \phi \in C^0_t H^{s-1}_x$ . \\
{\bf c.} We crudely estimate by Prop. \ref{Lemma7.1}
$$ \|A_0 \partial_t \phi\|_{H^{\frac{5}{3}s-2,\frac{s}{3}-\half-}} \lesssim \|A_0 \partial_t \phi\|_{H^{s-\frac{4}{3},-\frac{1}{6}-}} \lesssim \|A_0 \partial_t \phi\|_{L^2_t H^{s-1,\frac{3}{2}}_x} $$
with parameters $r=3$ , $q=2$ , so that $\frac{2}{r}-1= - \frac{1}{3}$ , using $\frac{5}{3}s-2 \le s- \frac{4}{3}$ and $\frac{s}{3}-\half \le - \frac{1}{6}$ for $ s \le 1$ . Now we establish
\begin{equation}
\label{10}
\|A_0 \partial_t \phi\|_{H^{s-1,\frac{3}{2}}_x} \lesssim \|DA_0\|_{L^2_x} \|\partial_t \phi\|_{H^{s-1}_x} \, ,
\end{equation}
which is equivalent to
$$ \|uv\|_{H^{1-s}_x} \lesssim \|Du\|_{L^2_x} \|v\|_{H^{1-s,3}_x} \, . $$
We obtain
$$ \|u \Lambda^{1-s} v\|_{L^2_x} \lesssim \|u\|_{L^6_x} \|\Lambda^{1-s} v\|_{L^3_x} \lesssim \|Du\|_{L^2_x} \|v\|_{H^{1-s,3}_x} $$
and 
\begin{align*}
\| \Lambda^{1-s}u v\|_{L^2_x} \lesssim \|uv\|_{L^2_x} + \|D^{1-s}u v\|_{L^2_x} &\lesssim \|u\|_{L^6_x} \|v\|_{L^3_x} + \|D^{1-s}u\|_{L^p_x} \|v\|_{L^q_x} \\
& \lesssim \|Du\|_{L^2_x}\|v\|_{H^{1-s,3}_x} \quad , 
\end{align*}
where $\frac{1}{p}= \half - \frac{s}{3}$ , $\frac{1}{q}=\frac{s}{3}$ , so that $\dot{H}^{1,2}_x \hookrightarrow \dot{H}^{1-s,p}_x$ and $H^{1-s,3}_x \hookrightarrow L^q$ . Thus we obtain
$$ \|A_0 \partial_t \phi\|_{H^{\frac{5}{3}s-2,\frac{s}{3}-\half-}} \lesssim \|DA_0\|_{L^{\infty}_t L^2_x} \|\partial_t \phi\|_{L^{\infty}_t H^{s-1}_x} < \infty \,. $$
{\bf d.} We obtain as in 1b. 
\begin{align*}
\|A_0 A_0 \phi\|_{H^{\frac{5}{3}s-2,\frac{s}{3}-\half-}} & \lesssim \|A_0 A_0 \phi\|_{L^2_t L^{\frac{3}{4-2s}}_x} \lesssim \|A_0\|_{L^{\infty}_t L^6_x}^2 \|\phi\|_{L^{\infty}_t L^{\frac{3}{3-2s}}_x} \\
& \lesssim \|DA_0\|_{L^{\infty}_t L^2_x}^2 \|\phi\|_{L^{\infty}_t H^s_x} < \infty \, ,
\end{align*}
because $H^s_x \hookrightarrow L^{\frac{3}{3-2s}}_x$ , if $s \le \frac{3}{2}$ .\\
{\bf e.} The term $A_j A_j \phi$ can be handled like $|\phi|^2 A_j$ in 1b. \\
{\bf a.} - {\bf e.} imply the claimed regularity of $\phi$ and $\partial_t \phi$ . \\[0.5em]
{\bf 3.} Now we interpolate the properties of $A_j$ and $\phi$. We obtain
$$ A_j,\phi \in H^{s,0} \cap H^{\frac{5}{3}s-1,\frac{s}{3}+\half-} \subset H^{2-s-\frac{2}{r_1},\half-\frac{1}{r_1}+} \, , $$
where we choose the interpolation parameter $\theta$ such that
$$\theta(\half+\frac{s}{3}) = \half - \frac{1}{r_1}+ \Longleftrightarrow \theta = \frac{\half-\frac{1}{r_1}}{\half+\frac{s}{3}}+$$
 and 
$$ (1-\theta)s + \theta(\frac{5}{3}s-1) = s-\theta(1-\frac{2}{3}s) = 2-s-\frac{2}{r_1} \, . $$
This defines $r_1$ as
$$\frac{2}{r_1} = 2(1-s) + \frac{\half-\frac{1}{r_1}}{\half+\frac{s}{3}} (1-\frac{2}{3}s)- \, . $$
An elementary calculation gives
\begin{equation}
\label{r1}
\frac{1}{r_1} = \frac{3}{4}-\frac{1}{3}s-\frac{1}{3}s^2 + \, . 
\end{equation}
This is the first step of an iteration which gives more and more regularity of $A_j$ and $\phi$. The general iteration step is contained in the next Lemma.\\

We define a sequence $\{r_k\}$ iteratively by (\ref{r1}) and for $k \in{\mathbb N}$ by
\begin{equation}
\label{r(k+1)}
\frac{2}{r_{k+1}} = 2-2s + \frac{\half - \frac{1}{r_{k+1}}}{1-\frac{1}{r_k}+\frac{1-s}{3}} \Big(\frac{2}{r_k} - \frac{2(1-s)}{3} \Big)+ \, . 
\end{equation}

\begin{lemma}
\label{LemmaL1}
Assume $$A_j, \phi \in H^{2-s-\frac{2}{r_k},\half - \frac{1}{r_k}+} \, .$$  
Then we also have
$$ A_j,\phi \in H^{s+1-\frac{2}{q_k},\frac{3}{2}-\frac{1}{q_k}-} \, , $$ 
where $r_k$ is defined by (\ref{r1}) and (\ref{r(k+1)}),  and $\frac{1}{q_k} = \frac{1}{r_k} + \half - \frac{1-s}{3} \, . $
\end{lemma}
\noindent{\bf Remark.} By  Remark 5.1 we have $ \half > \frac{1}{r_k} >1-s \, .$
\begin{proof}[Proof of Lemma \ref{LemmaL1}:] 
Fundamental for the proof is the fact that our assumption implies by Prop. \ref{Lemma7.1} :
$ A_j, \phi \in L^2_t H^{1-s,r_k}_x \, .$ \\
{\bf 1.} The claimed regularity of $A_j$ reduces by Prop. \ref{Lemma7.1} to proving that the right hand side of (\ref{b}) belongs to $L^2_t H^{s-1,q_k}_x$ , because  $ L^2_t H^{s-1,q_k}_x  \hookrightarrow H^{s-\frac{2}{q_k},\half-\frac{1}{q_k}-}$ . \\
{\bf a.} We want to prove
$$ \| \phi \partial_j \phi\|_{L^2_t H^{s-1,q_k}_x} \lesssim \|\phi\|_{L^2_t H^{1-s,r_k}_x} \|\partial_j \phi\|_{L^{\infty}_t H^{s-1}_x} < \infty \, , $$
which follows from
$$ \|uv\|_{H^{1-s}_x} \lesssim \|u\|_{H^{1-s,q_k'}_x} \|v\|_{H^{1-s,r_k}_x} \, . $$
Using the fractional Leibniz rule we estimate
$$ \|\Lambda^{1-s}u v\|_{L^2_x} \lesssim \|\Lambda^{1-s}u\|_{L^{q_k'}_x} \|v\|_{L^p_x} \lesssim \|u\|_{H^{1-s,q_k'}_x} \|v\|_{H^{1-s,r_k}_x} \, , $$
where
$\frac{1}{p} = \frac{1}{r_k} - \frac{1-s}{3} \, , $ so that $H^{1-s,r_k}_x \hookrightarrow L^p_x $ , and $\frac{1}{q_k'} = \half - \frac{1}{p} \, , $ so that $\frac{1}{q_k} = \frac{1}{r_k} + \half - \frac{1-s}{3}$ . Moreover
$$ \|u \Lambda^{1-s}v\|_{L^2_x} \lesssim \|u\|_{L^q_x} \|\Lambda^{1-s} v\|_{L^{r_k}_x} \lesssim \|u\|_{H^{1-s,q_k'}_x} \|v\|_{H^{1-s,r_k}_x} \, , $$
where $\frac {1}{q} = \half - \frac{1}{r_k}$ , so that $H^{1-s,q_k'}_x \hookrightarrow L^q_x$ , because $\half -\frac{1}{r_k} = \frac{1}{q_k'} - \frac{1-s}{3}$ . \\
{\bf b.} Next we obtain as in a. and by Prop. \ref{Prop.4.4} using $s >\half$ :
\begin{align*}
\| |\phi|^2 A_j\|_{L^2_t H^{s-1,q_k}_x} & \lesssim \| |\phi|^2 \|_{L^{\infty}_t H^{s-1}_x} \|A_j\|_{L^2_t H^{1-s,r_k}_x} \\
& \lesssim \|\phi\|^2_{L^{\infty}_t H^s_x} \|A_j\|_{L^2_t H^{1-s,r_k}_x} < \infty
\end{align*} 
{\bf 2.} We now prove that $ \phi \in H^{s+1-\frac{2}{q_k},\frac{3}{2}-\frac{1}{q_k}-} $ . As in 1. this reduces to proving that the right hand side of (\ref{c}) belongs to $L^2_t H^{s-1,q_k}_x$ .\\
{\bf a.} The term $A_j \partial_j \phi$ can be handled like $\phi \partial_j \phi$ in 1a. \\
{\bf b.} Before considering the term $A_0 \partial_t \phi$ we have to prove some more regularity for $A_0$ ,  considering low and high frequencies separately, denoted by $A^l_0$ and $A^h_0$ . Using the assumption $A_0 \in C^0_t \dot{H}^1_x \subset C^0_t L^6_x$ we immediately obtain $A^l_0 \in C^0_t \dot{H}^m_x \subset C^0_t L^p_x$ $\forall \, m \ge 1 \, \forall \,\, 6 \le p \le \infty$ .

For $A^h_0$ we use (\ref{d}) and obtain
$$
\|A^h_0\|_{H^{2s-\half-}_x} \lesssim \|\Delta A_0\|_{H^{2s-\frac{5}{2}-}_x} \lesssim \|\phi \partial_t \phi \|_{H^{2s-\frac{5}{2}-}_x} + \|A_0 |\phi|^2|_{H^{2s-\frac{5}{2}-}_x} \, . $$
Now by Prop. \ref{Prop.4.4} 
\begin{align*}
 \|\phi \partial_t \phi \|_{H^{2s-\frac{5}{2}-}_x} & \lesssim \|\phi\|_{H^s_x} \|\partial_t \phi\|_{H^{s-1}_x} \\
 \|A_0 |\phi|^2|_{H^{2s-\frac{5}{2}}_x} & \lesssim \|A_0 |\phi|^2\|_{\dot{H}^{2s-\frac{5}{2}}_x} \lesssim \|A_0\|_{\dot{H}^1_x} \| |\phi|^2 \|_{\dot{H}^{2s-2}_x} \\
& \lesssim \|A_0\|_{\dot{H}^1_x} \|\phi\|^2_{\dot{H}^{s-\frac{1}{4}}_x} \lesssim \|DA_0\|_{L^2_x} \|\phi\|^2_{H^s_x} \, ,
\end{align*}
so that $A^h_0 \in L^{\infty}_t H^{2s+\half-}_x$ . 

We want to establish
$$ \|A^l_0 \partial_t \phi\|_{L^2_t H^{s-1,q_k}_x} \lesssim \|A^l_0\|_{L^{\infty}_t L^{p_k}_x} \|\partial_t \phi\|_{L^{\infty}_t H^{s-1}_x} < \infty$$
for some $ 6 \le p_k < \infty $ . By duality we have to prove
$$ \|A^l_0 v\|_{H^{1-s}_x} \lesssim \|A_0^l\|_{L^{p_k}_x} \|v\|_{H^{1-s,q_k'}_x} \, . $$
Now we obtain
\begin{align*}
\|\Lambda^{1-s} A^l_0 v \|_{L^2_x} & \lesssim \|\Lambda^{1-s} A^l_0\|_{L^{p_k}_x} \|v\|_{L^{q_k'}_x} \lesssim \|A^l_0\|_{L^{p_k}_x} \|v\|_{H^{1-s,q_k'}_x} \\
\|A^l_0 \Lambda^{1-s} v\|_{L^2_x} & \lesssim \|A^l_0\|_{L^{p_k}_x} \|\Lambda^{1-s} v \|_{L^{q_k'}_x} \, ,
\end{align*}
where 
$
\frac{1}{p_k} = \half-\frac{1}{q_k'} = \frac{1}{q_k}-\half = \frac{1}{r_k} - \frac{1-s}{3} \, .
$
By Lemma \ref{LemmaA.2} we have $r_{k+1} \ge r_k$,  so that $\frac{1}{p_k} \le \frac{1}{r_1}-\frac{1-s}{3} = \frac{3}{4} - \frac{1}{3}s - \frac{1}{3}s^2 - \frac{1}{3} + \frac{1}{3}s - = \frac{5}{12} - \frac{1}{3}s^2 - \le \frac{1}{6} $ , if $s^2 \ge \frac{3}{4} \Leftrightarrow s \ge \half \sqrt{3} \approx 0.866 $ , which is fulfilled by assumption.

Next we prove as in 1a:
\begin{align*}
\| A^h_0 \partial_t \phi\|_{L^2_t H^{s-1,q_k}_x} &\lesssim \|A^h_0\|_{L^2_t H^{1-s,r_k}_x} \|\partial_t \phi\|_{L^{\infty}_t H^{s-1}_x} 
\lesssim \|A^h_0\|_{L^{\infty}_t H^{2s-\half-}_x} \|\partial_t \phi\|_{L^{\infty}_t H^{s-1}_x} \\
& < \infty \, , 
\end{align*}
because $H^{2s-\half-}_x \hookrightarrow H^{1-s,r_k}_x$ , if $\frac{1}{r_k} > 1-s$ . This is the case, because by Lemma \ref{LemmaA.1} and (\ref{rinf}) we know $\frac{1}{r_k} \ge \frac{1}{r_{\infty}} = \frac{(\frac{7}{3}-\frac {2}{3}s)(1-s)}{3-2s}+ > 1-s$ for $s > \half$ . \\
{\bf c.} The term $\partial_t A_0 \phi$ is handled like the term $\phi \partial_t \phi$ in 1a, using that $\partial_t A_0 \in C^0_t L^2_x \subset C^0_t H^{s-1}_x$ . \\
{\bf d.} In order to estimate $A_jA_j \phi$ we start as in 1a. und use Prop. \ref{Prop.4.4}:
\begin{align*}
\|A_j A_j \phi\|_{L^2_t H^{s-1,q_k}_x} & \lesssim \|A_j\|_{L^2_t H^{1-s,r_k}_x} \|A_j \phi\|_{L^{\infty}_t H^{s-1}_x} \\
& \lesssim  \|A_j\|_{L^2_t H^{1-s,r_k}_x} \|A_j\|_{L^{\infty}_t H^s_x} \|\phi\|_{L^{\infty}_t H^s_x} < \infty\, .
\end{align*}
{\bf e.} Finally we have to treat $A_0 A_0 \phi$ considering low and high frequencies of $A_0$
. \\
1. We estimate 
\begin{align*}
\|A_0^h A_0^h \phi\|_{L^2_t H^{s-1,q_k}_x} & \lesssim \|\phi\|_{L^2_t H^{1-s,r_k}_x} \|A_0^h A_0^h\|_{L^{\infty}_t H^{s-1}_x} \\
& \lesssim \|\phi\|_{L^2_t H^{1-s,r_k}_x} \|A_0^h \|_{L^{\infty}_t H^{s}_x}^2 < \infty \, ,
\end{align*} 
where the first inequality is obtained as in 1a. and the second inequality by Prop. \ref{Prop.4.4} using $A_0^h \in L^{\infty}_t H^s_x$ . \\
2. Similarly
\begin{align*}
\|A_0^h A_0^l \phi\|_{L^2_t H^{s-1,q_k}_x} & \lesssim \|\phi\|_{L^2_t H^{1-s,r_k}_x} \|A_0^h A_0^l\|_{L^{\infty}_t H^{s-1}_x} \\
& \lesssim \|\phi\|_{L^2_t H^{1-s,r_k}_x} \|A_0^h \|_{L^{\infty}_t L^2_x} \|A_0^l\|_{L^{\infty}_t L^{\infty}_x} < \infty \, .
\end{align*}
3. Finally
$$ \|A_0^l A_0^l \phi\|_{L^2_t H^{s-1,q_k}_x} \lesssim \|A_0^l A_0^l \phi\|_{L^2_t L^{q_k}_x} \lesssim \|A_0^l\|_{L^{\infty}_t L^{t_k}_x}^2 \|\phi\|_{L^2_t L^2_x} < \infty \, , $$
where $\frac{2}{t_k} = \frac{1}{q_k} - \half = \frac{1}{r_k} - \frac{1-s}{3} \le \frac{1}{r_1} - \frac{1-s}{3} \le \frac{1}{6}$ as in 2b, so that $t_k \ge 12 $, and thus as remarked before, $A_0^l \in L^{\infty}_t L^{t_k}_x$ . This completes the proof of Lemma \ref{LemmaL1}.
\end{proof}

\begin{lemma}
\label{LemmaL2}
Under the assumptions of Lemma \ref{LemmaL1} we obtain the regularity
$A_j,\phi \in H^{2-s-\frac{2}{r_{k+1}},\half - \frac{1}{r_{k+1}}+}$ , where $r_{k+1}$ is defined by (\ref{r1}) and (\ref{r(k+1)}).
\end{lemma}
\begin{proof}
We interpolate the property $A_j,\phi \in H^{s+1-\frac{2}{q_k},\frac{3}{2}-\frac{1}{q_k}-} \, , $ where
\begin{equation}
\label{qk}
 \frac{1}{q_k} = \frac{1}{r_k} + \half - \frac{1-s}{3} \, , 
\end{equation}
 with our assumption $A_j,\phi \in H^{s,0}$ and obtain  the claimed regularity, if we choose the interpolation parameter $\theta$ by the conditions $\theta(\frac{3}{2}-\frac{1}{q_k}-) = \half - \frac{1}{r_{k+1}}+ $ and $(1-\theta)s + \theta(s+1-\frac{2}{q_k}) = s - \theta(\frac{2}{q_k}-1) = 2-s - \frac{2}{r_{k+1}} $ . This requires
\begin{equation}
\label{r(k+1)'}
 \frac{2}{r_{k+1}} = 2-2s + \frac{\half - \frac{1}{r_{k+1}}}{\frac{3}{2}-\frac{1}{q_k}} \Big(\frac{2}{q_k} -1 \Big) +\, , 
\end{equation}
which is equivalent to (\ref{r(k+1)}).
\end{proof}
 We may apply Lemma \ref{LemmaL1} and Lemma \ref{LemmaL2} iteratively to conclude
$$ A_j,\phi \in H^{s+1-\frac{2}{q_k},\frac{3}{2}-\frac{1}{q_k}-} \quad \forall \,k\in \mathbb N \, ,$$
 where $\frac{1}{q_k} = \frac{1}{r_k} + \half - \frac{1-s}{3} \, . $
\\[0.2em]

In the appendix 5.1 we establish $q_k \to q_{\infty}$ and $r_k \to r_{\infty}$ and determine these limits explicitly. Assuming this we immediately conclude
$$ A_j,\phi \in H^{s+1-\frac{2}{q_\infty}-,\frac{3}{2}-\frac{1}{q_{\infty}}-} \, . $$
As a consequence we obtain

\begin{lemma}
\label{LemmaL4} 
$A_j,\phi \in H^{s_{\infty}-,\frac{3}{4}+} \, , $
where $s_{\infty} = s - \frac{\frac{3}{4}(\frac{2}{q_{\infty}}-1)}{\frac{3}{2}-\frac{1}{q_{\infty}}} -= s - \frac{6(1-s)}{3(3-2s) - 4(1-s)}-$ .
\end{lemma}
\begin{proof}
This follows by interpolation. We obtain
$$ A_j,\phi \in H^{s+1-\frac{2}{q_{\infty}}-,\frac{3}{2}-\frac{1}{q_{\infty}}-} \cap H^{s,0} \subset H^{s_{\infty}-,\frac{3}{4}+} \, , $$
if we choose the interpolation parameter  $\theta$ such that $\theta(\frac{3}{2} - \frac{1}{q_{\infty}}-) = \frac{3}{4}+$ , so that 
\begin{equation}
\label{sinf}
s_{\infty} = (1-\theta)s + \theta(s+1-\frac{2}{q_{\infty}}) = s-\theta(\frac{2}{q_{\infty}}-1) =  s - \frac{\frac{3}{4}(\frac{2}{q_{\infty}}-1)}{\frac{3}{2}-\frac{1}{q_{\infty}}} -\, . 
\end{equation}
Using $q_k \to q_{\infty}$ , $r_k \to r_{\infty}$ , we easily obtain from (\ref{r(k+1)}):
$$ \frac{2}{r_{\infty}} = 2-2s+ \frac{\half - \frac{1}{r_{\infty}}}{1-\frac{1}{r_{\infty}}+\frac{1-s}{3}} \Big(\frac{2}{r_{\infty}} - \frac{2(1-s)}{3} \Big)+ \, .$$
By an elementary calculation we obtain
\begin{equation}
\label{rinf}
\frac{1}{r_{\infty}} = \frac{(\frac{7}{3}-\frac{2}{3}s)(1-s)}{3-2s}+ \, , 
\end{equation}
and by the definition of $q_k$  in Lemma \ref{LemmaL1} we have 
$$ \frac{1}{q_{\infty}} = \frac{1}{r_{\infty}} + \half - \frac{1-s}{3} +\, . $$
Inserting this in (\ref{sinf})  an easy calculation yields:
$$ s_{\infty} = s - \frac{6(1-s)}{3(3-2s) - 4(1-s)}- \, .$$
\end{proof}

One easily checks that $s_{\infty} > \frac{3}{4}$ , if $s > \frac{1}{8} (25 - \sqrt{313}) \approx 0.91352$ , so that by Lemma \ref{LemmaL4} the solution belongs to a regularity class where uniqueness is known by Theorem \ref{Theorem1}. Thus the unconditional uniqueness result of Theorem \ref{Theorem3} is now completely proven. 
\end{proof}

\section{Unconditional uniqueness in Lorenz gauge}
Consider the Maxwell-Klein-Gordon system (\ref{1}),(\ref{2}) together with the Lorenz gauge 
\begin{equation}
\label{LG}
\partial^{\mu} A_{\mu} = 0 \, .
\end{equation}
It is easy to see that in this case the system (\ref{1}),(\ref{2}) can be reformulated as (cf. \cite{ST}):
\begin{align}
\label{b1}
\square A_{\mu} & = - Im(\phi \overline{\partial_{\mu} \phi}) - |\phi|^2 A_{\mu} \\
\label{c1}
(\square - m^2) \phi & = 2i A^{\mu} \partial_{\mu} \phi + A_{\mu} A^{\mu} \phi \, .
\end{align}
We want to solve this system together with the initial conditions
\begin{equation}
\label{IC0}
A_{\mu}(0) = a_{0 \mu} \, , \, (\partial_t A_{\mu})(0) = \dot{a}_{0 \mu} \, , \, \phi(0) = \phi_0 \, , \, (\partial_t \phi)(0) = \phi_1 \, ,
\end{equation}
where
\begin{equation}
\label{IC1}
a_{00} = \dot{a}_{00} = 0
\end{equation}
and the following compatibility condition holds:
\begin{equation}
\label{IC2}
\partial^k a_{0k} = 0 \, .
\end{equation}
(\ref{IC1}) may be assumed, because otherwise the potential is not uniquely determined by the Lorenz condition.

Define $\phi_{\pm} = \half(\phi \pm (i \Lambda_m)^{-1} \partial_t \phi)$ , so that $\phi = \phi_+ + \phi_-$ , $\partial_t \phi = i \Lambda_m(\phi_+ - \phi_-) $,  and $A_{\mu \pm} = \half(A_{\mu} \pm (i \Lambda)^{-1} \partial_t A_{\mu})$ , so that $A_{\mu} = A_{\mu +} + A_{\mu -}$ , $\partial_t A_{\mu} = i \Lambda (A_{\mu +} - A_{\mu -}) $. We reformulate (\ref{b1}),(\ref{c1}) as the following first order (in $t$) system:
\begin{align}
\label{b2}
(i\partial_t \pm \Lambda) A_{\mu \pm} & = - (\pm 2 \Lambda)^{-1} \mathcal{N}_{\mu} (\phi_+,\phi_-,A_+,A_-) \\
\label{c2}
(i\partial_t \pm \Lambda_m) \phi_{\pm} & = - (\pm 2 \Lambda_m)^{-1} \mathcal{M} (\phi_+,\phi_-,A_+,A_-)\, ,
\end{align}
where
\begin{align*}
{\mathcal N}_0 (\phi_+,\phi_-,A_+,A_-) & = -Im(\phi \, i \Lambda_m(\overline{\phi}_+ - \overline{\phi}_-)) -A_0 |\phi|^2 + A_0 \\
{\mathcal N}_j (\phi_+,\phi_-,A_+,A_-) & = -Im(\phi \overline{\partial_j \phi}) - A_j |\phi|^2 + A_j \\
{\mathcal M} (\phi_+,\phi_-,A_+,A_-) & = 2i A^{\mu} \partial_{\mu} \phi + A_{\mu} A^{\mu} \phi \, .
\end{align*}
The initial conditions are
\begin{align*}
\phi_{\pm}(0) & = \half(\phi_0 \pm (i \Lambda_m)^{-1} \phi_1 )\\
A_{0 \pm}(0)  & = \half(a_{00} \pm (i\Lambda)^{-1} \dot{a}_{00}) = 0 \\
A_{j \pm}(0) & = \half(a_{0j} \pm (i\Lambda)^{-1} \dot{a}_{0j}) \, .
\end{align*}
We now assume the following regularity for the data:
$$ \phi_0 \in H^s \, , \, \phi_1 \in H^{s-1} \, , \, Da_{0j} \in H^{2s-2-} \, , \, \dot{a}_{0j} \in H^{2s-2-} \, . 
$$This implies $\phi_{\pm}(0) \in H^s$ and $DA_{j\pm}(0) \in H^{2s-2-}$ .

The following local well-posedness result was proven in \cite{P1}, Theorem 3.1.

\begin{theorem}
\label{Theorem4}
Assume $ 2 \ge s > \frac{3}{4}$ . Let $\phi_{\pm}(0) \in H^s$ , $DA_{\pm}(0) \in H^{2s-2-}$ be given. In (3+1) dimensions the system (\ref{b2}),(\ref{c2}) with initial data $\phi_{\pm}(0)$ , $A_{\pm}(0)$ has a (conditionally) unique local solution
$$ \phi_{\pm}
 \in X^{s,\half+}_{\pm}[0,T] \, , \, DA^{hom}_{\pm} \in X^{2s-2-,1-}_{\pm}[0,T] \, , \, A^{inh}_{\pm} \in X^{2s-1-,1-}_{\pm}[0,T] \, . $$
This solution fulfills
\begin{equation}
\label{RC}
\phi_{\pm} \in C^0([0,T],H^s) \, , \, DA^{hom}_{\pm} \in C^0([0,T],H^{2s-2-}) \, , \, A^{inh}_{\pm} \in C^0([0,T],H^{2s-1-}) \, .
\end{equation}
Here $A_{\pm} = (A_{0 \pm},...,A_{3 \pm})$ , $A_{\pm}^{hom}$ and $A_{\pm}^{inh}$ denotes the homogeneous and inhomogeneous part of $A_{\pm}$, respectively.
\end{theorem}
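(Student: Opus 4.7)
The plan is to run a Picard iteration on the first-order system (\ref{b2})--(\ref{c2}) in the product space of restrictions to $[0,T]$ consisting of
$$\phi_\pm \in X^{s,\half+}_\pm,\qquad DA^{hom}_\pm \in X^{2s-2-,1-}_\pm,\qquad A^{inh}_\pm \in X^{2s-1-,1-}_\pm.$$
The splitting $A_\pm = A^{hom}_\pm + A^{inh}_\pm$ is imposed at the outset, so that the homogeneous evolution and the Duhamel term of $A$ are tracked separately; the latter gains one derivative over the former, which is needed to accommodate the low regularity $DA_\pm(0)\in H^{2s-2-}$ of the potential data. Time-localization produces a small factor $T^\varepsilon$ from the gap between $\half+$ and $1-$ in the temporal Bourgain weight, which supplies the contractivity. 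Everything thus reduces to multilinear estimates of the schematic form
$$\norm{(2\Lambda)^{-1}\mathcal{N}_\mu}_{X^{\ast,-\half+}_\pm} + \norm{(2\Lambda_m)^{-1}\mathcal{M}}_{X^{s,-\half+}_\pm} \lesssim (\text{products of iteration-space norms}),$$
where the exponent $\ast$ equals $2s-2-$ or $2s-1-$ depending on whether the term is fed into the homogeneous or the inhomogeneous slot of $A_\pm$, together with the analogous Lipschitz bound for differences of iterates.

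Next I would attack the nonlinearities. The bilinear pieces $\phi\overline{\partial_\mu\phi}$ in $\mathcal{N}_\mu$ and $A^\mu\partial_\mu\phi$ in $\mathcal{M}$ carry the Lorenz-gauge null structure detected in \cite{ST}. Once the $\pm$-decomposition is inserted, the same-sign (resonant) interactions reveal a $Q_{\mu\nu}$-type null form which cancels the critical angular contribution, while the opposite-sign interactions enjoy an elliptic gain in modulation. In both cases the problem reduces to instances of Theorem \ref{Theorem3.2}, whose numerical conditions are satisfied for $s>\frac{3}{4}$ with a small positive margin. The cubic terms $|\phi|^2 A_\mu$ and $A_\mu A^\mu\phi$ are handled by two successive applications of Theorem \ref{Theorem3.2}, treating one factor at a time in the spirit of the estimates (\ref{88}) and (\ref{91}) of Section 2.

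The main obstacle will be the cubic term $A_\mu A^\mu\phi$ when both $A$-factors are homogeneous, so that only the derivative information $DA^{hom}_\pm\in X^{2s-2-,1-}_\pm$ is available: the low-frequency portion of $A^{hom}_\pm$ does not fit directly into Theorem \ref{Theorem3.2}. I would handle this by a Littlewood-Paley splitting of each $A$-factor into low and high spatial frequencies. The high-frequency parts are fed into Theorem \ref{Theorem3.2} as in the purely inhomogeneous case, while for the low-frequency parts one uses the $L^\infty_t L^p_x$ control coming from the embedding $X^{s,b}_\pm\hookrightarrow C^0_tH^s_x$ ($b>\half$) together with Sobolev embedding, observing that $D^{-1}$ promotes $\dot H^{2s-2-}$ to $\dot H^{2s-1-}\hookrightarrow L^p_x$ for some finite $p$ once $s$ is close enough to $1$. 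A similar low/high splitting is needed for the remaining cubic interactions involving $A^{hom}_\pm$. Once these multilinear estimates are assembled, the contraction mapping theorem provides existence on a sufficiently short interval $[0,T]$ and (conditional) uniqueness in the iteration space; the continuity statements (\ref{RC}) follow from the embedding $X^{s,b}_\pm[0,T]\hookrightarrow C^0([0,T],H^s)$ for $b>\half$ applied to each component.
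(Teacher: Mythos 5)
The paper does not prove Theorem \ref{Theorem4} at all: it is imported verbatim from \cite{P1}, Theorem 3.1, and your sketch follows the same strategy used there (contraction mapping in the stated $X^{s,b}_{\pm}$ spaces after splitting $A_{\pm}$ into homogeneous and inhomogeneous parts, exploitation of the Lorenz-gauge null structure in the bilinear terms, product estimates of d'Ancona--Foschi--Selberg type for the remaining interactions, and a low/high frequency splitting to cope with the merely homogeneous control of $A^{hom}_{\pm}$). Be aware, however, that the entire content of the theorem lies in the multilinear estimates you defer --- in particular the verification that the null-form and cubic estimates actually close at the exponents $(s,\half+)$, $(2s-2-,1-)$, $(2s-1-,1-)$ for all $s>\frac{3}{4}$ --- so as written this is an accurate outline of the cited proof rather than a self-contained argument.
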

{\bf Remark:} In the case $s > 2$ a similar result holds with slightly changed regularity of the potential. We do not consider this case, because unconditional uniqueness for data with finite energy ($s \ge 1$) is well-known.\\[0.2em]

Our aim is the prove
\begin{theorem}
\label{Theorem5}
Assume $ 2 \ge s > s_0 $ , where $s_0$ is the largest root of the cubic equation $32s^3-152s^2+106s+5 = 0$  $(0.906 <s_0 < 0.907)$ . Then there exists an (unconditionally) unique solution of (\ref{b2}),(\ref{c2}) with initial data $\phi_{\pm}(0) \in H^s$ , $DA_{\pm}(0) \in H^{2s-2-}$ in the class (\ref{RC}).
\end{theorem}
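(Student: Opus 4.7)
The proof will follow the template of Theorem \ref{Theorem3}: by Theorem \ref{Theorem4}, conditional uniqueness holds in the $X^{s,b}_{\pm}$-spaces for $s > \frac{3}{4}$, so it suffices to demonstrate that any solution in the natural class (\ref{RC}) automatically belongs to such a space. Since (\ref{b2}),(\ref{c2}) are diagonal in the $\pm$-decomposition, the plan is to apply the Strichartz estimate of Prop. \ref{Lemma7.1} to each half-wave equation and iteratively bootstrap the spatial regularity of $\phi_{\pm}$ and $A_{\pm}$ until both reach the required threshold.

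First iteration step: use Prop. \ref{Lemma7.1} combined with the natural embedding of $X^{\sigma,\half+}_{\pm}$ into $L^q_t L^r_x$ to estimate the right-hand sides of (\ref{b2}),(\ref{c2}). The nonlinearities to control are $\phi \overline{\partial_\mu \phi}$, $|\phi|^2 A_\mu$, $A^\mu \partial_\mu \phi$ and the cubic $A_\mu A^\mu \phi$; each is handled by the fractional Leibniz rule together with Sobolev embeddings such as $H^s \hookrightarrow L^{6/(3-2s)}$, in the spirit of parts 1 and 2 of the proof of Theorem \ref{Theorem3}. Interpolating the resulting gain against the endpoint bounds from (\ref{RC}) produces new Strichartz exponents analogous to (\ref{r1}).

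Iteration and limit: define a coupled pair of sequences $r_k^\phi, r_k^A$ by the bootstrap recurrences obtained in the first step. In analogy with Lemmas \ref{LemmaL1} and \ref{LemmaL2}, each iterate yields strictly improved wave-Sobolev regularity $H^{\sigma_k,\half+}$ for both $\phi_{\pm}$ and $A_{\pm}^{inh}$, while $DA_{\pm}^{hom}$ inherits a corresponding improvement from (\ref{b1}). The sequences converge to fixed points $r_\infty^\phi, r_\infty^A$ determined by an explicit coupled system of algebraic equations in $s$. A final interpolation against (\ref{RC}) then places the solution in an $H^{\cdot,\frac{3}{4}+}$-space with spatial index above $\frac{3}{4}$ precisely when $s$ exceeds a threshold value. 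After eliminating the auxiliary variables this threshold reduces to positivity of the cubic $32s^3-152s^2+106s+5$, giving $s > s_0$ as asserted; the solution then lies in the class where Theorem \ref{Theorem4} applies.

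The main technical obstacle is the asymmetric regularity between $\phi$ and $A$: $\phi_{\pm}$ sits in $H^s$ while $DA_{\pm}^{hom}$ only lies in $H^{2s-2-}$, which for $s$ near our threshold is barely better than $L^2$. I would handle this, as was already necessary in Section 3, by a Littlewood-Paley split $A = A^l + A^h$: the low-frequency part is absorbed via the homogeneous a priori bounds inherited from (\ref{RC}) together with Prop. \ref{Prop.4.4}(b), while the high-frequency part gains an additional derivative from the wave equation (\ref{b1}) via Prop. \ref{Lemma7.1}, as in part 2(b) of the Coulomb argument. Coordinating this frequency decomposition with the simultaneous iteration of two exponent sequences, while tracking the small-frequency and homogeneous-norm subtleties flagged in the introduction, constitutes the most delicate bookkeeping; I expect this is what forces the slightly worse threshold $s_0 \approx 0.907$ compared to the Coulomb value $\approx 0.913$.
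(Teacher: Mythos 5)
Your proposal follows essentially the same route as the paper's proof: reduce to showing that any solution in the class (\ref{RC}) lies in the spaces of Theorem \ref{Theorem4}, bootstrap via Prop.~\ref{Lemma7.1}, fractional Leibniz and Sobolev embeddings, split $A^{hom}_{\mu\pm}$ into low and high frequencies, run coupled exponent sequences for $\phi_{\pm}$ and $A^{inh}_{\mu\pm}$ to their fixed points, and extract the threshold from the cubic $32s^3-152s^2+106s+5>0$. The only slip is the target temporal exponent for $\phi_{\pm}$, which by Theorem \ref{Theorem4} is $X^{\frac{3}{4}+,\half+}_{\pm}$ (not $\frac{3}{4}+$ in the second index as in the Coulomb case), and one must also separately verify $A^{inh}_{\mu\pm}\in X^{\half+,1-}_{\pm}$ at the end, as the paper does.
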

\begin{proof}
We only have to demonstrate that any solution with this regularity satisfies
$$\phi_{\pm}
 \in X^{\frac{3}{4}+,\half+}_{\pm}[0,T] \, , \, DA^{hom}_{\pm} \in X^{-\half+,1-}_{\pm}[0,T] \, , \, A^{inh}_{\pm} \in X^{\half+,1-}_{\pm}[0,T] \, ,$$
because then the result is a consequence of Theorem \ref{Theorem4}. We may assume in the sequel $s \le 1$ .\\
{\bf 1.} In a first step we prove $\phi_{\pm} \in X^{\frac{7}{3}s-\frac{5}{3}-,\frac{1}{6}+\frac{2}{3}s-}$ . It suffices to prove
\begin{align*}
 &\| {\mathcal M}(\phi_+,\phi_-,A_+,A_-)\|_{H^{\frac{7}{3}s-\frac{8}{3}-,\frac{2}{3}s-\frac{5}{6}-}} \\
& \quad \quad\lesssim c(\|\phi_{\pm}\|_{L^{\infty}_t H^s_x} , \|DA^{hom}_{\pm} \|_{L^{\infty}_t H^{2s-2-}_x} , \|A_{\pm}^{inh}\|_{L^{\infty}_t H^{2s-1-}_x}) \, ,
\end{align*}
where $c$ is a continuous function. \\
{\bf a.} First we consider the term $A^{\mu} \partial_{\mu} \phi = A^{\mu}_{inh} \partial_{\mu} \phi + A^{\mu}_{hom} \partial_{\mu} \phi $ . \\
{\bf a1.} By Prop. \ref{Lemma7.1} with parameters $r' = \frac{3}{4-2s}-$ , $q=2$ we have
$$ \|A^{\mu}_{inh} \partial_{\mu} \phi\|_{H^{\frac{7}{3}s-\frac{8}{3}-,\frac{2}{3}s-\frac{5}{6}-}} \lesssim \|A^{\mu}_{inh} \partial_{\mu} \phi\|_{L^2_t H^{s-1,\frac{3}{4-2s}-}_x} \, . $$ 
We prove
$$ \|A^{\mu}_{inh} \partial_{\mu} \phi\|_{L^2_t H^{s-1,\frac{3}{4-2s}-}_x} \lesssim \|A^{\mu}_{inh}\|_{L^{\infty}_t H^{2s-1-}_x} \| \partial_{\mu} \phi\|_{L^{\infty}_t H^{s-1}_x} \, . $$
By duality it suffices to prove
$$ \| uv \|_{H^{1-s}_x} \lesssim \|u\|_{H^{2s-1-}_x} \|v\|_{H_x^{1-s,\frac{3}{2s-1}+}} \, . $$
By the fractional Leibniz rule we estimate two terms. First
$$ \|u \Lambda^{1-s}v\|_{L^2_x} \lesssim \|u\|_{L^q_x} \|\Lambda^{1-s}v\|_{L^{\frac{3}{2s-1}+}_x} \lesssim \|u\|_{H^{2s-1-}_x} \|v\|_{H^{1-s,\frac{3}{2s-1}+}_x} \, , $$
where $\frac{1}{q} = \half - \frac{2s-1}{3}+$ , so that $ H^{2s-1-}_x \hookrightarrow L^q_x$ . Secondly
$$ \|\Lambda^{1-s}u v\|_{L^2_x} \lesssim \|\Lambda^{1-s}u\|_{L^{q_1}_x} \|v\|_{L^{p_1}_x} \lesssim \|u\|_{H^{2s-1-}_x} \|v\|_{H^{1-s,\frac{3}{2s-1}+}_x} \, , $$
where $\frac{1}{q_1} = \half - \frac{3s-2}{3}+$ , $\frac{1}{p_1} = \frac{2s-1}{3} - \frac{1-s}{3} -$ , so that $H^{2s-1-}_x \hookrightarrow H^{1-s,q_1}_x$ and $H^{1-s,\frac{3}{2s-1}+}_x \hookrightarrow L_x^{p_1}$ . \\
{\bf a2.} We split up $A^{\mu}_{hom}$ in a low frequency part $A^{\mu}_{hom,l}$ (for frequencies $\le 1$) and a high frequency part $A^{\mu}_{hom,h}$ (for frequencies $\ge 1$). The term $A^{\mu}_{hom,h} \partial_{\mu} \phi$ can be treated like $A^{\mu}_{inh} \partial_{\mu} \phi$ in a1. We crudely estimate (for $s \le 1$) the low frequency part by use of Prop. \ref{Lemma7.1} and (\ref{10}) as follows:
\begin{align*}
&\|A^{\mu}_{hom,l} \partial_{\mu} \phi\|_{H^{\frac{7}{3}s-\frac{8}{3}-,\frac{2}{3}s-\frac{5}{6}-}}  \lesssim \|A^{\mu}_{hom,l} \partial_{\mu} \phi\|_{H^{s-\frac{4}{3}-,-\frac{1}{6}-}} \\
&\quad\lesssim  \|A^{\mu}_{hom,l} \partial_{\mu} \phi\|_{L^2_t H^{s-1,\frac{3}{2}}_x}  \lesssim \|DA^{\mu}_{hom,l}\|_{L^{\infty}_t L^2_x} \|\partial_{\mu} \phi\|_{L^{\infty}_t H^{s-1}_x} \\
& \quad\lesssim \|DA^{\mu}_{hom,l}\|_{L^{\infty}_t H^{2s-2-}_x} \|\partial_{\mu} \phi\|_{L^{\infty}_t H^{s-1}_x} < \infty \, .
\end{align*}
{\bf b.} Next we consider the cubic term $A^{\mu} A_{\mu} \phi$ . \\
{\bf b1.} We have to estimate $ \|A_{\mu}^{inh} A_{\mu}^{inh} \phi\|_{H^{\frac{7}{3}s-\frac{8}{3}-,\frac{2}{3}s-\frac{5}{6}-}}$ . By Prop. \ref{Lemma7.1} it suffices to estimate $\|A^{\mu}_{inh} A^{\mu}_{inh} \phi\|_{L^2_t H^{s-1,\frac{3}{4-2s}-}_x}$ .  By H\"older and Sobolev we obtain
$$\|A^{\mu}_{inh} A^{\mu}_{inh} \phi\|_{L^{\frac{3}{4-2s}-}_x} \lesssim  \|A^{\mu}_{inh}\|_{L^p_x}^2 \|\phi\|_{L^q_x} \lesssim\|A^{\mu}_{inh}\|_{H^{2s-1-}_x}^2 \|\phi\|_{H^s_x} \, , $$
where $\frac{4-2s}{3} + = \frac{2}{p} + \frac{1}{q} > 2(\half - \frac{2s-1}{3})+ (\half - \frac{s}{3})$ for $s > \frac{5}{6}$ , so that $p$ and $q$ can be chosen such that $H^{2s-1-}_x \hookrightarrow L_x^p$ and $H^s_x \hookrightarrow L^q_x$ . Thus
$$ \|A^{\mu}_{inh} A^{\mu}_{inh} \phi\|_{L^2_t H^{s-1,\frac{3}{4-2s}-}_x} \lesssim \|A^{\mu}_{inh} \|_{L^{\infty}_t H^{2s-1-}_x}^2 \|\phi\|_{L^{\infty}_t H^s_x} < \infty \, . $$
{\bf b2.} The same estimate holds for the term $A^{\mu}_{hom,h} A^{\mu}_{inh} \phi$ . \\
{\bf b3.} For the term $A^{\mu}_{hom,l} A^{\mu}_{inh} \phi$ we argue as in b1. and reduce to the  estimate
\begin{align*}
\|A^{\mu}_{hom,l} A^{\mu}_{inh} \phi\|_{L^{\frac{3}{4-2s}-}_x} &\lesssim \| A^{\mu}_{hom,l}\|_{L^6_x} \|A^{\mu}_{inh}\|_{L^p_x} \|\phi \|_{L^q_x} \\
&\lesssim \|DA^{\mu}_{hom,l} \|_{L^2_x} \|A^{\mu}_{inh}\|_{H^{2s-1-}_x} \|\phi\|_{H^s_x} < \infty \, , 
\end{align*}
where we need $\frac{4-2s}{3} + = \frac{1}{6}+\frac{1}{p}+\frac{1}{q}$ , and for the application of Sobolev $\half \ge \frac{1}{p} \ge \half - \frac{2s-1}{3}+$ and $\half \ge \frac{1}{q} \ge \half - \frac{s}{3}$ . This requires $\frac{4-2s}{3} > \frac{1}{6} + \frac{4}{3} -s $ , which holds for $s > \half$ , and also $\frac{4-2s}{3} < \frac{7}{6}$ , which certainly holds. \\
{\bf b4.} The term $A^{\mu}_{hom,l} A^{\mu}_{hom,h} \phi$ can be treated like b3. \\
{\bf b5.} Finally we crudely estimate
\begin{align*}
&\|A^{\mu}_{hom,l} A^{\mu}_{hom,l} \phi\|_{L^2_t H^{s-1,\frac{3}{4-2s}-}_x}  \lesssim \|A^{\mu}_{hom,l} A^{\mu}_{hom,l} \phi\|_{L^2_t L^{\frac{3}{4-2s}-}_x} \\
& \quad\lesssim \|A^{\mu}_{hom,l} \|_{L^{\infty}_t L^6_x}^2 \|\phi\|_{L^{\infty}_t L^{\frac{3}{3-2s}-}_x} \lesssim \| DA^{\mu}_{hom,l} \|_{L^{\infty}_t L^2_x}^2 \|\phi\|_{L^{\infty}_t H^s_x}
\end{align*}
by the embedding $H^s \hookrightarrow L^{\frac{3}{3-2s}-}$ , which holds for $s > \frac{3}{4}$ . \\
{\bf 2.} Next we want to prove $A^{inh}_{\mu\pm} \in X_{\pm}^{\frac{5}{3}s-1-,\half + \frac{s}{3} -}$ ,
 which requires  to estimate $ \|{\mathcal N}_{\mu} (\phi_+,\phi_-,A_+,A_-)\|_{H^{\frac{5}{3}s-2,\frac{s}{3}-\half-}} $ . \\
{\bf a.} The estimate
$$ \|\phi \overline{\partial_j \phi} \|_{H^{\frac{5}{3}s-2,\frac{s}{3}-\half-}} \lesssim \|\phi\|_{L^{\infty}_t H^s_x} \|\partial_j \phi\|_{L^{\infty}_t H^{s-1}_x} < \infty $$
follows as in part a of the proof of Theorem \ref{Theorem3}. Similarly the term $\phi \Lambda_m(\overline{\phi}_+ - \overline{\phi}_-)$ can be treated. \\
{\bf b.} Next we obtain by Prop. \ref{Lemma7.1} and Sobolev : \\
{\bf b1.}
 \begin{align*}
&\| |\phi|^2 A^{\mu}_{inh}\|_{H^{\frac{5}{3}s-2,\frac{s}{3}-\half-}} \lesssim \| |\phi|^2 A^{\mu}_{inh}\|_{L^2_t H^{s-1,\frac{3}{3-s}}_x} \lesssim \| |\phi|^2 A^{\mu}_{inh}\|_{L^2_t L^{\frac{3}{4-2s}}_x} \\
& \quad\lesssim \|\phi\|_{L^{\infty}_t L^p_x}^2 \|A^{\mu}_{inh}\|_{L^{\infty}_t L^q_x} \lesssim \|\phi\|_{L^{\infty}_t H^s_x}^2 \|A^{\mu}_{inh}\|_{L^{\infty}_t H^{2s-1-}_x} < \infty \, 
, 
\end{align*}
where $\frac{4-2s}{3} = \frac{2}{p} + \frac{1}{q}$ , $\half \ge \frac{1}{q} > \half - \frac{2s-1}{3}$ and $\half \ge \frac{1}{p} \ge \half - \frac{s}{3} $ , which requires $\frac{3}{2} \ge \frac{4-2s}{3} > \frac{11}{6} - \frac{4}{3}s $ , thus $ s > \frac{3}{4}$ . \\
{\bf b2.} The term $|\phi|^2 A^{\mu}_{hom,h}$ is treated in the same way. \\
{\bf b3.} Moreover we crudely estimate
\begin{align*}
&\| |\phi|^2 A^{\mu}_{hom,l}\|_{H^{\frac{5}{3}s-2,\frac{s}{3}-\half-}} \lesssim \| |\phi|^2 A^{\mu}_{hom,l}\|_{L^2_t L^{\frac{3}{3-s}}_x}  \\
& \quad\lesssim \|\phi\|_{L^{\infty}_t L^{\frac{6}{3-s}}_x}^2 \|A^{\mu}_{hom,l}\|_{L^{\infty}_t L^{\infty}_x} \lesssim \|\phi\|_{L^{\infty}_t H^s_x}^2 \|DA^{\mu}_{hom,l}\|_{L^{\infty}_t H^{2s-2-}_x} < \infty \, , 
\end{align*}
because $H^s_x \hookrightarrow L^{\frac{6}{3-s}}_x $ . \\
{\bf 3.} Finally we remark that $DA^{\mu}_{hom} \in H^{2s-2-,1-}$ . \\
{\bf 4.} By interpolation we obtain the following regularity of $\phi_{\pm}$ and $A^{inh}_{\mu \pm}$. \\
{\bf a.} $$\phi_{\pm} \in X^{s,0}_{\pm} \cap X_{\pm}^{\frac{7}{3}s-\frac{5}{3}-,\frac{1}{6}+\frac{2}{3}s-} \subset X^{2-s-\frac{2}{r_1},\half-\frac{1}{r_1}+}_{\pm} \, , $$
where the interpolation parameter $\theta$ is chosen such that
$ \theta(\frac{1}{6}+\frac{2}{3}s) =  \half - \frac{1}{r_1}+ $ and $(1-\theta)s+\theta(\frac{7}{3}s - \frac{5}{3}) =s-\theta(\frac{5}{3}-\frac{4}{3}s) = 2-s-\frac{2}{r_1} \, , $ so that $r_1$ is defined by
\begin{equation}
\label{r}
\frac{2}{r_1} = 2(1-s) + \frac{\half-\frac{1}{r_1}}{\frac{1}{6}+\frac{2}{3}s} \Big( \frac{5}{3}-\frac{4}{3}s \Big)+ \quad \Longleftrightarrow \quad \frac{1}{r_1} = \frac{\frac{7}{2}+s-4s^2}{6}+ \, . 
\end{equation}
This implies by Prop. \ref{Lemma7.1} : $$\phi_{\pm} \in L^2_t H^{1-s-,r_1}_x \, .$$ 
{\bf b.} In the same way we obtain
$$ A^{inh}_{\mu \pm} \in X^{2s-1-,0}_{\pm} \cap X^{\frac{5}{3}s-1,\half+\frac{s}{3}-}_{\pm} \subset X^{2-s-\frac{2}{\tilde{r}_1},\half-\frac{1}{\tilde{r}_1}+}_{\pm} \, , $$
where the interpolation parameter $\theta$ is chosen such that
$ \theta(\frac{1}{2}+\frac{1}{3}s) =  \half - \frac{1}{\tilde{r}_1}+ $ and $(1-\theta)(2s-1-) +\theta(\frac{5}{3}s - 1) =2s-1 -\theta \frac{s}{3}- = 2-s-\frac{2}{\tilde{r}_1} \, , $ so that $\tilde{r}_1$ is defined by
\begin{equation}
\label{rtilde}
\frac{2}{\tilde{r}_1} = 3(1-s) + \frac{\half-\frac{1}{\tilde{r}_1}}{\frac{1}{2}+\frac{s}{3}}+\,\cdot\frac{s}{3}  \quad \Longleftrightarrow \quad \frac{1}{\tilde{r}_1} = \frac{\frac{3}{2} - \frac{1}{3} s-s^2}{1+s}+ \, . \end{equation}
This implies by Prop. \ref{Lemma7.1} : 
\begin{equation}
\label{Amu}
A^{inh}_{\mu \pm} \in L^2_t H^{1-s,\tilde{r}_1}_x \, .
\end{equation}

This the first step of an iteration which yields more and more regularity of $A_j$ and $\phi$. The general iteration step is contained in the next two lemmas.

We define sequences iteratively by $r_1$ and $\ti{r}_1$ as given by (\ref{r}) and (\ref{rtilde}) and for $k \in {\mathbb N}$ by the following relations
\begin{align}
\label{qk'}
\frac{1}{q_k} &= \frac{1}{\ti{r}_k} + \half - \frac{1-s}{3} \\
\label{r(k+1)''}
\frac{1}{r_{k+1}} &= \frac{\frac{5}{2}-3s+\frac{1}{q_k}(2s-1)}{2} + \\
\label{B5}
\frac{1}{\ti{q}_k} &= \frac{1}{r_{k+1}} + \half - \frac{1-s}{3} \\
\label{B4}
\frac{2}{\ti{r}_{k+1}} & = \frac{\frac{7}{2}-4s+(3s-2) \frac{1}{\ti{q}_k}}{1+s}+ 
\end{align}

\begin{lemma}
\label{LemmaA}
Assume 
$$A_{\mu\pm}^{inh} \in X_{\pm}^{2-s-\frac{2}{\ti{r}_k},\half - \frac{1}{\ti{r}_k}+} \, , \, \phi_{\pm} \in  X_{\pm}^{2-s-\frac{2}{r_k},\half - \frac{1}{r_k}+} \, ,$$  
where $\half \ge \frac{1}{\ti{r}_k} > \frac{1-s}{3}$ and $\half \ge \frac{1}{r_k} > \frac{1-s}{3}$. Then we also gain
$$ \phi_{\pm} \in X_{\pm}^{s+1-\frac{2}{q_k},\frac{3}{2}-\frac{1}{q_k}-} \, . $$ 
\end{lemma}
\begin{proof}[Proof of Lemma \ref{LemmaA}:] 
Fundamental for the proof is the fact that our assumption implies by Prop. \ref{Lemma7.1} :
$ A_{j \pm}^{inh} \in L^2_t H^{1-s,\ti{r}_k}_x \, .$ \\
The claimed regularity of $\phi_{\pm}$ reduces by Prop. \ref{Lemma7.1} to proving that the right hand side of (\ref{b}) belongs to $L^2_t H^{s-1,q_k}_x$ , because  $ L^2_t H^{s-1,q_k}_x  \hookrightarrow H^{s-\frac{2}{q_k},\half-\frac{1}{q_k}-}$ . \\
{\bf a.} First we treat the term $A^{\mu} \partial_{\mu} \phi$ . \\
{\bf a1.}
The estimate
$$ \| A^{inh}_{\mu \pm} \partial_{\mu} \phi\|_{L^2_t H^{s-1,q_k}_x} \lesssim \|A^{inh}_{\mu\pm}\|_{L^2_t H^{1-s,{\ti r}_k}_x} \|\partial_{\mu} \phi\|_{L^{\infty}_t H^{s-1}_x} < \infty  $$
follows as in the proof of Lemma \ref{LemmaL1}, 1a. \\
{\bf a2.} Next we estimate similarly by using Prop. \ref{Lemma7.1} for the second step
\begin{align*}
\| A_{\mu\pm}^{hom,h} \partial_{\mu} \phi \|_{L^2_t H^{s-1,q_k}_x} &\lesssim \|A_{\mu\pm}^{hom,h} \|_{L^2_t H^{1-s,\tilde{r}_k}_x} \|\partial_{\mu} \phi\|_{L^{\infty}_t H^{s-1}_x} \\
& \lesssim \|A_{\mu\pm}^{hom,h} \|_{H^{2-s-\frac{2}{\tilde{r}_k},\half - \frac{1}{\tilde{r}_k}+}} \|\partial_{\mu} \phi\|_{L^{\infty}_t H^{s-1}_x} \\
& \lesssim \|A_{\mu\pm}^{hom,h} \|_{H^{2s-1-,1-}} \|\partial_{\mu} \phi\|_{L^{\infty}_t H^{s-1}_x} < \infty \, ,
\end{align*}
where we need $2-s-\frac{2}{\tilde{r}_k} < 2s-1 \, \Leftrightarrow \, \frac{2}{\tilde{r}_k} > 3(1-s)$ , which holds by assumption.  \\
{\bf a3.} We crudely estimate (for some sufficiently large $N$) :
\begin{align*}
\|A_{\mu\pm}^{hom,l} \partial_{\mu} \phi\|_{H^{s-\frac{2}{q_k},\half -\frac{1}{q_k}-}} & \lesssim \|A_{\mu\pm}^{hom,l} \partial_{\mu} \phi\|_{L^2_t H^{s-\frac{2}{q_k}}_x} \lesssim  \|A_{\mu\pm}^{hom,l} \partial_{\mu} \phi\|_{L^2_t H^{s-1}_x} \\
& \lesssim \|A_{\mu\pm}^{hom,l} \|_{L^{\infty}_t H^{N,6}_x} \|\partial_{\mu} \phi\|_{L^{\infty}_t H^{s-1}_x} \\ & \lesssim 
\|DA_{\mu\pm}^{hom,l} \|_{L^{\infty}_t H^{2s-2-}_x} \|\partial_{\mu} \phi\|_{L^{\infty}_t H^{s-1}_x} < \infty \, ,
\end{align*}
where we used that $\frac{1}{q_k} = \frac{1}{\tilde{r}_k} + \half - \frac{1-s}{3}  \ge \half $ by assumption.\\
{\bf b.} Now consider the term $A^{\mu \pm} A_{\mu \pm} \phi$ . \\
{\bf b1.} We start as in a1. and use Prop. \ref{Prop.4.4} for $s > \frac{3}{4}$ and (\ref{Amu}) to obtain
\begin{align*}
\|A_{\mu\pm}^{inh} A_{\mu\pm}^{inh} \phi\|_{L^2_t H^{s-1,q_k}_x} & \lesssim \|A^{\mu}_{inh}\|_{L^2_t H^{1-s,\tilde{r}_k}_x} \|A^{\mu}_{inh} \phi\|_{L^{\infty}_t H^{s-1}_x} \\
& \lesssim \|A^{\mu}_{inh}\|_{L^2_t H^{1-s,\tilde{r}_k}_x} \|A^{\mu}_{inh}\|_{L^{\infty}_t H^{2s-1-}_x} \| \phi\|_{L^{\infty}_t H^s_x} < \infty \, . 
\end{align*}
{\bf b2.} In the same way the term $A_{\mu\pm}^{inh}  A_{\mu\pm}^{hom,h} \phi$ can be treated. \\
{\bf b3.} Next, as in a1. and by use of Prop. \ref{Prop.4.4} :
\begin{align*}
\|A_{\mu\pm}^{hom,l} A_{\mu\pm}^{inh} \phi\|_{L^2_t H^{s-1,q_k}_x} & \lesssim \|A_{\mu\pm}^{inh}\|_{L^2_t H^{1-s,\tilde{r}_k}_x} \|A_{\mu\pm}^{hom,l} \phi\|_{L^{\infty}_t \dot{H}^{s-1}_x} \\& \lesssim \|A_{\mu\pm}^{hom,l}\|_{L^{\infty}_t \dot{H}^{2s-1-}_x} \|\phi\|_{L^{\infty}_t \dot{H}^{\frac{3}{2}-s+}_x} \|A_{\mu\pm}^{inh}\|_{L^2_t H^{1-s,\tilde{r}_k}_x} \\ 
& \lesssim \|DA_{\mu\pm}^{hom,l}\|_{L^{\infty}_t H^{2s-2}_x} \| \phi\|_{L^{\infty}_t H^s_x} \|A^{\mu}_{inh}\|_{L^2_t H^{1-s,\tilde{r}_k}_x} < \infty \, .
\end{align*}
{\bf b4.} We easily obtain using $q_k \le 2$
\begin{align*}
\|A_{\mu\pm}^{hom,l} A_{\mu\pm}^{hom,l} \phi\|_{H^{s-\frac{2}{q_k},\half-\frac{1}{q_k}-}} & \lesssim \|A_{\mu\pm}^{hom,l} A_{\mu\pm}^{hom,l} \phi\|_{L^2_t H^{s-1}_x} \\
& \lesssim \|A_{\mu\pm}^{hom,l} A_{\mu\pm}^{hom,l} \phi\|_{L^2_t L^2_x} \\
& \lesssim \|A_{\mu\pm}^{hom,l} \|_{L^{\infty}_t L^{\infty}_x}^2 \|\phi\|_{L^{\infty}_t L^2_x} < \infty \, .
\end{align*}
Thus we have proven $\phi_{\pm} \in X_{\pm}^{s+1-\frac{2}{q_k},\frac{3}{2}-\frac{1}{q_k}-}$ . 
\end{proof}

\begin{Cor}
\label{CorA}
Under the assumptions of Lemma \ref{LemmaA} we obtain the regularity
$\phi_{\pm} \in X_{\pm}^{2-s-\frac{2}{r_{k+1}},\half - \frac{1}{r_{k+1}}+}$ .
\end{Cor}
\begin{proof}
We interpolate the property $\phi_{\pm} \in X_{\pm}^{s+1-\frac{2}{q_k},\frac{3}{2}-\frac{1}{q_k}-} \, , $ where
$$
 \frac{1}{q_k} = \frac{1}{\ti{r}_k} + \half - \frac{1-s}{3} \, , 
$$
 with our assumption $\phi_{\pm} \in X_{\pm}^{s,0}$ and obtain  the claimed regularity, if we choose the interpolation parameter $\theta$ by the conditions $\theta(\frac{3}{2}-\frac{1}{q_k}-) = \half - \frac{1}{r_{k+1}}+ $ and $(1-\theta)s + \theta(s+1-\frac{2}{q_k}) = s - \theta(\frac{2}{q_k}-1) = 2-s - \frac{2}{r_{k+1}} $ . This requires (\ref{r(k+1)''}) .
\end{proof}

\begin{lemma}
\label{LemmaB}
Assume 
$$A_{\mu\pm}^{inh} \in X_{\pm}^{2-s-\frac{2}{\ti{r}_k},\half - \frac{1}{\ti{r}_k}+} \, , \, \phi \in  X_{\pm}^{2-s-\frac{2}{r_{k+1}},\half - \frac{1}{r_{k+1}}+} \, , $$
where $r_{k+1}$ is defined by (\ref{r(k+1)''}) and $\ti{r}_k$ by (\ref{qk'}).
Then we also gain
$$ A_{\mu\pm}^{inh} \in X_{\pm}^{s+1-\frac{2}{\ti{q}_k},\frac{3}{2}-\frac{1}{\ti{q}_k}-} \, . $$ 
\end{lemma}
\begin{proof}
{\bf a.}
The estimate
$$ \| \phi \partial_{\mu} \phi\|_{L^2_t H^{s-1,\ti{q}_k}_x} \lesssim \| \phi \|_{L^2_t H^{1-s,r_{k+1}}_x} \|\partial_{\mu} \phi\|_{L^{\infty}_t H^{s-1}_x} < \infty  $$
follows as in the proof of Lemma \ref{LemmaL1}, 1a. \\
{\bf b1.}
We want to demonstrate
$$ \| A_{\mu \pm}^{inh} |\phi|^2 \|_{L^2_t H^{s-1,\ti{q}_k}_x} \lesssim \| A_{\mu \pm}^{inh} \|_{L^2_t H^{1-s,\ti{r}_k}_x} \| \phi\|_{L^{\infty}_t H^{s}_x}^2 < \infty  \, .$$
We prove the estimate
$$ \||\phi|^2 v\|_{H^{s-1,\ti{q}_k}} \lesssim \||\phi|^2\|_{H^{s-1,\ti{t}_k}}  \|v\|_{H^{1-s,\ti{r}_k}} $$
for a suitable $\ti{t}_k$ .
By duality this is equivalent to
$$ \|vw\|_{H^{1-s,\ti{t}_k'}} \lesssim \|v\|_{H^{1-s,\ti{r}_k}} \|w\|_{H^{1-s,\ti{q}_k'}} \, . $$
By the fractional Leibniz rule we have to prove two estimates:
\begin{equation}
\label{B1}
\| (\Lambda^{1-s} v) w\|_{L^{\ti{t}_k'}}\lesssim  \|\Lambda^{1-s}v\|_{L^{\ti{r}_k}} \|w\|_{L^{p_k}} \lesssim \|\Lambda^{1-s} v\|_{L^{\ti{r}_k}} \|w\|_{H^{1-s,\ti{q}_k'}} \, .
\end{equation}
Here we need $\frac{1}{p_k} \ge \frac{1}{\ti{q}_k'} - \frac{1-s}{3}$ , so that by Sobolev $H^{1-s,\ti{q}_k'} \hookrightarrow L^{p_k}$ and $\frac{1}{\ti{t}_k'} = \frac{1}{\ti{r}_k'} + \frac{1}{p_k} \ge \frac{1}{\ti{r}_k} + \frac{1}{\ti{q}_k'} - \frac{1-s}{3}\ge \half + \frac{1}{\ti{r}_k} - \frac{1}{r_{k+1}}$ , thus we choose $\frac{1}{\ti{t}_k} = \half - \frac{1}{\ti{r}_k} + \frac{1}{r_{k+1}}$ . Similarly we also obtain
\begin{equation}
\label{B2}
\| v \Lambda^{1-s} w\|_{L^{\ti{t}_k'}} \lesssim \|v\|_{L^{\ti{p}_k}} \|\Lambda^{1-s} w\|_{L^{\ti{q}_k'}} \lesssim \|\Lambda^{1-s}v\|_{L^{\ti{r}_k}} \|\Lambda^{1-s} w\|_{L^{\ti{q}_k'}} \, , 
\end{equation}
where we need $\frac{1}{\ti{p}_k} \ge \frac{1}{\ti{r}_k} - \frac{1-s}{3}$ , so that
$\frac{1}{\ti{t}_k'} = \frac{1}{\ti{q}_k'} + \frac{1}{\ti{p}_k} \ge \frac{1}{\ti{q}_k'} + \frac{1}{\ti{r}_k} - \frac{1-s}{3}$ as above. It remains to estimate
$$\| |\phi|^2 \|_{H^{s-1,\ti{t}_k}} \lesssim \| |\phi|^2 \|_{L^{z_k}} = \| \phi \|_{L^{2z_k}}^2 \lesssim \| \phi\|_{H^s}^2 \, . $$
Here we need by Sobolev $\frac{1}{\ti{t}_k} \ge \frac{1}{z_k} - \frac{1-s}{3}$ and $ \frac{1}{2z_k} \ge \half-\frac{s}{3}$ and thus $\frac{1}{\ti{t}_k} \ge \frac{2}{3} - \frac{s}{3}$ . In view of our choice of $\ti{t}_k$  we have to prove
$ \half - \frac{1}{\ti{r}_k} + \frac{1}{r_{k+1}} \ge \frac{2}{3} - \frac{s}{3}$ . Using (\ref{r(k+1)''}) and  (\ref{qk'}) an elementary calculation proves that this is equivalent to $\frac{1}{q_k} < \frac{5}{6}$ . In the appendix 5.2 we prove that $\{\frac{1}{q_k}\}$ is monotonically decreasing, so that it remains to prove $\frac{1}{q_1}=\frac{1}{\ti{r}_1} + \half - \frac{1-s}{3} = \frac{\frac{3}{2}-\frac{1}{3}s-s^2}{1+s}+\half - \frac{1-s}{3} < \frac{5}{6}$ , using the definition of $\ti{r_1}$ by (\ref{rtilde}). One easily checks that this is equivalent to $\frac{5}{2}-2s-2s^2 < 0 $ , which holds for $s > \half(\sqrt{6} -1) \approx 0.725$ . This completes the proof of part b1. \\
{\bf b2.} Starting as in b1. we obtain
\begin{align*}
\| A_{\mu \pm}^{hom,h} |\phi|^2 \|_{L^2_t H^{s-1,\ti{q}_k}_x} &\lesssim \| A_{\mu \pm}^{hom,h} \|_{L^2_t H^{1-s,\ti{r}_k}_x} \| \phi\|_{L^{\infty}_t H^{s}_x}^2 \\
& \lesssim \|A^{hom,h}_{\mu \pm} \|_{H^{2-s-\frac{2}{\ti{r}_k},\half - \frac{1}{\ti{r}_k}+}} \| \phi\|_{L^{\infty}_t H^{s}_x}^2 \\
& \lesssim \|A^{hom,h}_{\mu \pm} \|_{H^{2s-1-,1-}} \| \phi\|_{L^{\infty}_t H^{s}_x}^2 < \infty \, ,
\end{align*}
using that  $\frac{2}{\ti{r}_k} > 3(1-s)$ by Lemma \ref{LemmaA.5}. \\
{\bf b3.} Finally we crudely estimate as follows:
\begin{align*}
\|A^{hom,l}_{\mu \pm} | \phi|^2 \|_{X_{\pm}^{s-\frac{2}{\ti{q}_k},\half-\frac{1}{\ti{q}_k}-}} & \lesssim \|A^{hom,l}_{\mu \pm} | \phi|^2 \|_{L^2_t H_x^{s-\frac{2}{\ti{q}_k}}} \\
& \lesssim \|A^{hom,l}_{\mu \pm} | \phi|^2 \|_{L^2_t H^{s-1}_x} \\
& \lesssim \|A^{hom,l}_{\mu \pm} \|_{L^{\infty}_t H^{N,6}_x} \| |\phi|^2 \|_{L^{\infty}_t H^{s-1}_x} \\
& \lesssim \|D A^{hom,l}_{\mu \pm} \|_{L^{\infty}_t H^{2s-2-}_x} \| \phi \|_{L^{\infty}_t H^s_x}^2
\end{align*}
with a sufficiently large $N$.
\end{proof}

\begin{Cor}
\label{Cor.B}
Under the assumptions of Lemma \ref{LemmaB} we obtain the regularity $A_{\mu \pm}^{inh} \in X_{\pm}^{2-s-\frac{2}{\ti{r}_{k+1}}, \half - \frac{1}{\ti{r}_{k+1}}+}$ .
\end{Cor}
\begin{proof} 
By interpolation we obtain the following regularity
$ A_{\mu \pm}^{inh} \in X_{\pm}^{s+1-\frac{2}{\ti{q}_k},\frac{3}{2}-\frac{1}{\ti{q}_k}-} \\\cap X_{\pm}^{2s-1-,0} 
\subset X_{\pm}^{2-s-\frac{2}{\ti{r}_{k+1}},\half - \frac{1}{\ti{r}_{k+1}}+} \, , $ where the interpolation parameter $\theta$ is chosen such $\theta(\frac{3}{2}-\frac{1}{\ti{q}_k}-) = \half - \frac{1}{\ti{r}_{k+1}}+ $ and $\theta(s+1-\frac{2}{\ti{q}_k}) + (1-\theta)(2s-1-) = 2-s-\frac{2}{\ti{r}_{k+1}}$. One calculates that this leads to (\ref{B4}).
\end{proof}
By Lemma \ref{LemmaA.5} the assumptions of 
Lemma \ref{LemmaA} and Lemma \ref{LemmaB} are satisfied, so that iteratively we conclude
$$ \phi_{\pm} \in X_{\pm}^{s+1-\frac{2}{q_k},\frac{3}{2}-\frac{1}{q_k}-} \quad \forall \,k\in \mathbb N $$
and 
$$ A^{inh}_{\mu \pm} \in X_{\pm}^{s+1-\frac{2}{\ti{q}_k},\frac{3}{2}-\frac{1}{\ti{q}_k}-} \quad \forall \,k \in \mathbb N \, . $$

In the appendix 5.2 we prove $q_k \to q_{\infty}$ , $r_k \to r_{\infty}$ , $\ti{q}_k \to \ti{q}_{\infty}$ , $\ti{r}_k \to \ti{r}_{\infty}$ and determine these limits explicitly. Assuming this we immediately conclude
$$ \phi_{\pm} \in X_{\pm}^{s+1-\frac{2}{q_\infty}-,\frac{3}{2}-\frac{1}{q_{\infty}}-} 
 \, , \,  A^{inh}_{\mu \pm} \in X_{\pm}^{s+1-\frac{2}{\ti{q}_{\infty}}-,\frac{3}{2}-\frac{1}{\ti{q}_{\infty}}-}  $$

Consequently we obtain
\begin{lemma}
\label{LemmaC} 
$\phi_{\pm} \in X_{\pm}^{s_{\infty}-,\frac{1}{2}+} \, , $
where $s_{\infty} = s - \frac{\frac{1}{2}(\frac{2}{q_{\infty}}-1)}{\frac{3}{2}-\frac{1}{q_{\infty}}} -$ .
\end{lemma}
\begin{proof}
This follows by interpolation. We obtain
$$ \phi_{\pm} \in X_{\pm}^{s+1-\frac{2}{q_{\infty}}-,\frac{3}{2}-\frac{1}{q_{\infty}}-} \cap X_{\pm}^{s,0} \subset X_{\pm}^{s_{\infty}-,\half+} \, , $$
if we choose the interpolation parameter  $\theta$ such that $\theta(\frac{3}{2} - \frac{1}{q_{\infty}}-) = \half+$ , so that 
\begin{equation}
\label{sinf'}
s_{\infty} = (1-\theta)s + \theta(s+1-\frac{2}{q_{\infty}}) = s-\theta(\frac{2}{q_{\infty}}-1) =  s - \frac{\half(\frac{2}{q_{\infty}}-1)}{\frac{3}{2}-\frac{1}{q_{\infty}}} -\, . 
\end{equation}
\end{proof}
Now we calculate
\begin{align*} 
s_{\infty} > \frac{3}{4} & \Longleftrightarrow (4s-3)(\frac{3}{2}-\frac{1}{q_{\infty}}) - 2(\frac{2}{q_{\infty}}-1) > 0 \Longleftrightarrow \frac{1}{q_{\infty}} < \frac{\frac{3}{2}s-\frac{5}{8}}{s+\frac{1}{4}} \, . 
\end{align*}
Using (\ref{qinf}) in the appendix 5.2 this results in the following condition for $s$ :
$$\frac{\frac{5}{6}+\frac{37}{12}s-\frac{19}{6}s^2}{s(\frac{9}{2} -3s)} <  \frac{\frac{3}{2}s-\frac{5}{8}}{s+\frac{1}{4}} \Longleftrightarrow 32s^3-152s^2+106s+5 > 0 \, . $$
Thus  this is fufilled
 for $s > s_0$ , where $s_0$ denotes the largest root of this cubic polynomial. This coincides  with our assumption on $s$. A calculation yields $0,906 < s_0 < 0.907$ . 

It remains to prove $A^{inh}_{\mu \pm} \in X^{\half+,1-}_{\pm}$ . \\
{\bf a.} By Theorem \ref{Theorem3.2} we easily obtain
$$ \|\phi \partial_{\mu} \phi \|_{H^{-\half+,0}} \lesssim \|\phi\|_{H^{\frac{3}{4}+\half+}} \|\partial_{\mu} \phi\|_{H^{-\frac{1}{4}+,\half+}} < \infty \, . $$
{\bf b1.} Next we obtain
\begin{align*}
\| |\phi|^2 A^{inh}_{\mu} \|_{H^{-\half+,0}} &\lesssim \| |\phi|^2 \|_{H^{2-2s+,0}} \|A^{inh}_{\mu}\|_{L^{\infty}_t H^{2s-1-}_x} \\
& \lesssim  \|\phi\|_{H^{\frac{3}{4}+,\half+}}^2\|A^{inh}_{\mu}\|_{L^{\infty}_t H^{2s-1-}_x} \, , 
\end{align*}
where we used Prop. \ref{Prop.4.4} for the first step and Theorem \ref{Theorem3.2} for the second step, which holds for $s > \frac{3}{4}$ . \\
{\bf b2.} The same estimate holds for $|\phi|^2 A^{\mu}_{hom,l}$ . \\
{\bf b3.} Finally we obtain for $s > \frac{3}{4}$ :
\begin{align*}
\| |\phi|^2 A^{\mu}_{hom,l} \|_{H^{-\half+,0}} & \lesssim \|  |\phi|^2 A^{\mu}_{hom,l} \|_{L^2_t L^{\frac{3}{2}+}_x} \lesssim \| A^{\mu}_{hom,l} \|_{L^{\infty}_t L^6_x} \|\phi\|_{L^{\infty}_t L^{4+}_x}^2 \\
& \lesssim \| D A^{\mu}_{hom,l} \|_{L^{\infty}_t H^{2s-2-}_x} \|\phi\|^2_{L^{\infty}_t H^s_x} < \infty \, .
\end{align*}

The proof of Theorem \ref{Theorem5} is complete.
\end{proof}

\section{Appendix}
\noindent{\bf 5.1 Coulomb gauge.}
We define for $k \in \mathbb{N}$ : $t_k := \frac{1}{r_k}$ , $t_{\infty} := \frac{1}{r_{\infty}}$ , where $r_k$ is defined by (\ref{r1}) and (\ref{r(k+1)}), and $r_{\infty}$ by (\ref{rinf}).
\begin{lemma}
\label{LemmaA.1}
If $1 \ge s \ge \frac{3}{4}$ we obtain $t_k > t_{\infty}$ $\forall \, k \in {\mathbb N}$ .
\end{lemma}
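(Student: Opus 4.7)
The plan is to first unfold the implicit recursion (\ref{r(k+1)}) into an explicit affine form, so that the problem reduces to a trivial contraction-map argument. Writing $A = (1-s)/3$, $x = 1/r_{k+1}$, $y = 1/r_k$, equation (\ref{r(k+1)}) becomes
\begin{equation*}
2x = (2-2s) + \frac{(\tfrac{1}{2} - x)(2y - 2A)}{1 - y + A}.
\end{equation*}
Clearing denominators and using the identity $2(1-y+A) + (2y - 2A) = 2$ yields, after minor algebra, the clean formula
\begin{equation*}
t_{k+1} = \left(s - \tfrac{1}{2}\right) t_k + \frac{(1-s)(7-2s)}{6},
\end{equation*}
where $t_k := 1/r_k$. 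Solving the fixed-point equation $t = (s - \tfrac{1}{2})t + (1-s)(7-2s)/6$ gives exactly $t = (1-s)(7-2s)/(3(3-2s))$, which is precisely $t_\infty$ as defined in (\ref{rinf}).

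Consequently the recursion linearizes to $t_{k+1} - t_\infty = (s - \tfrac{1}{2})(t_k - t_\infty)$, so $t_k - t_\infty = (s-\tfrac{1}{2})^{k-1}(t_1 - t_\infty)$ for every $k \ge 1$. Since the hypothesis $s \in [\tfrac{3}{4},1]$ gives $s - \tfrac{1}{2} \ge \tfrac{1}{4} > 0$, the desired inequality $t_k > t_\infty$ reduces by induction to the base case $t_1 > t_\infty$.

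For the base case I would substitute $t_1 = \tfrac{3}{4} - \tfrac{s}{3} - \tfrac{s^2}{3}$ from (\ref{r1}) and the closed form of $t_\infty$, put everything over the common denominator $12(3-2s)$, and expand. The numerator collapses to $8s^3 - 12s^2 + 6s - 1 = (2s-1)^3$, giving the compact identity
\begin{equation*}
t_1 - t_\infty = \frac{(2s-1)^3}{12(3-2s)},
\end{equation*}
which is strictly positive throughout $s \in [\tfrac{3}{4},1]$ (indeed throughout $s \in (\tfrac{1}{2}, \tfrac{3}{2})$). Combined with the linear identity above this yields $t_k > t_\infty$ for every $k \in \mathbb{N}$, and as a bonus $t_k \to t_\infty$ geometrically with rate $s-\tfrac{1}{2}$, which is the convergence claim required later.

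The only subtle point is the small-$\epsilon$ ``$+$'' perturbations built into (\ref{r1}), (\ref{r(k+1)}) and (\ref{rinf}). These are chosen with the same small $\epsilon$ throughout; since the gap $(2s-1)^3/(12(3-2s))$ is strictly bounded away from $0$ for $s$ bounded away from $\tfrac{1}{2}$ and the multiplier $s-\tfrac{1}{2}$ stays strictly less than $1$, the strict inequality survives the perturbation once $\epsilon$ is sufficiently small and no genuine obstacle arises. The main content, and the step I would execute first, is the algebraic reduction to the affine recursion — once that is in hand everything else is essentially immediate.
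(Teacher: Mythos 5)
Your proposal is correct and follows essentially the same route as the paper: the paper likewise reformulates (\ref{r(k+1)}) into the affine recursion $t_{k+1}=(s-\tfrac12)t_k+\tfrac{(1-s)(7-2s)}{6}$ (its equation (\ref{t(k+1)})), uses positivity of the coefficient $s-\tfrac12$ together with the fixed-point identity for $t_\infty$ in the induction step, and verifies the base case $t_1>t_\infty$ by the same algebra (the paper's expression $s(\tfrac23 s^2-s+\tfrac12)-\tfrac1{12}$ is exactly your $(2s-1)^3/12$). Your explicit solution $t_k-t_\infty=(s-\tfrac12)^{k-1}(t_1-t_\infty)$ and the factorization of the base-case gap are both correct and in fact streamline the paper's argument, valid on all of $s\in(\tfrac12,1]$ rather than only $s\ge\tfrac34$.
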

\begin{proof} (by induction) 
Using (\ref{r1}) and (\ref{rinf}) we first prove 
\begin{equation}
\label{IA}
t_1 = \frac{3}{4} - \frac{1}{3}s-\frac{1}{3}s^2 -> t_{\infty} = \frac{(\frac{7}{3}-\frac{2}{3}s)(1-s)}{3-2s}+ \, . 
\end{equation}
This is equivalent to
$$
(3-2s) (\frac{3}{4} - \frac{1}{3}s-\frac{1}{3}s^2)  > (\frac{7}{3}-\frac{2}{3}s)(1-s) \quad
\Leftrightarrow \quad s(\frac{2}{3}s^2 - s + \half) - \frac{1}{12}  > 0 \, .
$$
If $s \ge \frac{3}{4}$ , this is a consequence of
$$ \frac{3}{4}(\frac{2}{3}s^2 - s + \half) - \frac{1}{12}  > 0 \quad \Leftrightarrow \frac{2}{3}s^2 -s+\half-\frac{1}{9} > 0 \, ,
$$
because $$\frac{2}{3}s^2-s+\half > 0 \, \Leftrightarrow \, (s-\frac{3}{4})^2 + \frac{3}{16} > 0 \,,$$ which is true. Now one easily checks
$$  \frac{2}{3}s^2 -s+\half-\frac{1}{9} > 0 \quad \Leftrightarrow (s-\frac{3}{4})^2 + \frac{1}{48} > 0 \, ,$$
which certainly holds. This implies (\ref{IA}).

For the iteration step we start by reformulating (\ref{r(k+1)}) as follows:
\begin{align}
\nonumber
2t_{k+1}(1-t_k + \frac{1-s}{3}) &= 2(1-s)(1-t_k + \frac{1-s}{3}) + (\half-t_{k+1})(2t_k - \frac{2(1-s)}{3}+ \\
\label{t(k+1)}
\Longleftrightarrow \quad
t_{k+1} & = (1-s)(\frac{5}{6}-t_k + \frac{1}{3}(1-s)) + \half t_k +\, ,
\end{align}
We want to prove that $t_{k+1} > t_{\infty}$ provided $t_k > t_{\infty}$ . Now by (\ref{t(k+1)}) we have
$$ t_{k+1} > t_{\infty} \quad \Longleftrightarrow \quad (\half-(1-s)) t_k + (1-s)(\frac{5}{6}+\frac{1}{3}(1-s)) + > t_{\infty} \, . $$
Using $\half-(1-s) > 0$ for $s > \half$ and $t_k > t_{\infty}$ this reduces to
$$(\half-(1-s))t_{\infty} +(1-s)(\frac{5}{6}+\frac{1}{3}(1-s))+ \ge t_{\infty} \, ,$$
 which holds by equality.
\end{proof}

\begin{lemma}
\label{LemmaA.2}
If $ 1 \ge s \ge \frac{3}{4} $ , we have $t_{k+1} \le t_k$ $\forall \, k \in {\mathbb N}$ .
\end{lemma}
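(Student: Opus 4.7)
The plan is to read the recursion (\ref{t(k+1)}) as an affine map in $t_k$. Grouping terms gives
$$ t_{k+1} \;=\; (s-\tfrac{1}{2})\,t_k \;+\; (1-s)\bigl(\tfrac{5}{6}+\tfrac{1}{3}(1-s)\bigr)+ ,$$
which for $s\in[\tfrac{3}{4},1]$ has positive slope $s-\tfrac{1}{2}\in(\tfrac{1}{4},\tfrac{1}{2}]$ and nonnegative intercept. Its unique fixed point is $t_\infty$, a fact visible either by setting $t_{k+1}=t_k=t_\infty$ in (\ref{t(k+1)}) or directly from the explicit formula (\ref{rinf}); in either form one records the identity
$$ \bigl(\tfrac{3}{2}-s\bigr)\,t_\infty \;=\; (1-s)\bigl(\tfrac{5}{6}+\tfrac{1}{3}(1-s)\bigr) .$$

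With this in hand the proof is one line. Subtracting $t_k$ from both sides of the recursion and substituting the fixed-point identity yields
$$ t_{k+1} - t_k \;=\; \bigl(\tfrac{3}{2}-s\bigr)\bigl(t_\infty - t_k\bigr) .$$
Since $\tfrac{3}{2}-s>0$ for $s\le 1$, and since Lemma \ref{LemmaA.1} guarantees $t_k > t_\infty$ for every $k\in\mathbb{N}$, the right-hand side is $\le 0$, which is the claim.

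There is no real obstacle here: the only content beyond arithmetic is Lemma \ref{LemmaA.1}, which supplies the sign of $t_\infty-t_k$, and the trivial observation that both $s-\tfrac{1}{2}$ and $\tfrac{3}{2}-s$ are positive on the range $s\in[\tfrac{3}{4},1]$. As a by-product, the same computation shows $t_k$ decreases monotonically toward $t_\infty$ at geometric rate $s-\tfrac{1}{2}$, although only monotonicity is needed for the lemma.
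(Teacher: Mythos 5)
Your proof is correct and is essentially the paper's argument in a slightly cleaner packaging: the paper rearranges the recursion (\ref{t(k+1)}) to show $t_{k+1}\le t_k \Longleftrightarrow t_k\ge t_\infty$ and then invokes Lemma \ref{LemmaA.1}, which is exactly the content of your identity $t_{k+1}-t_k=(\tfrac{3}{2}-s)(t_\infty-t_k)$. No gap.
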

\begin{proof}
By (\ref{t(k+1)}) and (\ref{rinf}) we obtain
\begin{align*}
t_{k+1} \le t_k &\Longleftrightarrow (1-s)(\frac{5}{6}-t_k + \frac{1}{3}(1-s)) + \frac{t_k}{2} + \le t_k \\&\Longleftrightarrow t_k \ge \frac{(1-s)(\frac{7}{3}-\frac{2}{3}s)}{3-2s} + = t_{\infty} \, .
\end{align*}
This holds by Lemma \ref{LemmaA.1}.
\end{proof}

\begin{Cor}
If $1 \ge s \ge \frac{3}{4}$ we obtain
$t_k \to t_{\infty}$ and $\frac{1}{r_k} \to \frac{1}{r_{\infty}}$ as $k \to \infty$ .
\end{Cor}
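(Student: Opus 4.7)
The plan is to combine Lemma \ref{LemmaA.1} and Lemma \ref{LemmaA.2} with the monotone convergence theorem, then pass to the limit in the recursion to identify the limit value explicitly.

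First I would observe that by Lemma \ref{LemmaA.2} the sequence $\{t_k\}$ is monotonically decreasing, while by Lemma \ref{LemmaA.1} it is bounded below by $t_\infty$. Hence $\{t_k\}$ is a monotone bounded sequence of real numbers and converges to some limit $t^* \ge t_\infty$.

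To identify $t^*$, I would pass to the limit in the recursion in the form (\ref{t(k+1)}), namely
$$ t_{k+1} = (1-s)\Bigl(\tfrac{5}{6} - t_k + \tfrac{1}{3}(1-s)\Bigr) + \tfrac{1}{2}t_k + \, . $$
Since the right-hand side is an affine (hence continuous) function of $t_k$, the limit $t^*$ must satisfy the fixed-point equation
$$ t^* = (1-s)\Bigl(\tfrac{5}{6} - t^* + \tfrac{1}{3}(1-s)\Bigr) + \tfrac{1}{2} t^* \, , $$
which rearranges to $t^*\bigl(\tfrac{3}{2}-s\bigr) = (1-s)\cdot\tfrac{7-2s}{6}$, so that
$$ t^* = \frac{(\tfrac{7}{3}-\tfrac{2}{3}s)(1-s)}{3-2s} = t_\infty \, , $$
precisely the value defined in (\ref{rinf}). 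Since $t_k = 1/r_k$ and $t_\infty = 1/r_\infty$ by definition, the second claim $1/r_k \to 1/r_\infty$ is equivalent to the first and requires no separate argument.

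The argument is essentially routine monotone convergence, so I do not expect a genuine obstacle. The only bookkeeping issue is that the recursion (\ref{r(k+1)}), and therefore (\ref{t(k+1)}), carries a ``$+$'' which encodes an arbitrarily small positive $\epsilon$; but this same convention is built into the definition of $t_\infty$ in (\ref{rinf}), so the fixed-point identification is consistent, and monotonicity plus boundedness are unaffected by these infinitesimal corrections.
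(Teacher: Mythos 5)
Your proof is correct and follows essentially the same route as the paper: monotone decrease (Lemma \ref{LemmaA.2}) plus the lower bound $t_k > t_\infty$ (Lemma \ref{LemmaA.1}) give convergence, and the limit is identified by passing to the limit in (\ref{t(k+1)}), whose unique fixed point is $t_\infty$ by construction (this is exactly how (\ref{rinf}) is derived in the paper). Your explicit fixed-point computation and the remark about the $+$ convention merely spell out what the paper's one-line proof leaves implicit.
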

\begin{proof}
This follows immediately by combining Lemma \ref{LemmaA.1} and Lemma \ref{LemmaA.2}.
\end{proof} 
\noindent
{\bf Remark 5.1.} By Lemma \ref{LemmaA.1} we know for $1 \ge s \ge \frac{3}{4}$ : \\
$  \frac{1}{r_k} \ge \frac{1}{r_{\infty}} = \frac{(1-s)(\frac{7}{3}-\frac{2}{3}s)}{3-2s} + \ge \frac{1-s}{3} \quad$
and
$\quad\frac{1}{r_k} \le \frac{1}{r_1} = \frac{3}{4}-\frac{1}{3}s - \frac{1}{3}s^2 -\le \half \, . $\\[1em]

\noindent{\bf 5.2. Lorenz gauge.}
 We define
\begin{equation}
\label{rinf'}
 \frac{1}{r_{\infty}} := \frac{(\frac{7}{3}-3s+\frac{2}{3}s^2)(1+s) + (2s-1)(\frac{19}{6}-\frac{25}{6}s+s^2)}{2(1+s) - (3s-2)(2s-1)} 
\end{equation}
or equivalently
\begin{equation}
\label{rinf''}
\frac{1}{r_{\infty}} 
= \frac{\frac{7}{3}-3s+\frac{2}{3}s^2 + (2s-1)\frac{\frac{19}{6}-\frac{25}{6}s + s^2 + (3s-2)\frac{1}{r_\infty}}{1+s}}{2}
\end{equation} 
and want to prove that for $ 1 \ge s>s_0$ we obtain $\frac{1}{r_k} \to \frac{1}{r_{\infty}}$ as $k \to \infty$  where $r_k$ is iteratively defined in the following way. Using (\ref{B4}) and (\ref{B5}) we obtain
\begin{align}
\label{B6}
\frac{1}{\ti{r}_{k+1}}& = \frac{3(1-s)(\frac{3}{2} - \frac{1}{\ti{q}_k}) + \half(s-2+\frac{2}{\ti{q}_k})}{1+s} = \frac{\frac{7}{2}-4s+(\frac{1}{r_{k+1}}+\half-\frac{1-s}{3})(3s-2)}{1+s} \\
&= \frac{\frac{19}{6}-\frac{25}{6}s+s^2+(3s-2)\frac{1}{r_{k+1}}}{1+s} \, .
\end{align}
Next we use (\ref{r(k+1)''}), (\ref{qk'}) and obtain for $k \ge 1$ :
\begin{align}
\nonumber
\frac{1}{r_{k+1}} & = \frac{\frac{5}{2}-3s+\frac{1}{q_k}(2s-1)}{2} = \frac{\frac{5}{2}-3s+(\frac{1}{\ti{r}_k} + \half - \frac{1-s}{3})(2s-1)}{2} \\
\label{B7}
& = \frac{\frac{7}{3}-3s+\frac{2}{3} s^2 + (2s-1)\frac{1}{\ti{r}_k}}{2} \, ,
\end{align}
and by (\ref{B6}) this implies for $k \ge 2$ :
\begin{align}
\label{B8}
\frac{1}{r_{k+1}} &
= \frac{\frac{7}{3}-3s+\frac{2}{3}s^2 + (2s-1)\frac{\frac{19}{6}-\frac{25}{6}s + s^2 + (3s-2)\frac{1}{r_k}}{1+s}}{2} \end{align}
This together with (\ref{r})  $ \frac{1}{r_1} = \frac{\frac{7}{2}+s-4s^2}{6}+$ , and (\ref{rtilde}) $\frac{1}{\ti{r}_1} = \frac{\frac{3}{2}-\frac{1}{3}s-s^2}{1+s}+ $ , defines the sequence $\frac{1}{r_k}$ . This implies $\frac{1}{r_k} \to \frac{1}{r_{\infty}}$ , provided convergence holds. 
\begin{lemma}
\label{LemmaA.3}
If $1 \ge s > \frac{9}{10}$ we obtain $\frac{1}{r_k} > \frac{1}{r_{\infty}}$ $\forall \, k \in {\mathbb N}$ .
\end{lemma}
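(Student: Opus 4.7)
The strategy mirrors the Coulomb gauge argument in Lemmas \ref{LemmaA.1}--\ref{LemmaA.2}. The key structural observation is that, by combining (\ref{B6}) with (\ref{B7}), the one-step recursion (\ref{B8}) is affine in $\frac{1}{r_k}$,
\[
\frac{1}{r_{k+1}} = A(s) + \lambda(s)\,\frac{1}{r_k}, \qquad \lambda(s) := \frac{(2s-1)(3s-2)}{2(1+s)} \quad (k \geq 2),
\]
and by (\ref{rinf''}) the fixed point of this map is precisely $\frac{1}{r_\infty}$. Subtracting the fixed-point equation from the recursion gives the clean relation
\[
\frac{1}{r_{k+1}} - \frac{1}{r_\infty} = \lambda(s)\left(\frac{1}{r_k} - \frac{1}{r_\infty}\right) \qquad \text{for all } k \geq 2,
\]
which converts the lemma into a statement about the sign of $\frac{1}{r_k} - \frac{1}{r_\infty}$ being propagated by a linear iteration.

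For $s \in (9/10, 1]$ each of $2s-1$, $3s-2$ and $1+s$ is positive, so $\lambda(s) > 0$ (indeed $\lambda(s) < 1/4$). Consequently the map $x \mapsto A(s) + \lambda(s)\,x$ preserves the strict inequality $x > \frac{1}{r_\infty}$, and induction reduces the proof to the two base cases $\frac{1}{r_1} > \frac{1}{r_\infty}$ and $\frac{1}{r_2} > \frac{1}{r_\infty}$. For $k=1$ I would substitute the explicit formula (\ref{r}) for $\frac{1}{r_1}$ and the explicit formula (\ref{rinf'}) for $\frac{1}{r_\infty}$ and clear the positive denominators $6$ and $2(1+s) - (3s-2)(2s-1)$; this reduces the question to a polynomial inequality in $s$ of low degree, to be verified on $(9/10, 1]$. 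For $k=2$, by (\ref{B7}) with $k=1$ and positivity of $2s-1$, the desired inequality $\frac{1}{r_2} > \frac{1}{r_\infty}$ is equivalent to $\frac{1}{\tilde r_1} > \frac{1}{\tilde r_\infty}$, where $\frac{1}{\tilde r_\infty}$ is obtained from $\frac{1}{r_\infty}$ through (\ref{B6}); inserting the explicit expression (\ref{rtilde}) for $\frac{1}{\tilde r_1}$ converts this, too, into an elementary polynomial inequality in $s$ on $(9/10, 1]$.

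The main obstacle I expect is the verification of these two base-case polynomial inequalities on the full interval $(9/10, 1]$ and the corresponding check that the threshold $s = 9/10$ is genuinely what is needed (rather than being an artefact of a crude bound). Beyond that the argument is automatic: the induction step is immediate once the affine structure with $\lambda(s) > 0$ has been identified, so the whole proof is driven by making the hypothesis $s > 9/10$ just strong enough to start the induction.
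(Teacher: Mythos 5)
Your proposal is correct and follows essentially the same route as the paper: induction on $k$ using the affine recursion (\ref{B8}), whose coefficient $(2s-1)(3s-2)/(2(1+s))$ is positive and whose fixed point is $1/r_\infty$ by (\ref{rinf''}), with the two base cases $k=1$ and $k=2$ reduced to elementary polynomial inequalities in $s$ (the $k=2$ case via (\ref{B7}) and (\ref{rtilde}), exactly as in the paper). The only cosmetic slip is the parenthetical claim $\lambda(s)<\frac14$, which holds only with equality at $s=1$; the argument uses nothing beyond $\lambda(s)>0$, so this is immaterial.
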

\begin{proof}
(by induction) 
%Using the definition of $r_1$ and $r_{\infty}$ we obtain by elementary calculations
%\begin{align*}
%\frac{1}{r_1} > \frac{1}{r_{\infty}} \quad \Longleftrightarrow \quad&(\frac{7}{2}+s-4s^2)\big(2(1+s)-(3s-2)(2s-1) \big) \\
%&> 6 \big((\frac{7}{3}-3s+\frac{2}{3}s^2)(1+s) + (2s-1)(\frac{19}{6}-\frac{25}{6}s+s^2) \big) \\
%\Longleftrightarrow  \quad&s(24s-58s^2+65s-\frac{55}{2}) + 5 > 0 \, .
%\end{align*}
%It is not difficult to show that this holds for $1 > s > \frac{9}{10}$ . 
The case $k=1$ can be established by elementary calculations using the definition of $r_1$ and $r_{\infty}$ by (\ref{r}) and (\ref{rinf'}) , or alternatively by Lemma \ref{LemmaA.4}.
By (\ref{rinf''}),(\ref{B7}) and (\ref{rtilde}) we obtain 
\begin{align*}
\frac{1}{r_2} > \frac{1}{r_{\infty}} & \Longleftrightarrow \frac{3}{2}-\frac{1}{3}s-s^2 > \frac{19}{6} - \frac{25}{6}s+s^2+(3s-2) \frac{1}{r_{\infty}} \, .
\end{align*}
Inserting $r_{\infty}$ by (\ref{rinf'}) an elementary calculation yields that this is fulfilled at least for $1 \ge s > \frac{9}{10}$ .
Next we prove$\frac{1}{r_{k+1}} > \frac{1}{r_{\infty}}$, if $\frac{1}{r_k} > \frac{1}{r_{\infty}}$ for some $k \ge 2$. This assumption implies that the right hand side (\ref{B8}) strictly decreases for $s \ge \frac{2}{3}$ , if we replace $\frac{1}{r_k}$ by $\frac{1}{r_\infty}$ , so that it equals  $\frac{1}{r_\infty}$ by (\ref{rinf''}), thus $\frac{1}{r_{k+1}} > \frac{1}{r_{\infty}}$ .
\end{proof}

\begin{lemma}
\label{LemmaA.4}
If $1 \ge s > \frac{9}{10}$ we obtain $\frac{1}{r_{k+1}} \le \frac{1}{r_k}$ $\forall \, k \in {\mathbb N}$ .
\end{lemma}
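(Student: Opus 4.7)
The plan is to exploit the fact that, for $k \ge 2$, the recursion (\ref{B8}) expresses $1/r_{k+1}$ as an affine function of $1/r_k$ whose unique fixed point is $1/r_\infty$, and then combine this with Lemma \ref{LemmaA.3}. The base case $k=1$ is not covered by (\ref{B8}), because $1/\ti r_1$ is prescribed directly by (\ref{rtilde}) rather than coming from (\ref{B6}); it must be handled separately.

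For the inductive step, rewrite (\ref{B8}) as $1/r_{k+1} = f(1/r_k)$ with
$$ f(x) = \half\Big(\tfrac{7}{3}-3s+\tfrac{2}{3}s^2\Big) + \frac{(2s-1)(\tfrac{19}{6}-\tfrac{25}{6}s+s^2)}{2(1+s)} + \lambda(s)\, x, \qquad \lambda(s) := \frac{(2s-1)(3s-2)}{2(1+s)}. $$
The identity (\ref{rinf''}) says precisely that $f(1/r_\infty) = 1/r_\infty$. For $s \in (9/10, 1]$ one checks that $2s-1 > 0$, $3s-2 > 0$, and that $\lambda$ is monotonically increasing with $\lambda(1) = 1/4$, hence $0 < \lambda(s) \le 1/4 < 1$. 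Thus $f$ is a strictly contractive affine map with positive slope. Since Lemma \ref{LemmaA.3} gives $1/r_k > 1/r_\infty$, this yields
$$ \tfrac{1}{r_{k+1}} - \tfrac{1}{r_\infty} = f\!\left(\tfrac{1}{r_k}\right) - f\!\left(\tfrac{1}{r_\infty}\right) = \lambda(s)\left(\tfrac{1}{r_k} - \tfrac{1}{r_\infty}\right) < \tfrac{1}{r_k} - \tfrac{1}{r_\infty}, $$
which rearranges to $1/r_{k+1} < 1/r_k$ for all $k \ge 2$.

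For the base case $k=1$ I substitute the explicit value $1/\ti r_1 = (\tfrac{3}{2}-\tfrac{s}{3}-s^2)/(1+s)\,+$ from (\ref{rtilde}) into (\ref{B7}) to obtain a closed form for $1/r_2$, and compare with $1/r_1 = (\tfrac{7}{2}+s-4s^2)/6\,+$ from (\ref{r}). After clearing denominators, $1/r_2 \le 1/r_1$ reduces to an explicit polynomial inequality in $s$ which can be verified by elementary algebra on $(9/10, 1]$; as a sanity check, at $s=1$ one finds $1/r_1 = 1/12$ and $1/r_2 = 1/24$, so the inequality holds with room to spare. The main obstacle is precisely this base-case check: it is the reason the lemma assumes the stronger range $s > 9/10$ rather than the Theorem~\ref{Theorem5} range $s > s_0$, and some care with the tiny $+$-losses (each $+$ standing for an arbitrarily small $\epsilon$) is needed to preserve strict inequalities along the iteration. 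Once the base case is established, the contraction argument of the previous paragraph finishes the proof and, in combination with Lemma \ref{LemmaA.3}, gives the convergence $1/r_k \to 1/r_\infty$.
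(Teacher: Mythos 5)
Your proposal is correct and takes essentially the same route as the paper: for $k\ge 2$ the paper uses (\ref{B8}) together with Lemma \ref{LemmaA.3}, phrasing your affine-contraction computation as the equivalence of $\frac{1}{r_{k+1}}\le\frac{1}{r_k}$ with $\frac{1}{r_k}\ge\frac{1}{r_\infty}$, and the base case $k=1$ is the same explicit substitution, which reduces to $-3s^2+\frac{7}{2}s-1\le 0$, i.e. $(2s-1)(3s-2)\ge 0$. Only your side remark misattributes the hypothesis $s>\frac{9}{10}$ to the base case, which in fact holds for all $s\ge\frac{2}{3}$; that restriction is inherited from Lemma \ref{LemmaA.3}.
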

\begin{proof}
In the case $k=1$ we use (\ref{B7}) and the definition of $r_1$ and $\ti{r}_1$ by (\ref{r}) and (\ref{rtilde}). By easy calculations the inequality $\frac{1}{r_2} \le \frac{1}{r_1}$ reduces to $-3s^2+ \frac{7}{2}s-1 \le 0$ , which holds for $s \ge \frac{2}{3}$ .
In the case $k \ge 2$ we use Lemma \ref{LemmaA.3}. By (\ref{B8}) we have to prove
$$
\frac{\frac{7}{3}-3s+\frac{2}{3}s^2 + (2s-1)\frac{\frac{19}{6}-\frac{25}{6}s + s^2 + (3s-2)\frac{1}{r_k}}{1+s}}{2} \le \frac{1}{r_k} \, , $$
which can be proven to be equivalent to $\frac{1}{r_k} \ge \frac{1}{r_\infty}$ similarly to the equivalence of (\ref{rinf''}) and (\ref{rinf'}).
\end{proof}

These lemmata imply that $\frac{1}{r_k}$ is a monotonically decreasing bounded sequence, so that we now have proven that $\frac{1}{r_k} \to \frac{1}{r_{\infty}}$ as $k \to \infty$ .

We recall the following definitions:
\begin{align*}
\frac{1}{q_{k}} & = \frac{1}{\ti{r}_{k}} + \half - \frac{1-s}{3} & (\ref{qk'}) \\
\frac{1}{r_{k+1}} & = \frac{\frac{5}{2}-3s+(2s-1) \frac{1}{q_k}}{2} & (\ref{r(k+1)''}) \\
\frac{1}{\ti{q}_k} & = \frac{1}{r_{k+1}} + \half - \frac{1-s}{3} & (\ref{B5}) \\
\frac{1}{\ti{r}_{k+1}} & = \frac{\frac{7}{2}-4s+(3s-2) \frac{1}{\ti{q}_k}}{1+s} & (\ref{B4}) 
\end{align*}

Consequently we also obtain  $\frac{1}{\ti{q}_k} \to \frac{1}{\ti{q}_\infty}$ , $\frac{1}{\ti{r}_k} \to \frac{1}{\ti{r}_{\infty}}$ , $\frac{1}{q_k} \to \frac{1}{q_{\infty}}$ as $k \to \infty$  and these sequences are monotonically decreasing.

We are especially interested in the value of $q_{\infty}$, which we now determine by use of (\ref{qk'}),(\ref{r(k+1)''}),(\ref{B5}) and(\ref{B4}). We obtain
\begin{align*}
\frac{1}{q_{k+1}} & = \frac{\frac{7}{2}-4s+(3s-2)(\frac{1}{r_{k+1}} + \half - \frac{1-s}{3})}{1+s} + \half - \frac{1-s}{3} \\
& = \frac{\frac{7}{2}-4s+(3s-2)(\frac{5}{4}-\frac{3}{2}s+(2s-1)\frac{1}{2q_k}+\half-\frac{1-s}{3})}{1+s} + \half - \frac{1-s}{3} \, ,
\end{align*}
which implies
\begin{align*}
\frac{1}{q_{\infty}} 
& = \frac{\frac{7}{2}-4s+(3s-2)(\frac{5}{4}-\frac{3}{2}s+(2s-1)\frac{1}{2q_{\infty}}+\half-\frac{1-s}{3})}{1+s} + \half - \frac{1-s}{3} 
\end{align*}
and by an elementary calculation finally
\begin{equation}
\label{qinf}
\frac{1}{q_{\infty}} = \frac{\frac{5}{6}+\frac{37}{12}s-\frac{19}{6}s^2}{s(\frac{9}{2}-3s)} \, .
\end{equation}

\begin{lemma}
\label{LemmaA.5} For $ 1 \ge s \ge \frac{3}{4}$ the following estimates hold:
$ \half > \frac{1}{\ti{r}_k} > \frac{3}{2}(1-s)$ and $\half > \frac{1}{r_k} > 1-s \quad\forall k \in {\mathbb N}$ .
\end{lemma}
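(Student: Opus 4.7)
I argue by induction on $k$, verifying the base case directly from the explicit formulas (\ref{r}) and (\ref{rtilde}) and then propagating the bounds through the recurrences.

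For the base case, four elementary quadratic inequalities must be checked. The bound $\frac{1}{r_1} > 1-s$ becomes $4s^2 - 7s + \frac{5}{2} < 0$ (roots $\half$ and $\frac{5}{4}$) and $\frac{1}{r_1} < \half$ becomes $4s^2 - s - \half > 0$ (positive root $\half$). After clearing the factor $1+s$, the bound $\frac{1}{\tilde r_1} > \frac{3}{2}(1-s)$ collapses to $\frac{s^2}{2} > \frac{s}{3}$, and $\frac{1}{\tilde r_1} < \half$ to $s^2 + \frac{5}{6}s - 1 > 0$ (positive root $\frac{2}{3}$). All four hold on $[\frac{3}{4},1]$.

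For the inductive step, assume $\frac{3}{2}(1-s) < \frac{1}{\tilde r_k} < \half$. By (\ref{B7}),
\[
\frac{1}{r_{k+1}} \;=\; \frac{7 - 9s + 2s^2}{6} \;+\; \frac{2s-1}{2}\cdot\frac{1}{\tilde r_k},
\]
which is affine in $\frac{1}{\tilde r_k}$ and, since $s>\half$, strictly increasing; it therefore suffices to evaluate at the two endpoints. At $\frac{1}{\tilde r_k}=\half$ the right side equals $\frac{11 - 12s + 4s^2}{12}$, and requiring this to be $\le\half$ gives $4s^2 - 12s + 5\le 0$ (roots $\half$ and $\frac{5}{2}$). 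At $\frac{1}{\tilde r_k}=\frac{3}{2}(1-s)$ the right side equals $\frac{5 + 9s - 14s^2}{12}$, and requiring this to be $\ge 1-s$ gives $(2s-1)(s-1)\le 0$. Both hold on $[\frac{3}{4},1]$, so $1-s < \frac{1}{r_{k+1}} < \half$.

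An analogous argument handles $\tilde r_{k+1}$. From (\ref{B6}),
\[
\frac{1}{\tilde r_{k+1}} \;=\; \frac{\tfrac{19}{6} - \tfrac{25}{6}s + s^2}{1+s} \;+\; \frac{3s-2}{1+s}\cdot\frac{1}{r_{k+1}},
\]
which is affine and (for $s>\frac{2}{3}$) strictly increasing in $\frac{1}{r_{k+1}}$. At $\frac{1}{r_{k+1}}=\half$ the bound $\frac{1}{\tilde r_{k+1}}\le\half$ reduces to $6s^2 - 19s + 10 \le 0$ (roots $\frac{2}{3}$ and $\frac{5}{2}$), and at $\frac{1}{r_{k+1}}=1-s$ the bound $\frac{1}{\tilde r_{k+1}}\ge \frac{3}{2}(1-s)$ reduces to $(3s-2)(s-1)\le 0$. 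Both hold on $[\frac{3}{4},1]$, which closes the induction. The argument is entirely algebraic, so no analytic obstacle arises; the only structural point to keep in mind is that the recurrence does not close on a single sequence but alternates between $r$ and $\tilde r$, so the two closed-form linear maps (one for each half-step) must be verified separately at both endpoints.
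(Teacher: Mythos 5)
Your proof is correct, and all the endpoint evaluations check out (I verified $\frac{11-12s+4s^2}{12}\le\half\Leftrightarrow 4s^2-12s+5\le 0$, $\frac{5+9s-14s^2}{12}\ge 1-s\Leftrightarrow(2s-1)(s-1)\le 0$, $6s^2-19s+10\le 0$, and $(3s-2)(s-1)\le 0$ on $[\frac34,1]$, as well as the four base-case quadratics). However, your route is genuinely different from the paper's. The paper does not induct on the bounds directly: it invokes the monotone decrease of the sequences $\{\frac{1}{r_k}\}$, $\{\frac{1}{\tilde r_k}\}$ (established in Lemmas \ref{LemmaA.3}--\ref{LemmaA.4} and the surrounding discussion), so that the upper bounds $\half$ need only be checked at $k=1$ and the lower bounds need only be checked for the explicit limits $\frac{1}{r_\infty}$ and $\frac{1}{\tilde r_\infty}$, using $\frac{1}{q_\infty}>\half$ from (\ref{qinf}). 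Your two-half-step induction, exploiting that each of the affine maps $\frac{1}{\tilde r_k}\mapsto\frac{1}{r_{k+1}}$ and $\frac{1}{r_{k+1}}\mapsto\frac{1}{\tilde r_{k+1}}$ is increasing (coefficients $\frac{2s-1}{2}>0$ and $\frac{3s-2}{1+s}>0$), is self-contained and buys something real: the paper's monotonicity lemmas are only proved for $s>\frac{9}{10}$, so its proof of Lemma \ref{LemmaA.5} as written does not literally cover the full stated range $\frac34\le s\le 1$, whereas your argument does (this costs nothing in the application, where $s>s_0\approx 0.906$, but it makes the lemma as stated actually true by your proof). The trade-off is that the paper's approach simultaneously delivers the convergence $\frac{1}{r_k}\to\frac{1}{r_\infty}$ needed elsewhere, which your argument does not address and was not asked to.
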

\begin{proof}
Because the sequences are monotonically decreasing we only have to prove $\frac{1}{r_1} < \half$ , $\frac{1}{\ti{r}_1} < \half$ , $\frac{1}{r_{\infty}} > 1-s$ and$\frac{1}{\ti{r}_{\infty}} > \frac{3}{2}(1-s)$ . The first two estimates are easily checked for $s \ge \frac{3}{4}$ by the definition of $\frac{1}{r_1}$ and $\frac{1}{\ti{r}_1}$ in (\ref{r}) and (\ref{rtilde}). By (\ref{qinf}) we have $\frac{1}{q_{\infty}} > \half$ and obtain by (\ref{r(k+1)''}) the estimate
$$ \frac{1}{r_{\infty}} = \frac{\frac{5}{2}-3s+(2s-1)\frac{1}{q_{\infty}}}{2} > \frac{\frac{5}{2}-3s+(2s-1)\half}{2} = 1-s \, . $$
Using the last estimate , (\ref{B4}) and (\ref{B5}) we obtain
\begin{align*}
\frac{1}{\ti{r}_{\infty}} & = \frac{\frac{7}{2}-4s+(3s-2)\frac{1}{\ti{q}_{\infty}}}{1+s} = \frac{\frac{7}{2}-4s+(3s-2)\frac{1}{r_{\infty}} + (3s-2)(\half - \frac{1-s}{3})}{1+s} \\
& = \frac{\frac{19}{6} - \frac{25}{6}s + s^2 + (3s-2)\frac{1}{r_{\infty}}}{1+s} > \frac{\frac{19}{6}-\frac{25}{6}s+s^2 + (3s-2)(1-s)}{1+s} \, .
\end{align*}
One easily checks that this term is bounded below by $\frac{3}{2}(1-s)$ as required, if $ (s-\frac{5}{6})^2 - \frac{1}{36} \le 0$ ,  which holds for $\frac{2}{3} \le s \le 1$ .
\end{proof}

\end{document}